\newtheorem{Theorem}{Theorem}
\newtheorem{Conjecture}[Theorem]{Conjecture}
\newtheorem{Corollary}[Theorem]{Corollary}
\newtheorem{Lemma}[Theorem]{Lemma}
\newtheorem{Definition}[Theorem]{Definition}
\DeclareMathOperator*{\E}{\mathbb{E}}
\newcommand{\Var}{\operatorname{Var}}
\newcommand{\Po}{\operatorname{Po}}
\newcommand{\sgn}{\operatorname{sgn}}
\numberwithin{equation}{section}
\title{On the triple correlations of fractional parts of $n^2\alpha$} 
\definecolor{RED}{rgb}{1,0,0}\definecolor{BLUE}{rgb}{0,0,1}
\providecommand{\DIFaddbegin}{} 
\providecommand{\DIFaddend}{}
\begin{document}

\author{Niclas Technau}
\address{A0048 - 480 Lincoln Dr.,
Madison, WI 53706-1325, USA}
\email{technau@wisc.edu}

\author{Aled Walker}
\address{Trinity College, Cambridge CB2 1TQ, United Kingdom}
\email{aw530@cam.ac.uk}

\thanks{While the work towards this paper was being carried out, NT was supported by the European Research Council (ERC) under the European Union's Horizon 2020 research and innovation programme 
(Grant agreement No. 786758).
AW is supported by a Junior Research Fellowship from Trinity College Cambridge, and was also supported by a Post-doctoral Fellowship 
at the Centre de Recherches Math\'{e}matiques. } 

\begin{abstract}
For fixed $\alpha \in [0,1]$, consider the set $S_{\alpha,N}$ of dilated squares 
$\alpha, 4\alpha, 9\alpha, \dots, N^2\alpha \, $ modulo $1$. Rudnick and Sarnak conjectured that for Lebesgue almost all such $\alpha$ the gap-distribution 
of $S_{\alpha,N}$ is consistent with the Poisson model
(in the limit as $N$ tends to infinity). 
In this paper we prove a new estimate for the triple correlations associated to this problem, establishing an asymptotic expression for the third moment of the number of elements of $S_{\alpha,N}$ in a random interval of length $L/N$, provided that $L > N^{1/4+\varepsilon}$. The threshold of $1/4$ is substantially smaller than the threshold of $1/2$ (which is the threshold that would be given by a na\"{i}ve discrepancy estimate). 

Unlike the theory of pair correlations, 
rather little is known about triple correlations 
of the dilations $(\alpha a_n \, \text{mod } 1)_{n=1}^{\infty} $ 
for a non-lacunary sequence $(a_n)_{n=1}^{\infty} $ of increasing integers. This is partially due to the fact that second moment of the triple correlation function is difficult to control, and thus standard techniques involving variance bounds are not applicable. We circumvent this impasse by
using an argument inspired by works of Rudnick--Sarnak--Zaharescu and Heath-Brown,
which connects the triple correlation function
to some modular counting problems. 

In an appendix we comment on the relationship between 
discrepancy and correlation functions, answering a question of Steinerberger.
\end{abstract}
\maketitle

\global\long\def\Rk{R_{k}}%
\global\long\def\Rkd{R_{k}'}%
\global\long\def\R3{R_{3}}%
\global\long\def\R3d{R_{3}'}%
\global\long\def\R2{R_{2}}%

 \section{Introduction}
 Let $(a_n)_{n=1}^\infty$ be a strictly increasing sequence of positive integers. This paper  is concerned with the distribution of $(\alpha a_n \, \text{mod } 1)_{n=1}^\infty$ in short intervals, for a generic dilate $\alpha \in [0,1]$, with particular focus on the case when $a_n = n^2$.

We begin with the familiar notion of the discrepancy $D_N$, defined to be 
\begin{equation}
\label{eq: discrepancy}
D_N = D_N((\alpha a_n \, \text{mod } 1)_{n=1}^\infty) = \sup_{0<a<b<1}  
\Big\vert \frac{\vert \{n\leq N: \alpha a_n \, \text{mod }1 \in (a,b)\} \vert}{N} 
- (b-a) \Big\vert.
\end{equation}
It is an old result of Weyl, contained in his 1916 paper \cite{W16}, 
that 
$ D_N((\alpha n^2 \, \text{mod } 1)_{n=1}^\infty) = o_{\alpha}(1)$ 
as $N\rightarrow \infty$ for any irrational $\alpha$. 
The sequence $(\alpha n^2 \, \text{mod } 1)_{n=1}^\infty$ 
is then said to be \emph{equidistributed} modulo $1$. 
One way of viewing Weyl's result is as demonstrating 
a pseudorandomness property for the sequence 
$(\alpha n^2 \, \text{mod } 1)_{n=1}^\infty$. 
Indeed, in the random model in which 
$( \alpha n^2 \, \text{mod } 1)_{n=1}^N$ is replaced by $N$ 
independent random variables  $X_1,\dots,X_N$ 
which are uniformly distributed on $[0,1)$ one has\footnote{This is by the law of the iterated logarithm. See Khintchine \cite{K24},
as well as Chung \cite[Theorem $2^*$]{Ch49} and Kuipers-Niederreiter \cite[p. 98]{KN12}.}  
$$
\limsup_{N\rightarrow \infty} 
\frac{ \sqrt{2N} D_N((X_n)_{n=1}^\infty)}{\sqrt{\log \log N}} = 1
$$ almost surely. In particular 
$\mathbb{E}D_N((X_n)_{n=1}^\infty) = o(1)$ as $N\rightarrow \infty$. 

One might wonder, considering the strong quantitative decay enjoyed in the random model, whether Weyl's result admits such a quantitative refinement. Unfortunately, as is well known, if $\alpha$ is well-approximated by rational numbers with small denominators then the discrepancy $D_N((\alpha n^2 \, \text{mod } 1)_{n=1}^\infty)$ can tend to zero extremely slowly. However, for a generic $\alpha$ the situation is much improved, and in fact for \emph{any} strictly increasing sequence of positive integers $(a_n)_{n=1}^\infty$ one has the classical result of Erd\H{o}s-Koksma \cite{EK49}, which implies that \[D_N((\alpha a_n \, \text{mod } 1)_{n=1}^\infty) = O_{\varepsilon}(N^{-\frac{1}{2} + \varepsilon})\] for Lebesgue almost all $\alpha \in [0,1]$. This trivially implies that, for almost all $\alpha \in [0,1]$, if $I \subset \mathbb{R}/\mathbb{Z}$ is a fixed interval of length $\vert I\vert \geqslant N^{-\frac{1}{2} + \varepsilon}$ then 
\begin{equation}
\label{eq discrepancy estimate}
\vert \{ n \leqslant N: \alpha n^2 \, \text{mod } 1 \in I\}\vert = (1 +o_{\varepsilon}(1)) N \vert I\vert.
\end{equation}
So, at least for a generic $\alpha$, pseudorandomness is enjoyed down to the scale $N^{-\frac{1}{2} + \varepsilon}$. A result of Aistleitner and Larcher \cite[Cor. 1]{AL16} 
implies that the exponent 
$1/2$ is optimal for the metric discrepancy problem: for generic $\alpha$ and for each $\varepsilon >0$
there are infinitely many $N$ such that 
$ D_N((\alpha a_n \, \text{mod } 1)_{n=1}^\infty) > N^{-1/2 -\varepsilon}$, provided $a_n = P(n)$ 
for some polynomial $P$, of degree at least two,
with integer coefficients. 

By allowing the interval $I$ to vary, one can develop a related notion of pseudorandomness at scales that are smaller than $N^{-1/2}$, one which concerns the `clustering' of the points $\alpha n^2 \, \text{mod } 1$. To introduce this notion, which will be the main focus of the paper, we let $Y$ be a random variable that is uniformly distributed on $[0,1)$, and for a natural number $N$ and a parameter $L$ in the range $0 < L \leqslant N$ we let $W_{\alpha, L,N}$ be the random variable 
\begin{equation}
\label{eq: W definition}
W_{\alpha, L,N}:=\vert \{n \leqslant N :\alpha n^2\in [Y,Y+L/N] \, \text{mod } 1\}\vert.
\end{equation} 
It is easy to see that $\mathbb{E} W_{\alpha,L,N} = L$. But how should one expect $W_{\alpha,L,N}$ to be distributed in the limit $N \rightarrow \infty$ (for a generic dilate $\alpha$)? Consider the same random model as before, in which $( \alpha n^2 \, \text{mod } 1)_{n=1}^N$ is replaced by $N$ independent random variables  $X_1,\dots,X_N$ which are uniformly distributed on $[0,1)$. Then, if $L$ is constant as $N \rightarrow \infty$, letting 
\begin{equation}
\label{random model}
Z_{L,N} :=  \vert\{n \leqslant N : X_n \in [Y,Y+L/N] \, \text{mod } 1 \}\vert
\end{equation} one may calculate that \[ Z_{L,N} \xrightarrow{dist} \Po(L) \] as $N \rightarrow \infty$, where $\Po(L)$ is a Poisson-distributed random variable with parameter $L$.

Having described this random model, we can now state the following remarkable conjecture:

\begin{Conjecture}[Rudnick-Sarnak\footnote{These authors refer to the `distribution of the spacings between the elements', rather than mentioning the random variables $W_{\alpha,L,N}$ directly, but these are equivalent notions. For more on this other perspective, see the introduction to \cite{RSZ01}.} \cite{RS98}]
\label{Conjecture RS}
For almost all $\alpha \in [0,1]$, for all fixed $L >0$, \[ W_{\alpha,L,N} \xrightarrow{dist} \Po(L)\] as $N \rightarrow \infty$. 
\end{Conjecture}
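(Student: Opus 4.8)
The plan is to prove this by the method of moments. The Poisson law $\Po(L)$ is determined by its moments (they grow slowly enough that, say, Carleman's condition holds), so it suffices to show that for each fixed $k\geqslant 1$ the factorial moments converge, $\mathbb{E}\,(W_{\alpha,L,N})_k\to L^k$ as $N\to\infty$, where $(x)_k:=x(x-1)\cdots(x-k+1)$ and $\mathbb{E}\,(\Po(L))_k=L^k$. To obtain a single full-measure set of $\alpha$ that works for all $L$ at once, I would first prove the convergence for each fixed $k$ and each fixed \emph{rational} $L$ --- a countable family of conditions, whose associated conull sets may be intersected --- and then pass to arbitrary real $L$ by a sandwiching argument, using that $W_{\alpha,L,N}$ is nondecreasing in $L$.

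The next step is to unfold the factorial moment. Expanding $(W_{\alpha,L,N})_k$ over ordered $k$-tuples of \emph{distinct} integers and averaging over the uniform shift $Y$ gives
\[
\mathbb{E}\,(W_{\alpha,L,N})_k = \sum_{\substack{1\leqslant n_1,\dots,n_k\leqslant N\\ n_1,\dots,n_k\ \mathrm{distinct}}} \mathrm{vol}\Big\{ y\in[0,1): \alpha n_i^2\in[y,y+L/N]\ (\mathrm{mod}\ 1)\ \forall\, i\Big\},
\]
and the volume on the right is a fixed piecewise-linear function of the pairwise distances $\lVert\alpha(n_i^2-n_j^2)\rVert$, supported on the region where each such distance is at most $L/N$. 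Thus $\mathbb{E}\,(W_{\alpha,L,N})_k$ is precisely a smoothed $k$-level correlation count, which we denote $R_k(\alpha,L,N)$, and one is reduced to showing: for almost all $\alpha$, $R_k(\alpha,L,N)\to L^k$ for every fixed $k$ and $L$.

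The hard part will be establishing this for $k\geqslant 3$. For $k=2$ the analogous statement is known for the sequence $n^2$ (Rudnick--Sarnak, and Heath-Brown): via the variance method one shows $\int_0^1 \lvert R_2(\alpha,L,N)-L^2\rvert^2\,d\alpha\to 0$ rapidly enough along a subsequence $N_j$ to invoke Borel--Cantelli, and then controls the ranges between consecutive $N_j$. For $k\geqslant 3$ this collapses: $\int_0^1 \lvert R_k(\alpha,L,N)\rvert^2\,d\alpha$ is dominated by degenerate tuples and is too large at fixed scale $L$ --- exactly the impasse flagged in the introduction. To bypass it I would follow this paper's strategy: discard variance bounds altogether and instead translate the $k$-level correlation into a counting problem for solutions of polynomial congruences, in the spirit of Rudnick--Sarnak--Zaharescu and Heath-Brown, and estimate that count directly.

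The true obstacle is that this substitute is at present effective only for $k=3$, and even there it yields the matching of the third moment of $W_{\alpha,L,N}$ with that of $\Po(L)$ only in the range $L>N^{1/4+\varepsilon}$ rather than for fixed $L$; pushing the admissible window down to $L=O(1)$, and handling $k\geqslant 4$ by any method at all, are precisely where genuinely new input is needed. Consequently the conjecture remains open, and the contribution of the present paper is to establish the indicated partial case: the triple-correlation asymptotic for $k=3$ in the range $L>N^{1/4+\varepsilon}$.
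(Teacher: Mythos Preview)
You have correctly identified the essential point: the statement in question is a \emph{conjecture}, not a theorem, and the paper does not prove it. There is therefore no proof in the paper to compare your proposal against. Your write-up is not so much a proof attempt as an accurate diagnosis of why the conjecture is hard and what partial progress the paper makes, and on that level it is entirely sound.

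Your outline of the method of moments is the natural line of attack, and your identification of the obstruction for $k\geqslant 3$ matches precisely what the paper discusses (see Appendix~C, where the authors show that the $L^2$-variance approach fails for the triple correlation unless $L\gg N^{1/3-\varepsilon}$). You also correctly summarise the paper's actual contribution: Theorem~\ref{Theorem TW} establishes the third-moment asymptotic only in the range $L>N^{1/4+\varepsilon}$, via the congruence-counting approach of Rudnick--Sarnak--Zaharescu and Heath-Brown, and this falls well short of the fixed-$L$ regime required for Conjecture~\ref{Conjecture RS}.

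One minor remark: your sandwiching argument for passing from rational $L$ to all $L$ is fine in principle, but note that Conjecture~\ref{Conjecture RS} as stated already fixes $L$ before letting $N\to\infty$, so the countable-intersection step over rational $L$ (followed by monotonicity) is indeed the standard way to obtain a single conull set. This is a routine point and not where the difficulty lies.
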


\noindent If true, this conjecture would represent a strong local notion of pseudorandomness for the sequence $(\alpha n^2 \, \text{mod } 1)_{n=1}^\infty$, at least for a generic $\alpha$. In fact, a further conjecture \cite[p. 38]{RSZ01} posits more information about the full-measure set of suitable dilates $\alpha$. To state this conjecture, we recall that $\alpha$ is of type $\omega$ if there are only finitely many pairs $(a,q)$ with $\vert \alpha - a/q\vert < q^{-\omega}$.
\begin{Conjecture}[Rudnick--Sarnak--Zaharescu]
\label{Conjecture RSZ}
If $\alpha$ is of type $2 + \varepsilon$ for all $\varepsilon >0$, and the convergents $a/q$ to $\alpha$ satisfy \[\lim\limits_{ q \rightarrow \infty} \log \widetilde{q} / \log q = 1,\] where $\widetilde{q}$ is the square-free part of $q$, then for all fixed $L>0$, as $N \rightarrow \infty$, one has\[ W_{\alpha,L,N} \xrightarrow{dist} \Po(L).\]
\end{Conjecture}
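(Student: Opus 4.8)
The statement is a conjecture, so what follows is a program rather than a complete argument; the natural line of attack is the method of moments, reducing the distributional claim to quantitative control of all the correlation functions and then attempting to establish that control from the Diophantine hypotheses. Since $\Po(L)$ is determined by its moments, it suffices to show that for every fixed $k \geqslant 1$ the $k$-th factorial moment of $W_{\alpha,L,N}$ tends to $L^k$ as $N \to \infty$. Unpacking \eqref{eq: W definition}, this factorial moment equals, up to a negligible correction,
\[
R_k(\alpha,N,L) := \frac{1}{N}\,\Bigl|\bigl\{(n_1,\dots,n_k)\in\{1,\dots,N\}^k\ \text{distinct}:\ \alpha(n_i^2-n_j^2)\bmod 1\in\bigl(-\tfrac{L}{N},\tfrac{L}{N}\bigr)\ \text{for all}\ i,j\bigr\}\Bigr|,
\]
the $k$-level correlation of the sequence. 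Thus Conjecture~\ref{Conjecture RSZ} is equivalent to $R_k(\alpha,N,L)\to L^k$ for every $k$ and every fixed $L$ under the stated hypotheses. The case $k=2$ is essentially the theorem of Rudnick--Sarnak for a.e.\ $\alpha$, and was established by Rudnick--Sarnak--Zaharescu under the present Diophantine conditions along a subsequence of $N$; the new content is the higher $k$ together with the passage from a subsequence to all $N$.

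Next I would detect each constraint $\alpha(n_i^2-n_j^2)\bmod 1\in(-L/N,L/N)$ using smooth majorant and minorant functions on $\mathbb{R}/\mathbb{Z}$ together with Poisson summation, writing $R_k$ as a sum over dual integer frequency vectors $(m_{ij})_{i<j}$. The diagonal term, in which every $m_{ij}=0$, produces the main term $\sim L^k$ by Weyl equidistribution of $(\alpha n^2\bmod 1)$. Everything then reduces to showing that the off-diagonal sum---over nonzero frequency vectors, weighted by Fourier coefficients of size roughly $N/(L\,|m|)$ and effectively truncated at $|m|\ll N/L$---is $o(1)$. These off-diagonal terms are products of quadratic exponential sums $\sum_{n\leqslant N}e(\beta n^2)$, with $\beta$ running through integer combinations of the $m_{ij}$ multiplied by $\alpha$.

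The Diophantine hypotheses enter in bounding these exponential sums. Standard Weyl and van der Corput estimates bound $|\sum_{n\leqslant N}e(\beta n^2)|$ by $N$ with a power saving, unless $\beta$ lies very close to a rational $a/q$ with small $q$, in which case the bound degrades to roughly $N/\sqrt{q}$ times a complete Gauss sum, and that Gauss sum is itself abnormally large precisely when $q$ has a large \emph{square} part. This is exactly why the hypothesis $\log\widetilde q/\log q\to 1$ appears: it excludes the resonant scenario in which $\alpha$ is abnormally close to fractions $a/q$ with $q$ having a big square divisor, a scenario that genuinely destroys Poissonian behaviour. The type-$(2+\varepsilon)$ hypothesis simultaneously controls how closely $\alpha$ can approximate rationals of a given denominator, hence the size and frequency of the near-resonances that must be summed over. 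Assembling all of these estimates over every nonzero frequency vector is the crux of the whole problem.

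The genuine difficulty---and the reason the statement is still a conjecture---is exactly this assembly once $k\geqslant 3$: the frequency vectors are numerous, the exponential-sum bounds are lossy on the near-resonant set, and, crucially, the second-moment (variance) control of $R_k$ that underlies the almost-everywhere treatment of the pair correlation is unavailable here, so one cannot average the estimate over $N$ to discard the bad scales. One is therefore forced to bound every $R_k(\alpha,N,L)$ pointwise and uniformly in $N$, which seems to require substantially sharper information on the relevant families of quadratic exponential sums---or an altogether different, perhaps homogeneous-dynamics, input---than is presently available. The theorem proved in this paper, an asymptotic for the third moment (the $k=3$ case) but only in the restricted range $L>N^{1/4+\varepsilon}$, represents the current state of the art on precisely this obstruction, and is obtained by recasting the off-diagonal count as a modular counting problem in the spirit of Rudnick--Sarnak--Zaharescu and Heath-Brown rather than through any variance bound.
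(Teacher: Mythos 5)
You have correctly identified that this statement is a conjecture, not a theorem, and that the paper does not prove it (it is stated without proof and attributed to \cite{RSZ01}); so there is no proof in the paper against which to compare. Your program --- the method of moments reducing the distributional claim to $k$-level correlations, Poisson summation to isolate the diagonal main term, and the observation that the hypotheses are calibrated to control quadratic Gauss-sum resonances near rationals with large square divisors --- is the standard roadmap and is entirely consistent with the paper's own framing (Sections 1--3 and Appendix C). Your diagnosis of where the program breaks down for $k\geqslant 3$, namely that the Rudnick--Sarnak variance argument does not extend because the second moment of the triple correlation is not controllable by the same means, is exactly the obstruction the paper describes in Appendix C and circumvents only in the restricted range $L>N^{1/4+\varepsilon}$.

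One small imprecision worth flagging: the partial result you attribute to Rudnick--Sarnak--Zaharescu (Theorem~\ref{Theorem RSZ} in this paper) is not ``the $k=2$ case along a subsequence.'' It is Poisson convergence of \emph{all} moments along a subsequence $N_j$, for $\alpha$ with infinitely many very good prime-denominator approximations; the a.e.\ pair correlation result for all $N$ is Theorem~\ref{Theorem RS} and uses the variance argument you correctly say is unavailable at higher order. This does not affect the substance of your assessment.
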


\noindent \emph{Remark}: We remind the reader that, by Dirichlet's approximation theorem, 
the type of a real number is never less than $2$.
Further, by Khintchine's theorem, a generic number is of type
$2+\varepsilon$ for all $\varepsilon >0$.
One can also readily find explicit examples, like $\alpha = \sqrt{2}$, by using continued fractions. \\

Conjectures \ref{Conjecture RS} and \ref{Conjecture RSZ} appear to lie very deep. One hypothetical approach for showing the desired convergence in distribution would be to use the method of moments. More precisely, if one could show, for a generic $\alpha$, for a random variable $X_L \sim \Po(L)$, and for all $k \in \mathbb{N}$, that $\E W_{\alpha, L,N}^k \rightarrow \E X_L^k$ as $N \rightarrow \infty$ then Conjecture \ref{Conjecture RS} would follow. Rudnick--Zaharescu \cite{RZ02} used this approach to show that if $(a_n)_{n=1}^\infty$ is a lacunary sequence of natural numbers then for almost all $\alpha$ the sequence $(\alpha a_n \, \text{mod } 1)_{n=1}^\infty$ has spacing statistics that agree with the Poisson model. For the squares, the first non-trivial case is $k = 2$, and this was settled by Rudnick--Sarnak\footnote{We should remark that Rudnick--Sarnak phrased their result in terms of the pair correlation function, rather than explicitly mentioning $W_{\alpha,L,N}$, but the results are equivalent.} some 22 years ago.  
\begin{Theorem}[Rudnick--Sarnak \cite{RS98}]
\label{Theorem RS}
For almost all $\alpha \in [0,1]$,for all fixed $L >0$, \[ \mathbb{E} W_{\alpha,L,N}^2 = L + L^2 + o_{\alpha,L}(1)\] as $N \rightarrow \infty$. 
\end{Theorem}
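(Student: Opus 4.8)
The plan is to expand the second moment combinatorially and reduce it to a pair-correlation counting problem. Writing $\mathbf{1}_I$ for the indicator of the random interval $[Y,Y+L/N]$ modulo $1$, we have $W_{\alpha,L,N} = \sum_{n\leqslant N} \mathbf{1}_I(\alpha n^2)$, so that
\begin{equation*}
\E W_{\alpha,L,N}^2 = \sum_{m,n\leqslant N} \E_Y\big[\mathbf{1}_I(\alpha m^2)\mathbf{1}_I(\alpha n^2)\big].
\end{equation*}
The diagonal terms $m=n$ contribute exactly $N \cdot (L/N) = L$. For the off-diagonal terms, I would use the elementary identity $\E_Y[\mathbf{1}_I(x)\mathbf{1}_I(y)] = \max(0, L/N - \|x-y\|)$, where $\|\cdot\|$ denotes the distance to the nearest integer. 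Hence the off-diagonal contribution equals $\sum_{m\neq n} \max(0, L/N - \|\alpha(m^2-n^2)\|)$, which one recognises as (a smoothed form of) the pair correlation sum. The task is then to show that this quantity is $L^2 + o_{\alpha,L}(1)$ for almost all $\alpha$.

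The next step is to estimate $\sum_{m\neq n\leqslant N}\max(0, L/N - \|\alpha(m^2-n^2)\|)$ by grouping according to the value $h = m^2 - n^2$ and the number $r_N(h)$ of representations $h = m^2-n^2$ with $m,n\leqslant N$, reducing matters to $\sum_{h} r_N(h)\max(0, L/N - \|\alpha h\|)$. One then splits this sum according to the size of $h$: for $h$ up to some threshold (say $h \leqslant N^{\varepsilon}$ or a power of $N$ below $1$) one can afford to lose the trivial bound $r_N(h) \leqslant N$ and control $\|\alpha h\|$ directly, while for larger $h$ one uses the metric theory. The heart of the argument is a variance (second-moment-in-$\alpha$) computation: one bounds $\int_0^1 \big|\sum_{h} r_N(h)(\max(0, L/N - \|\alpha h\|) - \text{expected value})\big|^2\, d\alpha$, expanding the square, using Parseval/Fourier expansion of $\max(0, L/N - \|\cdot\|)$, and exploiting the fact that the divisor-type function $r_N(h)$ does not concentrate too badly (on average $r_N(h)$ behaves like a constant, with $\sum_{h\leqslant H} r_N(h)^2 \ll_\varepsilon H N^{\varepsilon}$ in the relevant range). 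Combining this variance bound with a Borel--Cantelli argument along a suitably chosen subsequence $N_j$ (e.g. $N_j = \lfloor 2^{j/C}\rfloor$ or $N_j = j^C$), together with a monotonicity/interpolation argument to fill in the gaps between consecutive $N_j$, yields the almost-everywhere statement.

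The main obstacle I anticipate is the variance estimate: one must show that the "off-diagonal of the off-diagonal" terms — coming from pairs $h, h'$ in the expansion of the square with $h \neq h'$ — genuinely cancel or are small, and this requires good control of the exponential sums $\sum_h r_N(h) e(kh\alpha)$ for integers $k$, which in turn hinges on bounds for $\sum_{h\leqslant H} r_N(h) r_N(h+\ell)$ type correlations, i.e. on the equidistribution of differences of two squares. The contribution of the diagonal $h = h'$ in the variance is the term that, after integration, produces the main term $L^2$ (via the Fourier coefficients of $\max(0,L/N-\|\cdot\|)$ summing appropriately), so care is needed to separate this from the genuine error. A secondary technical point is handling small $h$ and the well-approximable $\alpha$: one needs the full-measure set to exclude $\alpha$ that are too close to rationals with small square-free denominators, so a Khintchine-type zero-one argument must be invoked at the appropriate place. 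I also expect the passage from the subsequence $N_j$ to all $N$ to require a little care because $W_{\alpha,L,N}$ is not monotone in $N$, so one likely works instead with the monotone quantity obtained by replacing $L/N$ with a fixed length and bounding the discrepancy of $(\alpha(m^2-n^2))$ over dyadic ranges.
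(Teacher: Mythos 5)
Your decomposition and overall strategy (diagonal gives $L$; the off-diagonal gives the tent-function pair-correlation sum $\sum_{m\neq n}\max(0,\,L/N - \Vert\alpha(m^2-n^2)\Vert)$; prove $R_2\sim L$ almost everywhere by a variance-over-$\alpha$ computation using divisor bounds, then Borel--Cantelli along a polynomially spaced subsequence with a sandwiching argument to interpolate between subsequence values) matches the route taken in Section~\ref{Section reduction to correlation functions} and Appendix~\ref{Appendix A}, which in turn follows Rudnick--Sarnak. Two small corrections are worth making. First, the main term $L^2$ does not arise from the diagonal $h=h'$ in the variance: it comes from the zero Fourier mode $\widehat f(0)=(L/N)^2$ of the tent function, which, summed over the $N(N-1)$ off-diagonal pairs, already yields $L^2(1+O(1/N))$ before one centres; the diagonal of the centred variance contributes $O(L^3/N)=o(1)$, and the genuinely off-diagonal solutions of $j_1h_1=j_2h_2$ with $j_1h_1\neq 0$ are what the divisor bound controls, exactly as you anticipate. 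Second, no separate small-$h$ splitting or Khintchine-type exclusion of well-approximable $\alpha$ is needed at this stage: the Chebyshev/Borel--Cantelli argument automatically produces the required full-measure set without an explicit Diophantine characterisation --- that machinery only becomes relevant for higher moments and for controlling larger ranges of $L$.
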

\noindent This gives an estimate on the so-called \emph{number variance} $\Var(W_{\alpha,L,N})$, namely \[ \Var(W_{\alpha,L,N}) = L +o_{\alpha, L}(1)\] as $N \rightarrow \infty$.  

Very little is known regarding the higher moments, although certain results can be extracted from the literature. For larger $L$,  the issue is settled by the aforementioned discrepancy bounds. Indeed, expression (\ref{eq discrepancy estimate}) implies that for almost all $\alpha \in [0,1]$, for all integers $k \geqslant 1$, for all $N \in \mathbb{N}$ and for all $L \in \mathbb{R}$ in the range $ N^{1/2 + \varepsilon} \leqslant L \leqslant N$, 
\begin{equation}
\label{eq: consequence of discrepancy}
\mathbb{E} W_{\alpha,L,N}^k = L^k(1 + o_{\alpha,\varepsilon,k}(1))
\end{equation}
\noindent as $N \rightarrow \infty$ (where the error term is independent of the choice of parameters $L$). We will give the simple proof of (\ref{eq: consequence of discrepancy}) in Appendix \ref{Appendix B}, alongside other consequences of the discrepancy bounds. In passing, we will answer a question of Steinerberger from \cite{S18}. 

One might wonder whether the methods that Rudnick--Sarnak introduced to tackle the second moment in Theorem \ref{Theorem RS} could be applied to higher moments. Unfortunately, in the case when $L$ is constant, Rudnick--Sarnak already noted in \cite[Section 4]{RS98} that their method faces a major obstruction when applied to higher moments. We will describe this obstruction in Appendix \ref{Appendix C}, where (for the third moment) we observe that the obstruction persists for $L$ as large as $N^{1/3}$. \\

We now present the main result of this paper.
\begin{Theorem}[Main Theorem]
\label{Theorem TW}
Let $\varepsilon>0$. Then, for almost all $\alpha \in [0,1]$, for all $N \in \mathbb{N}$ and for all $L \in \mathbb{R}$ in the range $N^{1/4 + \varepsilon} < L \leqslant N$ we have \[ \mathbb{E} W_{\alpha,L,N}^3 = L^3(1 + o_{\alpha, \varepsilon}(1))\] as $N\rightarrow \infty$, where the $o_{\alpha,\varepsilon}(1)$ term is independent of the choice of the parameters $L$. 
\end{Theorem}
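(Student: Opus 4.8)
The plan is to evaluate $\mathbb{E}\,W_{\alpha,L,N}^{3}$ by Fourier analysis on $\mathbb{R}/\mathbb{Z}$, isolate the main term $L^{3}$, and show everything else is $o_{\alpha,\varepsilon}(1)\cdot L^{3}$ for almost every $\alpha$, uniformly for $N^{1/4+\varepsilon}<L\leqslant N$. Write $\delta=L/N$, $\mathrm{e}(t)=e^{2\pi i t}$, and let $S(\beta)=\sum_{n\leqslant N}\mathrm{e}(\beta n^{2})$ be the quadratic Weyl sum. After a routine smoothing --- prove the statement for fixed smooth test functions in place of the indicator of $[Y,Y+\delta]$, and recover the indicator case by sandwiching it between smooth minorants and majorants $f^{\pm}$ with $\int(f^{+}-f^{-})\ll\delta N^{-\varepsilon'}$, using monotonicity of $W_{\alpha,L,N}$ in $L$ --- one may assume $\widehat{f}$ decays rapidly for $|m|\gg N^{\varepsilon'}/\delta$. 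Expanding the three indicator factors in Fourier series and integrating out the uniform shift $Y$ forces the frequencies to sum to zero, so that
\[
\mathbb{E}\,W_{\alpha,L,N}^{3}=\sum_{m_{1}+m_{2}+m_{3}=0}\widehat{f}(m_{1})\widehat{f}(m_{2})\widehat{f}(m_{3})\,S(\alpha m_{1})S(\alpha m_{2})S(\alpha m_{3})+o(L^{3}).
\]
The term $m_{1}=m_{2}=m_{3}=0$ contributes $\widehat{f}(0)^{3}S(0)^{3}=\delta^{3}N^{3}=L^{3}$. The terms with exactly one $m_{i}$ vanishing contribute $3\delta N\sum_{m\neq0}|\widehat{f}(m)|^{2}|S(\alpha m)|^{2}=3\delta N\cdot\Var(W_{\alpha,L,N})$; since $\int_{0}^{1}\Var(W_{\alpha,L,N})\,d\alpha=N\sum_{m\neq0}|\widehat{f}(m)|^{2}\ll L$, a Chebyshev plus Borel--Cantelli argument over a net of pairs $(L,N)$, interpolated by monotonicity, gives $\Var(W_{\alpha,L,N})=o_{\alpha,\varepsilon}(1)\cdot L^{2}$ (this is the pair-correlation input behind Theorem \ref{Theorem RS}; for $L$ near $N$ the discrepancy bound (\ref{eq discrepancy estimate}) also suffices), so this contribution is $o_{\alpha,\varepsilon}(1)\cdot L^{3}$. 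The terms with two or three $m_{i}$ zero reproduce lower moments and are smaller still, so everything reduces to the genuinely off-diagonal sum
\[
E_{3}(\alpha):=\sum_{\substack{m_{1}+m_{2}+m_{3}=0\\ m_{1},m_{2},m_{3}\neq0}}\widehat{f}(m_{1})\widehat{f}(m_{2})\widehat{f}(m_{3})\,S(\alpha m_{1})S(\alpha m_{2})S(\alpha m_{3}).
\]

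The task is to show $E_{3}(\alpha)=o_{\alpha,\varepsilon}(1)\cdot L^{3}$ for almost every $\alpha$, uniformly in $L$. The natural route of bounding the second moment $\int_{0}^{1}|E_{3}(\alpha)|^{2}\,d\alpha$ is blocked: it is governed by the number of solutions of equations $\sum_{i}j_{i}(n_{a_{i}}^{2}-n_{b_{i}}^{2})=0$ in which the coefficients $j_{i}$ themselves range over a large set --- a counting problem of a harder character than the one behind the pair-correlation variance --- so standard variance bounds are unavailable. One should instead use the first moment. Taking absolute values and factoring,
\[
\int_{0}^{1}|E_{3}(\alpha)|\,d\alpha\;\leqslant\;\Big(\sum_{\substack{m_{1}+m_{2}+m_{3}=0\\ m_{i}\neq0}}\prod_{i}|\widehat{f}(m_{i})|\Big)\cdot\sup_{(m_{1},m_{2},m_{3})}\int_{0}^{1}\prod_{i}|S(\alpha m_{i})|\,d\alpha ;
\]
a convolution estimate gives $\sum_{m_{i}\neq0,\,\sum m_{i}=0}\prod_{i}|\widehat{f}(m_{i})|\ll\delta\,N^{o(1)}$, while H\"older, $1$-periodicity and the substitution $\gamma=\alpha m_{i}$ give $\int_{0}^{1}\prod_{i}|S(\alpha m_{i})|\,d\alpha\leqslant\int_{0}^{1}|S(\gamma)|^{3}\,d\gamma\ll N^{3/2+\varepsilon}$, the last bound by interpolation between $\int_{0}^{1}|S(\gamma)|^{2}\,d\gamma=N$ and the classical fourth-moment estimate $\int_{0}^{1}|S(\gamma)|^{4}\,d\gamma\ll N^{2+\varepsilon}$ (which counts $n_{1}^{2}+n_{2}^{2}=n_{3}^{2}+n_{4}^{2}$ and uses $r_{2}(m)\ll_{\varepsilon}m^{\varepsilon}$ for the sum-of-two-squares function). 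Hence $\int_{0}^{1}|E_{3}(\alpha)|\,d\alpha\ll N^{1/2+o(1)}L\ll N^{-2\varepsilon+o(1)}L^{3}$ precisely when $L>N^{1/4+\varepsilon}$, and this is summable over a dyadic net of $N$; Chebyshev and Borel--Cantelli, with the net-and-interpolation in $L$ used above, then appear to finish the argument in the stated range.

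An alternative and more flexible route --- the one developed in this paper, following Rudnick--Sarnak--Zaharescu and Heath-Brown --- organises the treatment of $E_{3}$ through modular arithmetic: one inserts a Diophantine approximation $a/q$ to $\alpha$ at the relevant scale, evaluates each $S(\alpha m)$ by the classical Gauss-sum expansion near $a/q$ (so that $|S(\alpha m)|$ is essentially $q^{-1}|g(am,q)|$ times a localised profile in $\alpha-a/q$, where $g(c,q)=\sum_{r\bmod q}\mathrm{e}(cr^{2}/q)$ has modulus $(q\gcd(c,q))^{1/2}$), and thereby reduces the contribution of $E_{3}$ to bounding, for each modulus $q$, a weighted count of resonant triples $m_{1}+m_{2}+m_{3}\equiv0\pmod q$ carrying large Gauss-sum products $|g(am_{1},q)g(am_{2},q)g(am_{3},q)|$ --- a modular counting problem governed by the sizes $\gcd(m_{i},q)$ and by quadratic Gauss-sum reciprocity. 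The main obstacle, I expect, is the interplay between \emph{uniformity} and smallness here: one needs the relevant mean-value, or the Gauss-sum product count, to save a genuine power of $N$ uniformly in the auxiliary parameters $(m_{1},m_{2},m_{3})$ and $q$ and over the metric sum across the Diophantine approximants that a generic $\alpha$ admits, so as to beat $L^{3}$ all the way down to $L=N^{1/4+\varepsilon}$, and then to package this with a Borel--Cantelli argument delivering the $o_{\alpha,\varepsilon}(1)$ uniformly in $L$ and as $N\to\infty$.
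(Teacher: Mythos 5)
Your primary argument is correct and takes a genuinely different, and considerably more elementary, route to the same threshold $N^{1/4+\varepsilon}$. After smoothing and Fourier-expanding $\mathbb{E}\,W_{\alpha,L,N}^3$, you peel off the main term $L^3$ and the one-frequency-zero diagonal $3L\cdot\Var(W_{\alpha,L,N})=o(L^3)$ (controlled by Chebyshev plus Borel--Cantelli on the $L^1(d\alpha)$-mean of $\Var W$), and then bound the fully off-diagonal remainder $E_3$ in $L^1(d\alpha)$ rather than $L^2(d\alpha)$ --- correctly identifying that the Rudnick--Sarnak spike of $E_3$ near $\alpha\approx 0$ (see Appendix~\ref{Appendix C}) ruins the variance route for $L<N^{1/3}$ while leaving the first moment unaffected. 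The decisive input is the uniform H\"{o}lder bound $\int_0^1\prod_i|S(\alpha m_i)|\,d\alpha\leqslant\int_0^1|S(\gamma)|^3\,d\gamma\ll N^{3/2+o(1)}$, valid for all nonzero $m_i$ by $1$-periodicity and with the cubic moment obtained by Cauchy--Schwarz between the second and fourth moments of $S$; combined with the convolution estimate $\sum_{\mathbf m}\prod_i|\widehat f(m_i)|\ll (L/N)N^{o(1)}$ this yields $\int_0^1|E_3|\,d\alpha\ll LN^{1/2+o(1)}$, Chebyshev gives a measure bound $\ll N^{-2\varepsilon+o(1)}$ summable over a suitable net of $(N,L)$, and monotonicity of $\mathbb{E}W^3$ in both $N$ and $L$ fills in the non-net parameters. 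This contrasts sharply with the paper's proof, which replaces $\alpha$ by a prime-denominator rational $a/q$ chosen at scale $q\asymp N^{(2+\beta)/2}$, converts $R_3$ into counts of solutions of $x^2-y^2\equiv c_1$, $y^2-z^2\equiv c_2\ (\mathrm{mod}\ q)$ in a box, proves a variance estimate for these counts over $(c_1,c_2)$ (Lemma~\ref{Lemma technical estimate}) via a delicate analysis of the complete exponential sums $S(\mathbf b,q)$, and closes with a Borel--Cantelli argument over rational approximants. Your route is closer in spirit to Lutsko's Weyl-sum method mentioned in the Introduction, and is much shorter; the paper's modular formulation, while heavier, makes the exact arithmetic bottleneck visible (the Cauchy--Schwarz step \eqref{the critical inequalities and CS}), yields an explicit Diophantine description of the exceptional $\alpha$, and motivates the refinement directions in Section~\ref{Section Concluding remarks}. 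One implementation detail worth making explicit: to interpolate from the net to all $(N,L)$, prove the estimate at net points on a slightly widened $L$-range (say $L>N^{1/4+\varepsilon/2}$), so that for every $N$ bracketed by consecutive net values $N_j<N<N_{j+1}$ each admissible $L\in(N^{1/4+\varepsilon},N^{1-\varepsilon})$ lies inside the controlled range for both $N_j$ and $N_{j+1}$ simultaneously.
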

\noindent Note in particular that $1/4 < 1/3$, so we successfully give an asymptotic expression for the third moment in part of the range in which the Rudnick--Sarnak obstruction holds (see Appendix \ref{Appendix C}). \\

To prove Theorem \ref{Theorem TW}, we will first perform a standard reduction to a statement concerning correlation functions. These functions are closely related to the moments $\mathbb{E} W_{\alpha,L,N}^k$, but they can be more convenient to analyse.  

\begin{Definition}[Correlation functions]
\label{Definition correlation function}
Let $\alpha \in [0,1]$, let $k \geqslant 2$ be a natural number, and let $g:\mathbb{R}^{k-1} \rightarrow [0,1]$ be a compactly supported function. Then the $k^{th}$ correlation function $R_k(\alpha,L,N, g)$ is defined to be \[\frac{1}{N}\sum\limits_{\substack{1 \leqslant x_1, \dots, x_k \leqslant N \\
\text{distinct}}} g\Big(\frac{N}{L} \{ \alpha (x_1^2 - x_2^2)\}_{\sgn}, \frac{N}{L}\{ \alpha (x_2^2 - x_3^2)\}_{\sgn}, \dots, \frac{N}{L}\{ \alpha (x_{k-1}^2 - x_k^2)\}_{\sgn}\Big),\] where \[ \{ \cdot \}_{\sgn}:\mathbb{R} \longrightarrow (-1/2,1/2]\] denotes the signed distance to the nearest integer, and $N$ and $L$ are real parameters. 
\end{Definition}
\noindent In practice, one only needs to understand the correlation functions when the test function $g$ is sufficiently nice, e.g. smooth or the indicator function of a box. 

When proving Theorem \ref{Theorem RS}, Rudnick--Sarnak manipulated the pair correlation function $R_2(\alpha,L,N,g)$. We manipulate the triple correlation function $R_3(\alpha,L,N,g)$, proving the following result, which, for readers more familiar with correlation functions than with the moments of $W_{\alpha,L,N}$, might seem to be more natural. 

\begin{Theorem}[Triple correlations]
\label{Theorem main theorem}
Let $\varepsilon >0$. Then, for almost all $\alpha \in [0,1]$, for all compactly supported continuous functions $g: \mathbb{R}^2 \longrightarrow [0,1]$, for all $N \in \mathbb{N}$ and for all $L \in \mathbb{R}$ in the range $N^{1/4 + \varepsilon}<L<N^{1- \varepsilon}$ we have\[ R_3(\alpha,L,N,g) = (1 + o_{\alpha,\varepsilon, g}(1))L^2 \Big(\int g(\mathbf{w}) \, d\mathbf{w}\Big)\] as $N \rightarrow \infty$, where the $o_{\alpha,\varepsilon,g}(1)$ term is independent of the choice of the parameters $L$. 
\end{Theorem}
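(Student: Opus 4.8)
The overall strategy is to establish Theorem \ref{Theorem main theorem} by a second-moment argument at the level of the \emph{Fourier/exponential-sum} expansion of $R_3$, rather than of $W_{\alpha,L,N}$ directly. First I would pass to a smooth weight: approximate $g$ from above and below by smooth compactly supported functions, and expand each such smooth $g$ in a (rapidly convergent) Fourier series in the two arguments. After a standard Fubini manoeuvre, $R_3(\alpha,L,N,g)$ becomes a sum over integer frequency pairs $(m_1,m_2)$ of $\widehat g(m_1,m_2)$ times an exponential sum
\[
\frac{1}{N}\sum_{\substack{1\le x_1,x_2,x_3\le N\\ \text{distinct}}} e\big(m_1 \alpha(x_1^2-x_2^2) + m_2\alpha(x_2^2-x_3^2)\big),
\]
up to a mild smooth cutoff forcing the relevant differences to lie within $O(L/N)$ of an integer. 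The $(m_1,m_2)=(0,0)$ term produces the main term $L^2\int g$ (the count of triples with both differences genuinely tiny, which by a lattice-point count is $\sim N\cdot (L/N)^2\cdot N^2$ after accounting for the near-integer constraint), and everything else must be shown to be a lower-order error for a.e.\ $\alpha$.

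\textbf{Key steps.} The engine is to reduce the off-diagonal contribution to \emph{modular counting problems}, in the spirit of Rudnick--Sarnak--Zaharescu and Heath-Brown. Writing the near-integer constraints $\alpha(x_1^2-x_2^2)\approx a_1$, $\alpha(x_2^2-x_3^2)\approx a_2$ with $a_1,a_2\in\mathbb Z$, one is led to count integer solutions of systems like $x_1^2-x_2^2 = a_1 q + (\text{small})$ etc.\ as $a/q$ ranges over denominators near $\alpha$; this is where factoring $x_1^2-x_2^2=(x_1-x_2)(x_1+x_2)$ and counting divisors/roots of quadratic congruences modulo $q$ enters. I would (i) fix a dyadic scale $Q$ for the approximating denominator $q\asymp Q$ and use Dirichlet/three-distance-type input to localise $\alpha$; (ii) for each $q$, bound the number of $(x_1,x_2,x_3)\in[1,N]^3$ with $(x_1-x_2)(x_1+x_2)\equiv a_1\ (q)$ and $(x_2-x_3)(x_2+x_3)\equiv a_2\ (q)$ and the two residues small, using standard bounds for the number of representations by binary quadratic forms / the divisor function on average; (iii) sum over $q$ in the dyadic block and over $a_1,a_2$, and finally over the frequencies $(m_1,m_2)$ using decay of $\widehat g$; (iv) integrate over $\alpha\in[0,1]$ (a Borel--Cantelli / Gallagher-type variance argument) to upgrade a power-saving mean-square bound into an almost-everywhere statement, where the $\varepsilon$-loss and the threshold $L>N^{1/4+\varepsilon}$ emerge from balancing the main term of size $L^2$ against the error, which morally behaves like $N^{1/2+o(1)}$ after all summations.

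\textbf{The main obstacle.} The crux — and the reason triple correlations have resisted the variance method that works for pair correlations — is controlling the \emph{correlated} modular counting: bounding the joint count of $x_1,x_2,x_3$ when $x_2$ is shared between the two congruence conditions. One cannot simply multiply independent bounds for the two pairs; the shared variable creates a genuinely two-dimensional Diophantine problem, essentially counting points on a surface defined by two quadratic equations modulo $q$, and the naïve bound is lossy by exactly the factor that pushes the threshold back up to $1/2$ (or $1/3$, as in the Rudnick--Sarnak obstruction of Appendix \ref{Appendix C}). The key new input must be an averaged estimate — over $q\asymp Q$, or over the off-sets $a_1,a_2$, or over $\alpha$ itself — that exploits cancellation or equidistribution in the $q$-aspect to recover the saving, so that the triple sum is controlled by something like the square-root of the number of ``diagonal'' solutions rather than by a worst-case pointwise bound. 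Getting this averaged count, and checking that the resulting error term genuinely beats $L^2$ down to $L=N^{1/4+\varepsilon}$, is where essentially all the work lies; the Fourier expansion, the smoothing, and the Borel--Cantelli passage to a.e.\ $\alpha$ are by comparison routine.
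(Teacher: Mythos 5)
Your skeleton --- rational approximation, modular counting with a shared variable $x_2$, Borel--Cantelli --- matches the paper's in broad outline, and you correctly identify the crux as the correlated counting across the two congruence conditions. But the engine you describe, a mean-square (variance) bound on the Fourier-expanded $R_3$ integrated over $\alpha\in[0,1]$, is precisely the approach that Appendix~\ref{Appendix C} shows is structurally blocked well above $N^{1/4}$. The quantity $\int_0^1 \bigl(R_3(\alpha,L,N,g)-L^2\widehat g(\mathbf{0})\bigr)^2\,d\alpha$ is dominated by a spike near $\alpha\approx 0$ (and near rationals with small denominators); for $k=3,d=2$ it is $\gg L N^{1-\varepsilon}\vert g(\mathbf{0})\vert^2$, and requiring this to be $o(L^4)$ forces $L\gg N^{1/3-o(1)}$. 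No amount of smoothing, reorganising the exponential sums, or summing frequencies differently removes this arithmetic spike, so step~(iv) of your plan fails before reaching the purported balance at $L\asymp N^{1/4}$.

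The paper never integrates $R_3$ over $\alpha$. Instead it fixes a \emph{single} prime $q$ much larger than $N$ (namely $q\asymp N^{(2+\beta)/2+O(\eta)}$, tuned so that the two error scales $qL/N$ and $N^2/q^{1-\eta}$ match, as in \eqref{equation relationship of N and q}), sandwiches $R_3$ between sums of $A(N,q,\overline{a}r_1,\overline{a}r_2)$ over boxes of residues, and proves the decisive power saving as a second moment \emph{over the residues $c_1,c_2\pmod q$} of the error term $\Delta(M,q,c_1,c_2)=\vert A(M,q,c_1,c_2)-(M/q)^3 A_0(q,c_1,c_2)\vert$: Lemma~\ref{Lemma technical estimate} gives $\sum_{c_1,c_2\le q}\Delta^2\ll M^3(\log q)^3+q^2(\log q)^6$, whereas the pointwise Weil/Hasse bound would only yield $q^{3+o(1)}$, far too weak. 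Borel--Cantelli enters only at the end, as a first-moment bound on the measure of $\alpha$ whose convergents have numerator in a sparse bad set --- not as a variance over $\alpha$. You do list ``over the off-sets $a_1,a_2$'' among several candidate averages, but you neither single it out as the one that works nor give a mechanism; in the paper it is an elementary Gauss-sum and $(j,k)\mapsto(l,k)$ reparametrisation argument (Section~\ref{Section: Proof of Lemma technical estimate}), not the divisor-function bookkeeping you describe, and the threshold $\beta<3/4$, i.e.\ $L>N^{1/4+\varepsilon}$, emerges from the Cauchy--Schwarz step \eqref{the critical inequalities and CS} in the bad-set estimate, not from an $N^{1/2}$-versus-$L^2$ balance.
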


Before continuing to survey other relevant papers, we should stop to explain why the spacing statistics of the sequence $\alpha n^2 \, \text{mod } 1$ are of a particular interest (aside from as a part of the larger endeavour of finding pseudorandomness in arithmetic sequences). This is due to a connection between number theory and theoretical physics known as \emph{arithmetic quantum chaos}. In brief, the spacing statistics between elements of the sequence of $\alpha n^2 \, \text{mod } 1$ correspond to the spacing statistics between the eigenvalues of a certain quantum system. (This system is a two-dimensional boxed oscillator, with a harmonic potential in one direction and hard walls in the other, as described in the introduction to \cite{RS98}.) A famous and far-reaching observation of Berry--Tabor \cite{BT77} then suggests that such spacing statistics in the semi-classical limit (i.e. the distribution of $W_{\alpha,L,N}$ for constant $L$ as $N \rightarrow \infty$) should be determined by the dynamics of the corresponding classical system. Regular (integrable) classical dynamics should correspond to spacing statistics in asymptotic agreement with the Poisson model. 

Unfortunately if $\alpha$ is rational (or is very well approximated by rationals with square denominators), it is easy to prove that high moments of $W_{\alpha,L,N}$ do not agree with the Poisson model (see \cite[Theorem 2]{RSZ01})! However, excluding such zero-measure counter-examples\footnote{Zaharescu \cite{Za03} showed that in a precise sense that, at least amongst all very well approximable $\alpha$, these were the only counter-examples.}, one might still hope for a metric result. 

There are precious few examples of fixed sequences 
for which full information is known about the spacing statistics. For the sequence $(\sqrt{n} \, \text{mod } 1)_{n=1}^{\infty}$ Elkies and McMullen \cite{EM04} have established, using dynamical methods,
the gap distribution of 
$(\sqrt{n} \, \text{mod } 1)_{n=1}^{\infty}$ (which is not from a Poisson model); El-Baz, Marklof, and Vinogradov \cite{BMV15} have
demonstrated that the second moment of this gap distribution is nonetheless in accordance with the Poisson model. Further, there are the results in \cite{M03} and \cite{EMM05} on the second moment of the gap distribution between values of quadratic forms. However, most of the results in the literature are metric in at least one parameter (e.g. \cite{RS98}, \cite{RZ02},\cite{Sa96}, \cite{ALL17}), and such results still have substantial content.

To delve further into the relationship to theoretical physics and to other spacing statistics would be to digress too far from our main theme; we direct the interested reader to the articles of Marklof \cite{M01} and Rudnick \cite{R08} for more on these issues. \\

Returning to the study of the moments of $W_{\alpha,L,N}$, and the discussion of relevant work, we continue with the paper \cite{RSZ01}. Here Rudnick--Sarnak--Zaharescu develop tools to relate the diophantine approximation properties of $\alpha$ to the size of the moments $\mathbb{E} W_{\alpha,L,N}^k$. The main result of that paper can be phrased as follows:

\begin{Theorem}[Rudnick-Sarnak-Zaharescu] \label{Theorem RSZ}
Let $\alpha \in [0,1]$, and suppose that there are infinitely many rationals $b_j/q_j$,
with $q_j$ prime, satisfying 
\begin{equation}
\Big\vert \alpha - \frac{b_j}{q_j}\Big\vert < \frac{1}{q_j^3}
\end{equation}
Then there is a subsequence $N_j \rightarrow \infty$, with $ \log N_j/
\log q_j
\rightarrow 1$ for which, for all $L>0$, \[ W_{\alpha,L,N_j} \xrightarrow{dist} \Po(L)\] as $j \rightarrow \infty$. 
\end{Theorem}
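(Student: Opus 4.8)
The plan is to establish the convergence in distribution by the method of moments. Since a Poisson random variable is determined by its moments, it suffices to show that for every fixed integer $k \geqslant 1$ and every fixed $L > 0$,
\[ \E W_{\alpha,L,N_j}^k \longrightarrow \E \Po(L)^k = \sum_{r=1}^{k} S(k,r) L^r \]
as $j \to \infty$, where $S(k,r)$ are the Stirling numbers of the second kind; equivalently, that the $k^{\mathrm{th}}$ factorial moment $\E[W_{\alpha,L,N_j}(W_{\alpha,L,N_j}-1)\cdots(W_{\alpha,L,N_j}-k+1)]$ tends to $L^k$. The interchange of the limit $j\to\infty$ with the moment expansion of the Poisson generating function is routine once one has a uniform-in-$j$ bound of the shape $\E[(W_{\alpha,L,N_j})_k] = O((C_L)^k)$, which the combinatorics below also supplies.

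The key is to choose the subsequence so that $W_{\alpha,L,N_j}$ linearises into a counting problem modulo $q_j$. I would take $N_j := (q_j-1)/2$, so that $\log N_j / \log q_j \to 1$ as required, and note that for $n \leqslant N_j$ one has $\alpha n^2 = b_j n^2/q_j + \delta_{n,j}$ with $\vert\delta_{n,j}\vert < N_j^2/q_j^3 < 1/(4q_j)$. Since $q_j$ is prime, the map $n \mapsto n^2 \bmod q_j$ is a bijection from $\{1,\dots,N_j\}$ onto the set of quadratic residues modulo $q_j$, so the points $\{b_j n^2/q_j \bmod 1 : n \leqslant N_j\}$ are exactly $\{m/q_j : m \in Q_j'\}$, each with multiplicity one, where $Q_j' := b_j Q_j$ and $Q_j$ is the set of quadratic residues. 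Hence, up to $O(q_j^{-1})$ boundary errors coming from the $\delta_{n,j}$ (harmless in the limit), $W_{\alpha,L,N_j}$ coincides with $\#\{m \in Q_j' : m \bmod q_j \in I\}$, where $I$ is an arc of $\lfloor L q_j/N_j\rfloor \to 2L$ consecutive residues with uniformly random starting point. Writing $\mathbf{1}(m \in Q_j') = \tfrac12\big(1 + \chi_j(b_j^{-1}m)\big)$ with $\chi_j = (\tfrac{\cdot}{q_j})$ the Legendre symbol, averaging the $k$-fold product over the random arc, and expanding over subsets $S \subseteq \{1,\dots,k\}$, the moment decomposes into a combinatorial main term (from $S = \emptyset$) plus terms built from interval sums $T_j(z) = \sum_{m \in I_z}\chi_j(b_j^{-1}m)$, namely quantities of the shape $q_j^{-1}\sum_z T_j(z)^s$ weighted by elementary factors.

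The sums $\sum_z T_j(z)^s$ are handled by Weil's bound: writing $T_j(z)^s$ as a sum of $\prod_{\ell}\chi_j(z+h_\ell)$ over $h_\ell$ in an interval, the inner sum $\sum_{z\bmod q_j}\prod_\ell \chi_j(z+h_\ell)$ equals $q_j + O_s(1)$ when the polynomial $\prod_\ell(z+h_\ell)$ is a perfect square (all $h_\ell$ appearing with even multiplicity) and is $O_s(\sqrt{q_j})$ otherwise; the same input controls the coincidence corrections arising from imposing distinctness of the $n_i$. Assembling everything, the off-diagonal contributions are $O_{k,L}(q_j^{-1/2+o(1)})$ and vanish, while the diagonal contributions together with the leading combinatorial term — a sum over $\mathbb{Z}/q_j$ of triangular weights $(\rho_{q_j} - \diam)_+$ (with $\rho_{q_j} = Lq_j/N_j \to 2L$) which converges to an explicit integral as $q_j \to \infty$ — reorganise into precisely $\sum_r S(k,r)L^r$. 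I expect this last point — showing that the main combinatorial term plus the diagonal Legendre-symbol terms conspire to yield exactly the Poisson moments, with no surplus — to be the main obstacle, since it requires a careful evaluation of the limiting constants rather than mere order-of-magnitude bookkeeping; the case $k=2$, where one recovers $\E[\,\cdot\,] = L + L^2$ in agreement with Theorem \ref{Theorem RS}, is a useful warm-up that already exhibits the cancellation between the $S = \{1,2\}$ term and the $m_1 = m_2$ diagonal.
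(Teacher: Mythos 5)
The paper does not prove this statement; it is quoted from \cite{RSZ01}, and the paper's own discussion of the RSZ method appears around \eqref{eq: Weil bound} in Section~\ref{section outline of proof}. Your proposal has a concrete flaw: the choice $N_j = (q_j-1)/2$. With this choice the set $\{\alpha n^2 \bmod 1 : n \leqslant N_j\}$ coincides (up to the perturbations $\delta_{n,j}$) with the \emph{entire} set $\{m/q_j : m \in b_jQ_j\}$ of dilated quadratic residues, which has density $\approx 1/2$ in the lattice $\{m/q_j\}$. A window of length $L/N_j \approx 2L/q_j$ then captures about $2L$ lattice points, each a dilated QR with asymptotic probability $1/2$, and the Weil bound makes these indicators asymptotically independent. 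The limit is therefore essentially $\mathrm{Bin}(2L,1/2)$, not $\Po(L)$. Your own proposed $k=2$ warm-up already disconfirms this choice: for $L=1$ one computes (using $\sum_m \chi(m)\chi(m+d) = -1$ for $d\not\equiv 0$) that $\mathbb{E}\,W_{\alpha,1,N_j}^2 \to 3/2$, whereas $\mathbb{E}\,\Po(1)^2 = 2$. So the place you flag as the main obstacle---getting the combinatorial constants to conspire into Poisson moments---is exactly where the argument fails, and it fails irreparably for this $N_j$: a dense, pseudorandom set cannot have Poisson local statistics.

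The fix is to take $N_j$ with $N_j/q_j \to 0$ while $\log N_j/\log q_j \to 1$ (e.g.\ $N_j = \lfloor q_j/\log q_j\rfloor$). Then $\{b_j n^2 \bmod q_j : n \leqslant N_j\}$ is a sparse (density $\to 0$) subset of the QRs, the window contains $\ell_j = Lq_j/N_j \to \infty$ lattice points with expected count $L$, and the binomial-to-Poisson passage can go through. But the price is steep: the truncation $n \leqslant N_j$ has no closed Legendre-symbol expression, so the pleasant identity $\mathbf{1}(m\in Q_j') = \tfrac12(1+\chi_j(b_j^{-1}m))$ is no longer available. One is forced to expand the cutoff by Fourier completion of sums and then apply Hasse--Weil to the resulting complete exponential sums, losing a factor of $\sqrt{q}(\log q)^3$; this is expression (9.15) of \cite{RSZ01} as recorded in \eqref{eq: Weil bound}, and the resulting requirement $N_j \geqslant q_j^{5/6+o(1)}$ is the genuine constraint on how quickly $N_j/q_j$ may decay. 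This incomplete-sum step is the actual core of the RSZ argument, and your sketch omits it.
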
 
\noindent 
The authors of \cite{RSZ01} sacrificed the genericness of $\alpha$ (working instead with those $\alpha$ which are unusually well-approximable) in favour of control over the moments $\mathbb{E} W_{\alpha,L,N_j}^k$ for constant $L$ and for all $k$. One may switch objectives in their analysis, sacrificing the range of $L$ in order to work with almost all $\alpha$. Applying this switch in the context of the third moment calculation, their method shows the following result:

\begin{Theorem}[Method of R--S--Z]
\label{Theorem method of RSZ}
Let $\varepsilon>0$. Then, for almost all $\alpha \in [0,1]$, for all $N \in \mathbb{N}$ and for all $L \in \mathbb{R}$ in the range $N^{3/5 + \varepsilon} < L \leqslant N$ we have\[ \mathbb{E} W_{\alpha,L,N}^3 = L^3(1 + o_{\alpha,\varepsilon}(1))\] as $N \rightarrow \infty$, where the $o_{\alpha,\varepsilon}(1)$ term is independent of the choice of parameters $L$. Moreover, one can give an explicit description of a suitable full-measure set of suitable $\alpha$ (in terms of properties of rational approximations to $\alpha$). 
\end{Theorem}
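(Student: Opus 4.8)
The plan is to adapt the argument of Rudnick--Sarnak--Zaharescu behind Theorem \ref{Theorem RSZ}, replacing the hypothesis ``$q$ prime, $|\alpha-a/q|<q^{-3}$, $N\asymp q$'' by what is true for almost every $\alpha$ — convergents with $|\alpha-a/q|\asymp q^{-2}$ and slowly growing denominators — and tracking the loss this forces. First I would reduce to a triple correlation estimate: expanding the cube in (\ref{eq: W definition}) and averaging over $Y$, one splits $\E W_{\alpha,L,N}^{3}$ according to the coincidence pattern of the three indices, obtaining
\[ \E W_{\alpha,L,N}^{3}=L\,R_3(\alpha,L,N,g_0)+3L\,R_2(\alpha,L,N,g_0^{(2)})+L, \]
where, with $\phi(w_1,w_2)$ the length of the shortest arc of $\mathbb{R}/\mathbb{Z}$ containing $0,w_1,w_1+w_2$, one sets $g_0(w_1,w_2):=\max(0,1-\phi(w_1,w_2))$ and $g_0^{(2)}(w):=\max(0,1-|w|)$, compactly supported and piecewise linear with $\int g_0=\int g_0^{(2)}=1$. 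Since $L>N^{1/2+\varepsilon}$, Theorem \ref{Theorem RS} gives $R_2(\alpha,L,N,g_0^{(2)})=L(1+o_{\alpha,\varepsilon}(1))$, so the last two terms contribute $O(L^2)=o(L^3)$, and it suffices to prove that for a.e.\ $\alpha$, every continuous compactly supported $g:\mathbb{R}^2\to[0,1]$, and all $N^{3/5+\varepsilon}<L\le N$, one has $R_3(\alpha,L,N,g)=(1+o_{\alpha,\varepsilon,g}(1))\,L^2\int_{\mathbb{R}^2}g$, uniformly in $L$.

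Next I would choose a rational approximant. Fix $\varepsilon'\ll\varepsilon$. For a.e.\ $\alpha$ the convergent denominators satisfy $q_{k+1}\le q_k^{1+\varepsilon'}$ for all large $k$ (Khintchine), so every interval $[A,B]$ with $A$ large and $B\ge A^{1+2\varepsilon'}$ contains a convergent denominator. The interval $[\,N^{3/2}L^{-1/2}N^{\varepsilon/4},\,L^2N^{-\varepsilon/4}\,]$ has this shape precisely when $L\gg N^{3/5+O(\varepsilon)}$ — its endpoints cross at $L=N^{3/5}$ — so I would pick a convergent $a/q$ with $q$ lying in it. Then $|\alpha-a/q|\le q^{-2}$ forces $|\alpha(x_i^2-x_j^2)-(a/q)(x_i^2-x_j^2)|\le N^2q^{-2}=o(L/N)$ uniformly over $x_i,x_j\le N$; since $g$ is uniformly continuous and, by (\ref{eq discrepancy estimate}), at most $O_g(NL^2)$ summands of $R_3$ are non-zero, I may replace $\alpha$ by $\beta:=a/q$ inside $R_3(\alpha,L,N,g)$ with an error $o(L^2)$.

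Then comes the modular counting. Singling out the middle variable $x_2=y$ and putting $G(\theta_1,\theta_2):=g(\tfrac{N}{L}\{\theta_1\}_{\sgn},\tfrac{N}{L}\{\theta_2\}_{\sgn})$ on $(\mathbb{R}/\mathbb{Z})^2$, I would rewrite, up to the $o(L^2)$ coming from near-diagonal terms,
\[ R_3(\beta,L,N,g)=\frac{1}{N}\sum_{y\le N}\sum_{x_1,x_3\le N}G\big(\beta(x_1^2-y^2),\,\beta(y^2-x_3^2)\big), \]
and reduce, by Stone--Weierstrass, to $g$ of product form $g(u,v)=g_1(u)g_2(v)$ at the cost of a uniform error $O(\eta L^2)$ with $\eta\to0$. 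The inner double sum then factors, and each factor $\sum_{x\le N}g_j(\tfrac{N}{L}\{\beta(x^2-y^2)\}_{\sgn})$ is the evaluation of a fixed function of bounded variation (total variation is scale-invariant) along the sequence $(\beta x^2\bmod 1)_{x\le N}$; by Koksma's inequality it equals $c_jL+O(N D_N)$, where $c_j=\int_{\mathbb{R}}g_j$ and $D_N$ is the discrepancy of $(\beta x^2\bmod 1)_{x\le N}$. The Erd\H{o}s--Tur\'{a}n inequality together with the Gauss-sum estimate $\big|\sum_{x\le N}e(\beta k x^2)\big|\ll(q\gcd(k,q))^{1/2}\log q$ gives $D_N\ll q^{1/2+o(1)}N^{-1}$. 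Multiplying the two factors, summing over the $N$ values of $y$, and dividing by $N$,
\[ R_3(\beta,L,N,g_1\otimes g_2)=L^2 c_1 c_2+O\big(Lq^{1/2+o(1)}+q^{1+o(1)}\big)=L^2\!\int_{\mathbb{R}^2}(g_1\otimes g_2)+o(L^2), \]
the error being $o(L^2)$ because $q\le L^2N^{-\varepsilon/4}$. Undoing the Stone--Weierstrass approximation ($\eta\to0$), the substitution $\alpha\mapsto\beta$, and the first reduction then finishes the proof; the explicit full-measure set of $\alpha$ is the set on which the quantitative Khintchine bound of the second step holds.

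The hard part is reconciling two opposite constraints on $q$: the approximation $\alpha\approx a/q$ survives rescaling by $N/L$ only if $q\gg N^{3/2}/L^{1/2}$, and for a.e.\ $\alpha$ this cannot be improved, since $|\alpha-a/q|\asymp q^{-2}$ for infinitely many convergents and no better; whereas the quadratic sequence $(\beta x^2\bmod 1)_{x\le N}$ equidistributes finely enough (discrepancy $\ll L/N$) only if $q\ll L^2$. These ranges overlap exactly when $L\gg N^{3/5}$ — there is no slack to be recovered — which is precisely why one does \emph{not} want to route the main theorem through a one-parameter discrepancy bound for a single quadratic sequence, and why RSZ's own regime $N\asymp q$ behaves differently (there the modular count is clean and needs no Gauss-sum input). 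A lesser nuisance: when $q$ has small prime factors (e.g.\ $q$ even) the sequence $(\beta x^2\bmod 1)_{x\le N}$ need not equidistribute at all, which I would circumvent by restricting to convergent denominators coprime to a fixed small modulus — a positive proportion of any long run of convergents, since consecutive denominators are coprime — or by the substitution $x_1^2-x_2^2=(x_1-x_2)(x_1+x_2)$.
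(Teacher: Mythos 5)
Your proposal is correct, and it follows a genuinely different route from the one the paper indicates. The paper extracts Theorem~\ref{Theorem method of RSZ} from the Rudnick--Sarnak--Zaharescu machinery: after the sandwiching step \eqref{equation sandwiching key}, one replaces each $A(N,q,\overline{a}r_1,\overline{a}r_2)$ \emph{pointwise} by $(N/q)^3 A_0(q,\overline{a}r_1,\overline{a}r_2)+O(q^{1/2+o(1)})$, using the Hasse--Weil bound \eqref{eq: Weil bound} together with the evaluation of $A_0$ in Lemma~\ref{lem: approx size of A_not}; the requirement $N^3q^{-2}>q^{1/2+o(1)}$, combined with $q\asymp N^{(2+\beta)/2}$, yields $\beta<2/5$, i.e.\ $L>N^{3/5}$. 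You avoid the two-dimensional modular count entirely: by singling out the middle variable and passing to product test functions, the inner sum over $(x_1,x_3)$ factors, and each factor is handled by Koksma's inequality together with the Erd\H{o}s--Tur\'an discrepancy bound $D_N((a x^2/q)_{x\le N})\ll q^{1/2+o(1)}/N$ (whose ultimate input is again a Gauss-sum estimate). Both routes are governed by the same $\sqrt q$ saving, hence the identical threshold, but yours is more self-contained -- it needs neither Lemma~\ref{lem: approx size of A_not} nor the Weil bound for curves, and it works for any odd $q$, so the prime-denominator device of Section~\ref{Section: approximation with prime denominator} can be replaced by the simple observation that consecutive convergent denominators are coprime. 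Your accounting of the two competing constraints ($q\gg N^{3/2}/L^{1/2}$ from the rational approximation step, $q\ll L^2$ from the discrepancy step) is exactly the paper's tension phrased in one dimension, and your explicit description of the good set (the type-$(2+\varepsilon)$ condition on the convergents) matches the intended ``moreover'' clause. Two small remarks: the paper reduces to indicator functions of boxes rather than invoking Stone--Weierstrass -- this is slightly cleaner since box indicators are already products and have finite variation trivially; and of your two suggested fixes for even $q$, the first (taking an odd convergent, available since $\gcd(q_n,q_{n+1})=1$) is the one that actually works, while the factorisation $x_1^2-x_2^2=(x_1-x_2)(x_1+x_2)$ does not by itself repair the one-variable exponential sum $\sum_{x\le N}e(kax^2/q)$ that your discrepancy bound requires.
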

\noindent Although the range of $L$ in Theorem \ref{Theorem method of RSZ} is much smaller than the range in our result, the method of R--S--Z yields a more explicit description of the full-measure set of suitable $\alpha$. We will indicate how to extract Theorem \ref{Theorem method of RSZ} from \cite{RSZ01} in Section \ref{section outline of proof} below.

Our approach to proving Theorem \ref{Theorem main theorem} is  inspired by this work of R--S--Z, but also by the work of Heath-Brown in \cite{HB10}, who introduced a related technique for studying the pair correlation function $R_2(\alpha, L,N,g)$. The full description of the method will come in Section \ref{section outline of proof}, but we sketch the idea here, so as to explain in a rough way how we extract an improvement over \cite{RSZ01}. After having replaced $\alpha$ by a rational approximation $a/q$, one transforms the triple correlation function into an expression that counts the number of solutions to certain polynomial equations modulo $q$. The equations which occur are of the form
\begin{equation}
\label{exposition equation}
 \{ x,y,z \leqslant N: x^2 - y^2 \equiv c_1 \,(\text{mod } q), y^2 - z^2 \equiv c_2 \, (\text{mod } q)\},
\end{equation} for certain ranges of $N$ and $q$ and for certain sets of coefficients $c_1$ and $c_2$. In \cite{RSZ01} the number of solutions was estimated by using the `completion of sums' technique to remove the $N$ cut-off, followed by an implementation of the Hasse-Weil bound. In our work we manage to take advantage of the extra averaging over $c_1$ and $c_2$ which is present in the problem, together with some intricate (though elementary) exponential sum arguments, which ends up leading to a stronger bound for certain ranges of $N$ and $q$. Heath-Brown did something similar for the pair correlation function \cite{HB10}, but the analysis of the relevant exponential sums for the triple correlations is substantially more delicate. 

Another relevant work is the paper of Kurlberg and Rudnick \cite{KR99}, in which those authors established that the spacing of quadratic residues mod $q$ as the number of prime factors of $q$ grows is in agreement with the Poisson model. Lemma \ref{lem: approx size of A_not} below could be viewed as a special case of the arguments of that paper. However, as will become evident, the work here necessarily concerns a rather sparse subset of the set of quadratic residues mod $q$, as $N \approx q^{\theta}$ with $\theta < 1$, and so  the work of \cite{KR99} is not directly applicable. Pair correlations of rational functions mod $q$ were also studied by Boca and Zaharescu \cite{BZ00}, but again, we will not be able to use that paper directly. 

Very recently, and after the first version of the present manuscript was submitted, Lutsko released a preprint \cite{Lu20} which generalised Theorem \ref{Theorem main theorem} to all long-range correlation functions $R_k(\alpha,L,N,g)$, provided $L \geqslant N^{\frac{k-2}{2k-2} + \varepsilon}$. When $k=3$, the threshold $\frac{k-2}{2k-2}$ recovers our threshold of $N^{1/4}$ for triple correlations. Note also that $\frac{k-2}{2k-2} \rightarrow 1/2$ from below as $k \rightarrow \infty$, i.e. Lutsko's threshold approaches the trivial discrepancy bound for large $k$. The methods of \cite{Lu20} seem to be completely different to our own, and are instead based on an analysis of the sums $\sum_{n \leqslant N} e(\alpha m n^2)$ using the van der Corput B process (and subsequent stationary phase estimates). It remains to be seen whether any improvement to the threshold $c_3 = 1/4$ could be derived by combining these two different approaches.\\

The structure of the paper is as follows. In Section \ref{Section reduction to correlation functions} we will give the standard argument (passing from moments to correlation functions) which reduces Theorem \ref{Theorem TW} to Theorem \ref{Theorem main theorem}. The proof of Theorem \ref{Theorem main theorem} is then given in Section \ref{section outline of proof}, in which it is resolved subject to three auxiliary results (one concerning diophantine approximation, the other two concerning the number of solutions to certain diophantine equations similar to (\ref{exposition equation})). The final three sections of the paper settle these results -- one per section -- thus concluding the main proof. 

The appendices contain some arguments that are minor modifications of the literature (but which are nonetheless pertinent to the main paper). These are, respectively, a version of Theorem \ref{Theorem RS} in which $L$ grows with $N$; the proof of the asymptotic (\ref{eq: consequence of discrepancy}); and the discussion of the obstruction to the study of triple correlations that was identified by Rudnick-Sarnak. \\

\noindent\textbf{Acknowledgements}: We would like to thank Zeev Rudnick and Christoph Aistleitner for helpful comments relating to previous versions of the manuscript, and Andrew Granville and Dimitris Koukoulopoulos for many interesting conversations. Thanks also to several anonymous referees for their suggestions and corrections. \\

\noindent \textbf{Notation}: Most of our notation is standard, but perhaps we should highlight a few conventions. For a natural number $q$ we let $e_q(x)$ be a shorthand for $e^{2 \pi i x/q}$, and given a parameter $M\geqslant 1$ we let $[M]$ denote the set $\{m \in \mathbb{N}: 1 \leqslant m \leqslant M\}$. In particular $1_{[M]}$ denotes the indicator function of all the natural numbers at most $M$. If a range of summation is given as $\sum_{m \leqslant M}$ then it is assumed that $m$ is a natural number and that $m \geqslant 1$. Finally, for $x \in \mathbb{R}$, we will use $\Vert x\Vert$
to denote the distance from $x$ to the nearest integer,
and $\{x\}_{\sgn}$ to denote the signed distance to the nearest integer from $x$.\\

\section{Reduction to correlation functions}
\label{Section reduction to correlation functions}
In this short section we will reduce Theorem \ref{Theorem TW} to Theorem \ref{Theorem main theorem}. Firstly, since estimate (\ref{eq: consequence of discrepancy}) holds for large $L$ we may assume without loss of generality that $L < N^{1- \varepsilon}$. Then we use linearity of expectation to deduce that
\begin{align}
\label{three terms}
\mathbb{E}W_{\alpha,L,N}^3 &= \sum\limits_{x,y,z \leqslant N} \mathbb{P}(\alpha x^2, \alpha y^2, \alpha z^2 \in [Y,Y + L/N] \, \text{mod } 1) \nonumber\\
& = \sum\limits_{x \leqslant N} \mathbb{P}(\alpha x^2 \in [Y,Y+L/N] \, \text{mod } 1) + 3 \sum\limits_{ \substack{ x,y \leqslant N \\ x \neq y}} \mathbb{P}(\alpha x^2, \alpha y^2 \in [Y,Y+L/N] \, \text{mod } 1) \nonumber \\&+ \sum\limits_{ \substack{ x,y,z, \leqslant N \\ \text{distinct}}} \mathbb{P}(\alpha x^2, \alpha y^2 , \alpha z^2 \in [Y,Y+L/N] \, \text{mod } 1),
\end{align}
\noindent where $Y$ is a random variable that is uniformly distributed modulo $1$. The first of the three terms in (\ref{three terms}) is equal to $L$, so may be absorbed into the error term of Theorem \ref{Theorem TW}. The second term is 
\begin{align}
\label{second term}
=& 3 \sum\limits_{ \substack{ x,y \leqslant N \\
 x \neq y\\
\Vert \alpha(x^2-y^2)\Vert\leqslant L/N}} \Big( \frac{L}{N} - \Vert \alpha (x^2 - y^2)\Vert\Big) \nonumber\\
=& 3L \Big(\frac{1}{N} \sum\limits_{ \substack{x,y, \leqslant N \\ x \neq y}} \max \Big(0, 1 - \frac{N}{L}\Vert \alpha(x^2 - y^2)\Vert\Big)\Big)\nonumber \\
=& 3L R_2(\alpha,L,N,f),
\end{align}
where $f:\mathbb{R} \longrightarrow [0,1]$ is the function $f(x) = \max(0,1 - \vert x\vert)$ and $R_2(\alpha,L,N,f)$ is the pair correlation function as defined in Definition \ref{Definition correlation function}. In Appendix \ref{Appendix A} we will show, by a trivial adaptation of the known techniques, that for almost all $\alpha \in [0,1]$ one has
\begin{equation}
\label{pair correlation estimate that we need}
R_2(\alpha,L,N,f) = (1 + o_{\alpha,f}(1))L \Big(\int f(x) \, dx\Big) 
= (1 + o_{\alpha, f}(1))L. 
\end{equation} Therefore expression (\ref{second term}) is equal to $3L^2 + o_{\alpha,f}(L^2)$, which may also be absorbed into the error term of Theorem \ref{Theorem TW}. 

What remains is the third term of (\ref{three terms}). This is equal to 
\begin{equation}
\label{beginning of triple correlation}
\frac{L}{N} \sum\limits_{ \substack{ x,y,z \leqslant N \\ \text{distinct}}}\max(0,( 1 - \frac{N}{L}\max(\Vert \alpha(x^2 - y^2)\Vert, \Vert \alpha(y^2 - z^2)\Vert, \Vert \alpha(z^2 - x^2)\Vert)).
\end{equation}
Since $(x^2 - y^2) + (y^2 - z^2) = x^2 - z^2$ we see that (\ref{beginning of triple correlation}) is equal to an expression of the form $R_3(\alpha,L,N,g)$ for some continuous compactly supported function $g:\mathbb{R}^2 \longrightarrow [0,1]$. Indeed, \[ g(w_1,w_2) = \begin{cases}
 \max(0, 1 - w_1 - w_2) & w_1,w_2 \geqslant 0 \\
 \max(0, 1 - \max(w_1, -w_2)) & w_1 \geqslant 0, \, w_2 \leqslant 0 \\
 \max(0, 1 - \max(-w_1,w_2)) & w_1 \leqslant 0, \, w_2 \geqslant 0 \\
 \max(0, 1 + w_1 + w_2) & w_1, w_2 \leqslant 0. \end{cases} \]
An elementary calculation then demonstrates that \[ \int\limits_{-\infty}^{\infty} \int\limits_{-\infty}^{\infty} g(w_1,w_2) \, dw_1\, dw_2 = 1.\] Therefore, by Theorem \ref{Theorem main theorem}, if $L > N^{\frac{1}{4} + \varepsilon}$ then for almost all $\alpha \in [0,1]$ expression (\ref{beginning of triple correlation}) is equal to 
$L^3(1 + o_{\alpha,\varepsilon}(1))$ as $N \rightarrow \infty$. So Theorem \ref{Theorem TW} is proved. \qed \\

We make the usual remark that, by approximating the continuous function $g$ above and below by step functions, to prove Theorem \ref{Theorem main theorem} it will be enough to prove the following result: 

\begin{Theorem}
\label{Theorem main theorem step function}
Let $\varepsilon >0$. Then for almost all $\alpha \in [0,1]$, for all $s_1,t_1,s_2,t_2 \in \mathbb {R}$ for which $s_1 < t_1$ and $s_2 < t_2$, and for all $L$ in the range $N^{1/4 + \varepsilon}<L<N^{1- \varepsilon}$, we have \[ R_3(\alpha,L,N,g_{\mathbf{s},\mathbf{t}}) = (1 + o_{\alpha,\varepsilon, \mathbf{s},\mathbf{t}}(1))L^2 (t_1 - s_1)(t_2 - s_2)\] as $N\rightarrow \infty$, where $\mathbf{s} = (s_1,s_2)$, $\mathbf{t} = (t_1,t_2)$, and $g_{\mathbf{s},\mathbf{t}}$ is the indicator function of the box $[s_1,t_1] \times [s_2,t_2]$. \\
\end{Theorem}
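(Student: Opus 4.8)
The plan is to prove Theorem~\ref{Theorem main theorem step function} by reducing the triple correlation count to a modular counting problem, following the philosophy of Rudnick--Sarnak--Zaharescu and Heath-Brown. First I would set up the metric scheme: rather than attempting to control the second moment of $R_3$ directly (which, as the introduction stresses, is the fundamental impasse), I would fix a sparse but sufficiently dense sequence of scales $N_j$ (say $N_j = 2^j$ or $N_j = \lfloor(1+\delta)^j\rfloor$) and, for each $j$, choose a good rational approximation $a/q$ to $\alpha$ with $q$ in a suitable range — ideally $q$ prime, of size roughly $N_j^{1/\theta}$ for the appropriate exponent $\theta$ dictated by the diophantine hypothesis. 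The first auxiliary input (the diophantine approximation lemma alluded to in the outline) should guarantee that for almost all $\alpha$ such approximations exist along the chosen scales, with the error $|\alpha - a/q|$ small enough that replacing $\alpha$ by $a/q$ inside the $\{\cdot\}_{\sgn}$ perturbs the argument of $g_{\mathbf{s},\mathbf{t}}$ by $o(1)$ for all the $O(N^2)$ triples $(x,y,z)$ that contribute. Then a Borel--Cantelli / monotonicity argument interpolates from the sparse scales $N_j$ to all $N$ and all $L$ in the stated range.

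Next, with $\alpha$ replaced by $a/q$, the sum $R_3(a/q,L,N,g_{\mathbf{s},\mathbf{t}})$ becomes
\[
\frac{1}{N}\sum_{\substack{x_1,x_2,x_3\le N\\ \text{distinct}}} g_{\mathbf{s},\mathbf{t}}\Bigl(\tfrac{N}{L}\{\tfrac{a}{q}(x_1^2-x_2^2)\}_{\sgn},\ \tfrac{N}{L}\{\tfrac{a}{q}(x_2^2-x_3^2)\}_{\sgn}\Bigr),
\]
and since $\{\tfrac{a}{q}m\}_{\sgn}$ depends only on $(am \bmod q)$ and is small precisely when $am$ lies near $0$ or $q$ modulo $q$, the indicator of the box translates into a congruence condition $x_1^2 - x_2^2 \equiv c_1$, $x_2^2 - x_3^2 \equiv c_2 \pmod q$ where $(c_1,c_2)$ ranges over a box of residues of size roughly $(Lq/N)\times(Lq/N)$ (after absorbing $a$, a unit mod $q$, by a change of variables). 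So $R_3$ is, up to the harmless removal of the non-distinctness diagonal, essentially $\frac{1}{N}\sum_{c_1,c_2} \mathcal{N}(c_1,c_2)$ where $\mathcal{N}(c_1,c_2) = \#\{x,y,z\le N : x^2-y^2\equiv c_1,\ y^2-z^2\equiv c_2 \pmod q\}$ as in~\eqref{exposition equation}. The main term should come from the expectation $\frac{N^3}{q^2}$ of $\mathcal{N}$, which after summing over the box of $(c_1,c_2)$ and dividing by $N$ reproduces $L^2 (t_1-s_1)(t_2-s_2)$; this is where I would be careful to track that $N/q$ (equivalently $N^{\,\theta-1}$ up to $N^{\varepsilon}$) being small makes the point set a genuinely sparse subset of quadratic residues, so off-the-shelf results like Kurlberg--Rudnick do not directly apply.

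The crux — and the step I expect to be the main obstacle — is bounding the error term, i.e.\ showing that $\sum_{c_1,c_2} \bigl(\mathcal{N}(c_1,c_2) - \tfrac{N^3}{q^2}\bigr)$ over the box is $o(N L^2)$ when $L > N^{1/4+\varepsilon}$. The natural route is to open $\mathcal{N}(c_1,c_2)$ with additive characters mod $q$: write the congruences via $\frac{1}{q^2}\sum_{h_1,h_2 \bmod q} e_q(h_1(x^2-y^2-c_1) + h_2(y^2-z^2-c_2))$, carry out the sum over $c_1,c_2$ in the box (producing a two-dimensional Gauss-sum / geometric-series kernel $K(h_1,h_2)$ that localizes $h_1,h_2$ to size $\lesssim q/L \cdot N^{\varepsilon}$ with $L^2$-type total mass), and then face incomplete quadratic Gauss sums $\sum_{x\le N} e_q(h_1 x^2)$, $\sum_{y\le N} e_q((h_2-h_1)y^2)$, $\sum_{z\le N} e_q(-h_2 z^2)$. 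The $h_1=h_2=0$ term gives the main term; for the rest one needs to exploit cancellation in these incomplete Gauss sums. Unlike the pair-correlation case (Heath-Brown), here the three sums are coupled through $h_1, h_2-h_1, h_2$ simultaneously, so a single completion-of-sums plus Weil bound is lossy; instead I would split the $(h_1,h_2)$ sum according to $\gcd$'s with $q$ (using primality of $q$ to keep this to a few cases: $h_1\equiv 0$, $h_2\equiv 0$, $h_1\equiv h_2$, or all nonzero and distinct), estimate each incomplete Gauss sum by completion giving $\ll q^{1/2}\log q$ (or the sharper $\ll q^{1/2}$) and, crucially, use the extra averaging over the box of $(c_1,c_2)$ — equivalently over $(h_1,h_2)$ with the decaying kernel — to gain the factor that pushes the exponent down from $1/2$ to $1/4$. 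The delicate exponential-sum bookkeeping — matching the kernel decay against the number and size of the surviving Gauss-sum contributions in each $\gcd$-regime, and verifying the clean-up of boundary/overlap terms — is precisely the part the introduction flags as "substantially more delicate" than Heath-Brown's pair-correlation analysis, and is where I expect the real work to lie; it is presumably isolated into the two auxiliary counting results referenced in the proof outline.
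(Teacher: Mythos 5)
Your proposal captures the paper's high-level philosophy — replace $\alpha$ by a prime-denominator rational $a/q$, convert the triple correlation into a count of solutions to $x^2-y^2\equiv c_1$, $y^2-z^2\equiv c_2\pmod q$ with $(c_1,c_2)$ in a set of size roughly $(Lq/N)\times(Lq/N)$, and beat the pointwise Weil bound by exploiting the averaging over that set — but the route you sketch for the error term is genuinely different from the paper's, and as stated it has a gap.

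You propose to Fourier-expand the two congruences in dual variables $h_1,h_2\bmod q$, sum over the $(c_1,c_2)$ box to produce a localizing kernel $K(h_1,h_2)$, and then estimate the resulting incomplete quadratic Gauss sums $\sum_{x\le N}e_q(h_1x^2)$; i.e., a first-moment computation in the spirit of Heath-Brown's pair-correlation analysis. The paper instead proves a second-moment (variance) bound $\sum_{c_1,c_2\le q}\Delta(M,q,c_1,c_2)^2\ll (\log q)^3M^3+(\log q)^6q^2$ (Lemma \ref{Lemma technical estimate}), by Fourier-expanding the cutoffs $1\le x,y,z,x',y',z'\le M$ in \emph{six} variables and evaluating complete exponential sums. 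This variance, taken over all of $(\mathbb{Z}/q\mathbb{Z})^2$ rather than over the box, is then fed through a maximal version (Lemma \ref{Lemma uniform}), a Cauchy--Schwarz against the counting function $f_{\beta,\eta}(q,c_1,c_2)$, and a Borel--Cantelli over the primes $q$ (Lemma \ref{Lemma bounding key error term}). The metric scheme is also different: instead of interpolating from a sparse scale sequence $N_j$, the paper fixes $L\approx N^{1-\beta}$ and proves a constrained version one $\beta$ at a time (Theorem \ref{Theorem main theorem step function constrained}), covering the range $N^{1/4+\varepsilon}<L<N^{1-\varepsilon}$ by a finite $\eta$-net of $\beta$'s; the rational approximation itself comes from Harman's quantitative Khintchine theorem rather than from a Borel--Cantelli over scales.

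The concrete gap is the phrase ``after absorbing $a$, a unit mod $q$, by a change of variables.'' The residues are $(c_1,c_2)=(\overline a r_1,\overline a r_2)$, the image of a box under multiplication by a fixed unit — not a box — and because the counting function $A(N,q,c_1,c_2)$ carries the cutoff $x,y,z\le N$, no change of variables removes $a$. This is exactly where the paper's architecture is load-bearing: computing the variance over \emph{all} $(c_1,c_2)$ is what makes it possible to average over $a$ and run Borel--Cantelli over $q$ to discard the bad $a$'s (see the weight $f_{\beta,\eta}$ and the Cauchy--Schwarz step \eqref{the critical inequalities and CS}). Your first-moment-over-a-box route does not obviously survive this obstruction — you would need to show the signed error is small \emph{for each fixed $a$ satisfying the diophantine condition}, and the localization properties of your kernel $K(h_1,h_2)$ are tied to $\overline a$ in a way your sketch does not track. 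Relatedly, the mechanism you invoke for lowering the exponent from $1/2$ to $1/4$ (``use the extra averaging \ldots\ to gain the factor'') is left unexplained; in the paper, the $1/4$ threshold emerges from the balance between Lemma \ref{Lemma uniform}, Lemma \ref{Lemma bound on f squared}, and the constraint $\beta<3/4$ in \eqref{the critical inequalities and CS}, and a naive count of contributions from the kernel $K$ times three incomplete Gauss sums bounded by $q^{1/2}\log q$ does not by itself reproduce this.
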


It will turn out to be crucial in our subsequent methods that the ratio $\log L/\log N$ does not vary too wildly. To finish this section, we will show how to deduce Theorem \ref{Theorem main theorem step function} from the following weaker result:

\begin{Theorem}
\label{Theorem main theorem step function constrained}
Let $\beta \in (0, 3/4)$ and let $\eta >0$. Then, if $\eta\max(\beta^{-1}, (3/4 - \beta)^{-1})$ is small enough, the following holds: for almost all $\alpha \in [0,1]$, for all $s_1,t_1,s_2,t_2 \in \mathbb {R}$ for which $s_1 < t_1$ and $s_2 < t_2$, for all $N \in \mathbb{N}$ and for all $L \in \mathbb{R}$ in the range $N^{1 - \beta - \eta}<L<N^{1- \beta + \eta}$, we have \[ R_3(\alpha,L,N,g_{\mathbf{s},\mathbf{t}}) = (1 + o_{\alpha,\beta, \eta, \mathbf{s},\mathbf{t}}(1))L^2 (t_1 - s_1)(t_2 - s_2)\] as $N\rightarrow \infty$, where $\mathbf{s} = (s_1,s_2)$, $\mathbf{t} = (t_1,t_2)$, and $g_{\mathbf{s},\mathbf{t}}$ is the indicator function of the box $[s_1,t_1] \times [s_2,t_2]$. The error term is independent of the exact choice of the parameters $L$. 
\end{Theorem}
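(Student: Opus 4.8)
The plan is to carry out the programme sketched in the introduction: approximate $\alpha$ by a carefully chosen rational $a/q$, convert the correlation sum into a modular point-counting problem, and estimate that count by exponential sums, crucially exploiting the averaging over the ``gap'' residues. This splits into three pieces, matching the three auxiliary results announced in the introduction.

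First, the diophantine-approximation step: for almost every $\alpha$ and every large $N$, produce a fraction $a/q$ in lowest terms, with $q$ prime (or at least with tightly controlled square part) and with $q$ equal to a prescribed power of $N$ up to a factor $N^{o(1)}$ --- the exponent being fixed by $\beta$, so that a single choice serves uniformly for all $L$ in the window $N^{1-\beta-\eta}<L<N^{1-\beta+\eta}$ --- satisfying $\Vert\alpha-a/q\Vert\,N^2=o(L/N)$. The whole point of restricting to a narrow window is that it pins the target order of magnitude of $q$ to a clean power of $N$, so that the metrical input (controlled growth of the continued-fraction denominators of $\alpha$ together with Borel--Cantelli) can guarantee such a $q$ for almost every $\alpha$ and all large $N$. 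A sandwiching argument, fattening and thinning the box $[s_1,t_1]\times[s_2,t_2]$ by an amount negligible on the relevant scale, then lets me replace $R_3(\alpha,L,N,g_{\mathbf s,\mathbf t})$ by $R_3(a/q,L,N,g_{\mathbf s',\mathbf t'})$ up to an error $o_{\mathbf s,\mathbf t}(L^2)$.

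Next, the combinatorial reduction. Writing $c_1,c_2$ for the residues modulo $q$ of the two differences of squares, one gets
\[ N\,R_3(a/q,L,N,g_{\mathbf s,\mathbf t})=\sum_{c_1\in C_1}\sum_{c_2\in C_2}M(c_1,c_2),\qquad M(c_1,c_2):=\#\{(x,y,z)\in[N]^3:\ x^2-y^2\equiv c_1,\ y^2-z^2\equiv c_2\ (\mathrm{mod}\ q)\}, \]
where $C_1,C_2\subseteq\mathbb Z/q\mathbb Z$ are the images under multiplication by $\overline a$ of arcs of length $\asymp(t_i-s_i)qL/N$. Since $M(c_1,c_2)\approx N^3/q^2$ for generic $(c_1,c_2)$, plugging in this heuristic yields $N^{-1}|C_1||C_2|\cdot N^3/q^2\asymp L^2(t_1-s_1)(t_2-s_2)$, the claimed main term; so what remains is the error bound $N^{-1}\sum_{c_1\in C_1}\sum_{c_2\in C_2}\bigl(M(c_1,c_2)-N^3/q^2\bigr)=o(L^2)$.

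Finally, the estimation of the count, where the other two auxiliary results enter --- and this is the hard part. I would first discard the ``degenerate'' coefficients, meaning those $c_i$ for which the conic $x^2-y^2\equiv c_i$ is anomalous (for instance $c_i\equiv 0$, or $c_i$ with unusual $\gcd$ behaviour relative to $q$): few such $c_i$ lie in an arc --- this is the lemma generalising an argument of Kurlberg--Rudnick \cite{KR99} --- and their contribution is handled by a crude bound on $M$. For the surviving $c_1,c_2$ I would detect the two congruences with additive characters modulo $q$, producing a bilinear expression in incomplete quadratic Gauss sums $S(h)=\sum_{x\le N}e_q(hx^2)$; the sum over $c_1\in C_1$, $c_2\in C_2$ attaches Dirichlet-kernel weights in the dual variables (twisted by $\overline a$), and it is this extra decay --- absent from the Hasse--Weil argument of \cite{RSZ01} --- that produces the improvement. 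Combining the standard completion-of-sums estimate for $S(h)$ with a careful decomposition according to $\gcd(h_1,q)$ and $\gcd(h_2,q)$ and their interaction with the $\overline a$-twist should yield a power saving whenever $q$ is, roughly, at most $N^{2-\delta}$, which on unwinding the relation between $q$, $N$ and $L$ is precisely the threshold $L>N^{1/4+\varepsilon}$. The main obstacle is exactly this bilinear Gauss-sum estimate: unlike the pair-correlation analysis of Heath-Brown \cite{HB10}, there is now a genuinely two-dimensional family of sums, and the system $x^2-y^2\equiv c_1$, $y^2-z^2\equiv c_2$ can degenerate in several ways, so extracting the full strength of the $c_1,c_2$-averaging needs the delicate (if elementary) case analysis flagged in the introduction; by contrast the metrical and combinatorial steps are essentially routine.
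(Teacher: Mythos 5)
Your skeleton --- rational approximation with $q$ prime of size pinned by $\beta$, sandwiching to a modular count, subtracting the expected value and bounding the remainder --- matches the paper's. But the key step, bounding $\sum_{c_1\in C_1}\sum_{c_2\in C_2}\bigl(M(c_1,c_2)-N^3/q^2\bigr)$, is where your route diverges from the paper's, and as sketched it runs into a genuine wall. You propose a direct estimate: detect the congruences by additive characters, yielding (schematically) $\frac{1}{q^2}\sum_{h_1,h_2}D_1(\overline{a}h_1)\,D_2(\overline{a}h_2)\,S(h_1)S(h_2-h_1)S(-h_2)$ with $D_i$ Dirichlet-type kernels and $S(h)=\sum_{x\le N}e_q(hx^2)$. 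The obstruction is that $C_i$ is not an arc but the $\overline{a}$-dilate of one, so $D_i(\overline{a}h_i)$ concentrates near $h_i\equiv a m\pmod q$ for small $|m|$, while the Gauss sum $S(h_i)$ must be evaluated at exactly those twisted frequencies: the localization and the Gauss-sum structure do not decouple. This is a Kloosterman-type sum in disguise, and the paper's concluding remarks flag precisely this route as an open problem (``Thoughts move towards expressing the relevant exponential sums as an average of Kloosterman-type sums \ldots which might be another route for future research'').

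The paper sidesteps the twist entirely by a three-step mechanism absent from your proposal. First it proves the global variance bound $\sum_{c_1,c_2\bmod q}\Delta(M,q,c_1,c_2)^2\ll(\log q)^3M^3+(\log q)^6q^2$ over \emph{all} residue pairs (Lemma~\ref{Lemma technical estimate}), where $\Delta$ measures the deviation of $A(M,q,c_1,c_2)$ from $(M/q)^3A_0(q,c_1,c_2)$: summing over all $(c_1,c_2)$ turns the problem into a clean complete six-variable exponential sum $S(\mathbf b,q)$ with no $\overline a$-twist in sight, and $q$ prime makes the ensuing reparametrisations elementary. Second, it localizes from this global variance to the arcs by Cauchy--Schwarz against the multiplicity function $f_{\beta,\eta}(q,c_1,c_2)$ counting triples $(a,r_1,r_2)$ with $\overline a r_i\equiv c_i$ and $|r_i|$ small, whose second moment is controlled by a divisor argument (Lemma~\ref{Lemma bound on f squared}) --- this is where the constraint $\beta<3/4$ originates. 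Third, this Cauchy--Schwarz step is not strong enough to hold for every numerator $a$, so the paper runs a \emph{second} Borel--Cantelli argument over primes $q$ to discard the exceptional set $\operatorname{Bad}_{\beta,\eta}(q)$ of bad $a$; you have folded all the metric input into the approximation step, missing this second discard. Without the variance/Cauchy--Schwarz/second-Borel--Cantelli mechanism you are left needing the bilinear twisted-Gauss-sum estimate directly, which is not known.
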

\begin{proof}[Deduction of Theorem \ref{Theorem main theorem step function} from Theorem \ref{Theorem main theorem step function constrained}]
Let $\varepsilon >0$ and choose $\eta>0$ such that $\eta \varepsilon^{-1}$ is suitably small. Let $\{\beta_1,\dots, \beta_R\}$ be a maximal $\eta$-separated subset of $[\varepsilon,3/4 - \varepsilon]$. Then for each $\beta_i$ we get a full measure set $\Omega_i \subset [0,1]$ such that, if $\alpha \in \Omega_i$, the conclusion of Theorem \ref{Theorem main theorem step function constrained} holds with $\beta = \beta_i$ and with $\eta$ as chosen. We claim that $\Omega = \cap_{i \leqslant R} \Omega_i$ is a suitable full measure set of values of $\alpha$ in Theorem \ref{Theorem main theorem step function}. 

Indeed, let $\alpha \in \Omega$ and let $s_1,t_1,s_2,t_2 \in \mathbb{R}$ with $s_1 < t_1$ and $s_2 < t_2$. From Theorem \ref{Theorem main theorem step function constrained} we know that for all $i \leqslant R$, for any $\delta>0$, for all $N \geqslant N_0(\alpha,\beta_i,\eta, \delta, \mathbf{s},\mathbf{t})$, and for any $L$ in the range $N^{1 - \beta_i - \eta} < L < N^{1 - \beta_i + \eta}$, we have \begin{equation}
\label{eq:delta correlation}
\vert R_3(\alpha,L,N,g_{\mathbf{s},\mathbf{t}}) - L^2(t_1 - s_1)(t_2 - s_2)\vert < \delta  L^2.
\end{equation} Now, given any $N$ and any $L$ in the range $N^{1/4 + \varepsilon} < L < N^{1 - \varepsilon}$, there exists an $i$ such that $N^{1 - \beta_i - \eta} < L < N^{1 - \beta_i + \eta}$. Therefore, if $N \geqslant \max_{i\leqslant R} N_0(\alpha,\beta_i,\eta, \delta, \mathbf{s},\mathbf{t})$, the inequality (\ref{eq:delta correlation}) holds. Since $\delta$ is arbitrary, the conclusion of Theorem \ref{Theorem main theorem step function} holds. \end{proof}
\vspace{3mm}
\section{Proof of Theorem \ref{Theorem main theorem step function constrained}}
\label{section outline of proof}
Our task is now to prove Theorem \ref{Theorem main theorem step function constrained}. Let us fix $\beta$ and $\eta$, which is assumed to be sufficiently small, and for the time being let us also fix some $\alpha \in [0,1]$ and some $s_1,t_1,s_2,t_2 \in \mathbb{R}$ with $s_1 < t_1$ and $s_2 < t_2$. We may also assume, without loss of generality, that $N$ is sufficiently large in terms of $\alpha$, $\beta$, $\eta$, $\mathbf{s}$ and $\mathbf{t}$. 

We begin by replacing $\alpha$ with a suitably good
rational approximation $a/q$. Assume that there exists some rational $a/q$, with $q$ prime, for which
\begin{equation}
\label{eq: diophantine approx} \Big
\vert \alpha -\frac{a}{q} \Big\vert 
\leqslant \frac{1}{q^{2-\eta}} 
\end{equation} and
\begin{equation}
\label{equation relationship of N and q}
N^{\frac{2 + \beta}{2} + 10\eta} \leqslant q \leqslant 2 N^{\frac{2 + \beta}{2} + 10\eta}.
\end{equation}
\noindent We will show in Section \ref{Section: approximation with prime denominator} that almost all $\alpha$ admit such an approximation. Then, for such a pair $(N,q)$, we define \begin{equation}
\label{equation definition of ANqc_1c_2}
A(N,q,c_1,c_2): = \vert \{ x,y,z \leqslant N : x^2 - y^2 \equiv c_1 \, (\text{mod } q), \, y^2 - z^2 \equiv c_2 \, (\text{mod } q)\}\vert.
\end{equation} We then claim that \begin{align}
\label{equation sandwiching key}
\frac{1}{N}\sum\limits_{\substack{(r_1,r_2) \in S^- \\ r_1r_2 \neq 0 \\ r_1 + r_2 \neq 0}} 
 A(N,q,\overline{a} r_1,\overline{a} r_2) 
&\leqslant 
R_3(\alpha, L,N,g_{\mathbf{s},\mathbf{t}}) \leqslant \frac{1}{N}
\sum\limits_{\substack{(r_1,r_2) \in S^+ \\ r_1r_2 \neq 0 \\ r_1 + r_2 \neq 0}}  A(N,q,\overline{a} r_1,\overline{a} r_2),
\end{align}
\noindent where $\overline{a}$ denotes the inverse of $a$ modulo $q$,
\begin{equation}
\label{eq S minus}
S^{-} = \Big \{ (r_1,r_2) \in \mathbb{Z}^2: \frac{s_i qL}{N} + \frac{N^2}{q^{1- \eta}} \leqslant r_i \leqslant \frac{t_i qL}{N} - \frac{N^2}{q^{1- \eta}}, \, i = 1,2
\Big\},
\end{equation} and 
\begin{equation}
\label{eq S plus}
S^+ = \Big \{ (r_1,r_2) \in \mathbb{Z}^2: \frac{s_i qL}{N} - \frac{N^2}{q^{1- \eta}} \leqslant r_i \leqslant \frac{t_i qL}{N} + \frac{N^2}{q^{1- \eta}}, \, i=1,2
\Big \}.
\end{equation}

Indeed, given $x,y$ in the range $1 \leqslant x,y \leqslant N$ let us consider $r_1 \in \mathbb{Z}$ to be defined by the relation \[ a (x^2 - y^2) \equiv r_1 \, (\text{mod }q)\] and $-q/2 < r_1 < q/2$. Suppose that \[ \frac{s_1 qL}{N} + \frac{N^2}{q^{1- \eta}} \leqslant r_1 \leqslant \frac{t_1 qL}{N} - \frac{N^2}{q^{1- \eta}}.\] Then we have the inequalities
\begin{align*}
\{\alpha(x^2 - y^2) \}_{\sgn} \leqslant \{\frac{a}{q} (x^2 - y^2)\}_{\sgn} + \Big\vert \Big( \alpha - \frac{a}{q}\Big)(x^2 - y^2)\Big\vert  \leqslant \frac{r_1}{q} + \frac{N^2}{q^{2-\eta}} \leqslant  t_1 \frac{L}{N},
\end{align*}
\noindent and \[\{\alpha(x^2 - y^2) \}_{\sgn} \geqslant \{\frac{a}{q} (x^2 - y^2)\}_{\sgn} -\Big\vert \Big( \alpha - \frac{a}{q}\Big)(x^2 - y^2)\Big\vert  \geqslant \frac{r_1}{q} - \frac{N^2}{q^{2-\eta}} \geqslant  s_1 \frac{L}{N}.\] Suppose instead that \[ s_1 \frac{L}{N} \leqslant \{ \alpha(x^2 - y^2)\}_{\sgn} \leqslant t_1 \frac{L}{N}.\] Then, similarly to the above, we have
\[r_1 = q \{\frac{a}{q}(x^2 - y^2)\}_{\sgn} \leqslant q\Big(\{ \alpha(x^2 - y^2)\}_{\sgn} + \Big\vert \Big( \alpha - \frac{a}{q}\Big)(x^2 - y^2)\Big\vert\Big) \leqslant t_1 \frac{Lq}{N} + \frac{N^2}{q^{1- \eta}} \] and \[r_1 = q \{\frac{a}{q}(x^2 - y^2)\}_{\sgn} \geqslant q\Big(\{ \alpha(x^2 - y^2)\}_{\sgn} - \Big\vert \Big( \alpha - \frac{a}{q}\Big)(x^2 - y^2)\Big\vert\Big) \geqslant s_1 \frac{Lq}{N} - \frac{N^2}{q^{1- \eta}} . \] 

Finally, take $1 \leqslant z \leqslant N$ and define $r_2$ by the relation \[ a(y^2 - z^2) \equiv r_2 \, (\text{mod } q)\] with $-q/2 < r_2 < q/2$. Then, since $N < q/2$, we have that $x,y,z$ are distinct if and only if $r_1r_2 \neq 0$ and $r_1 + r_2 \neq 0$. 

From all these observations taken together, claim (\ref{equation sandwiching key}) is settled. \\

\noindent \emph{Remark}: The reader might find it helpful to note, at this early stage, that the relative sizes of $N$ and $q$ were chosen so that the two terms $qL/N$ and $N^2/q^{1-\eta }$ which appear in (\ref{eq S minus}) and (\ref{eq S plus}) are of approximately the same magnitude, namely $q^{(2-\beta)/(2+\beta)}$. \\

A substantial portion of this paper will involve estimating the quantity $A(N,q,c_1,c_2)$, on average over $c_1$ and $c_2$. To this end, we let \[ A_0(q,c_1,c_2) = \vert \{ x,y,z \leqslant q: x^2 - y^2 \equiv c_1 \, (\text{mod } q), \, y^2 - z^2 \equiv c_2 \, (\text{mod } q)\}\vert\]  be the number of solutions to the key congruences, in which the variables $x,y,z$ may range over the entire field $\mathbb{Z}/q\mathbb{Z}$. One might reasonably expect that \[A(M,q,c_1,c_2) \approx (M/q)^3 A_0(q,c_1,c_2)\] as long as $M$ is large enough, and so we introduce the difference 
\begin{equation}
\label{eq: Delta difference}
\Delta(M,q,c_1,c_2) : = \Big\vert A(M,q,c_1,c_2) - \Big(\frac{M}{q}\Big)^3 A_0(q,c_1,c_2)\Big\vert.
\end{equation}
The following key technical lemma will be proved in Section \ref{Section: Proof of Lemma technical estimate}: \begin{Lemma}
\label{Lemma technical estimate}
If $q$ is an odd prime and $M<q$, then
\begin{equation}
\label{eq: technical estimate}
\sum\limits_{c_1,c_2 \leqslant q}
 \Delta(M,q,c_1,c_2)^2 \ll (\log q)^3M^3 + (\log q)^6q^{2}\end{equation} as $q \rightarrow \infty$.\end{Lemma}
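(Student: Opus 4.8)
The plan is to expand the second moment of $\Delta(M,q,c_1,c_2)$ using additive characters mod $q$ to detect the congruence conditions, and then to carefully separate the "diagonal" contribution (which produces the main term $(M/q)^3 A_0$) from the genuinely oscillatory part. Write $A(M,q,c_1,c_2) = \sum_{x,y,z\leqslant M} \frac{1}{q^2}\sum_{u,v} e_q(u(x^2-y^2-c_1) + v(y^2-z^2-c_2))$, and similarly $(M/q)^3 A_0(q,c_1,c_2) = \frac{M^3}{q^2}\cdot\frac{1}{q^3}\sum_{x,y,z\bmod q}\sum_{u,v}e_q(\cdots)$. The $(u,v)=(0,0)$ terms are identical in both expressions and cancel in $\Delta$. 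So $\Delta(M,q,c_1,c_2)$ is controlled by $\frac{1}{q^2}\sum_{(u,v)\neq(0,0)}|G_M(u,v)||e_q(-uc_1-vc_2)|$ plus a corresponding expression for the complete sums, where $G_M(u,v) = \sum_{x,y,z\leqslant M} e_q(ux^2 + (v-u)y^2 - vz^2)$ factors as a product of three one-dimensional incomplete quadratic Gauss sums $\sum_{n\leqslant M} e_q(wn^2)$.

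The key point is that after squaring $\Delta$ and summing over $c_1,c_2\leqslant q$, the orthogonality of additive characters collapses the $c_1,c_2$ sum: $\sum_{c_1,c_2\leqslant q} e_q((u-u')c_1 + (v-v')c_2) = q^2 \mathbf{1}_{u\equiv u', v\equiv v'}$. Hence
\[
\sum_{c_1,c_2\leqslant q}\Delta(M,q,c_1,c_2)^2 \ll \frac{1}{q^2}\sum_{(u,v)\neq(0,0)} |G_M(u,v) - \tfrac{M^3}{q^3}G_q^\ast(u,v)|^2,
\]
where $G_q^\ast(u,v)$ is the corresponding complete sum over $\mathbb{Z}/q\mathbb{Z}$. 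Now I would invoke standard Gauss sum evaluations: for $w\not\equiv 0$, the complete sum $\sum_{n\bmod q} e_q(wn^2)$ has modulus $\sqrt q$; and for the incomplete sum, the classical bound (via completion of sums / the Fourier expansion of $\mathbf{1}_{[M]}$) gives $\sum_{n\leqslant M} e_q(wn^2) \ll \min(M, \sqrt q \log q) \ll \sqrt q\log q$ uniformly, and more precisely $\ll q^{1/2}(1 + \|\cdot\|\text{-type factors})$. When one coordinate $w$ of $(u, v-u, -v)$ vanishes (say $u=0$, forcing $v\neq 0$), the factorization of $G_M$ degenerates to $M\cdot|\sum e_q(vy^2)|\cdot|\sum e_q(vz^2)|$, producing a term of size $M\cdot q$; summing $|G_M|^2/q^2$ over the $O(q)$ such $(u,v)$ gives $\ll \frac{1}{q^2}\cdot q\cdot M^2 q^2 \ll M^2 q$ — wait, this needs care, since we must pair against the complete-sum correction. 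I would handle the three regimes — (i) all of $u, v-u, -v$ nonzero, (ii) exactly one of them zero, (iii) the completion correction terms — separately, bounding each incomplete Gauss sum by $O(\sqrt q \log q)$ and each complete one by $O(\sqrt q)$, and in regime (ii) using the $M$-saving from the degenerate coordinate together with the compensating main term.

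The main obstacle I anticipate is regime (ii), and more generally the bookkeeping of the cross terms between $G_M$ and the correction $\frac{M^3}{q^3}G_q^\ast$: one must verify that the would-be large contributions ($\sim Mq$ from a degenerate coordinate) are genuinely cancelled by the $\frac{M^3}{q^3}G_q^\ast$ subtraction up to an acceptable error, rather than surviving at a size that would overwhelm the claimed bound $(\log q)^3 M^3 + (\log q)^6 q^2$. Concretely, in regime (i) one expects $|G_M| \ll q^{3/2}(\log q)^3$, so $\frac{1}{q^2}\sum_{(u,v)} |G_M|^2 \ll \frac{1}{q^2}\cdot q^2 \cdot q^3 (\log q)^6 = q^3(\log q)^6$ — too weak; the saving must come from exploiting that $\sum_{n\leqslant M}e_q(wn^2)$ is typically much smaller than $\sqrt q\log q$, i.e. from an $\ell^2$ (mean-square over $w$) bound of the shape $\sum_{w\bmod q} |\sum_{n\leqslant M}e_q(wn^2)|^2 \ll q M + \text{lower order}$, which follows from opening the square and using orthogonality again (the diagonal $n_1^2\equiv n_2^2$ contributes $\ll qM$, the off-diagonal $\ll$ (divisor-type) $\times$ (something)). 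Feeding such a mean-square bound into the three-fold product — using it for one or two of the three factors and the pointwise $O(\sqrt q\log q)$ bound for the rest — should produce exactly the two terms $(\log q)^3 M^3$ (from the regime where the $M$-aspect dominates) and $(\log q)^6 q^2$ (from the regime where completion costs dominate). So the real work is: (a) establish the mean-square bound $\sum_w |\sum_{n\leqslant M}e_q(wn^2)|^2 \ll qM\log q$ or similar; (b) combine it multiplicatively across the three coordinates of $G_M$ while tracking the linear constraint $u + (v-u) + (-v) = 0$ among them; (c) check the degenerate-coordinate and correction terms do not exceed the budget.
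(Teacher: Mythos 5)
Your starting point---expanding the congruence detectors in additive characters mod $q$ and applying Parseval in $(c_1,c_2)$---is valid, and it gives the exact identity
\[
\sum_{c_1,c_2\leqslant q}\Delta(M,q,c_1,c_2)^2 \;=\; \frac{1}{q^2}\sum_{(u,v)\neq(0,0)}\bigl|G_M(u,v)-(M/q)^3 G_q(u,v)\bigr|^2,
\]
where $G_M(u,v)=g_M(u)\,g_M(v-u)\,g_M(-v)$ with $g_M(w)=\sum_{n\leqslant M}e_q(wn^2)$, and $G_q$ its complete analogue; the $(u,v)=(0,0)$ term vanishes exactly as you observe. This entry point does differ from the paper's: the paper Fourier-expands the six cut-offs $x,y,z,x',y',z'\leqslant M$ and works with the inequality $S_1-2S_2+S_3\leqslant\sum_{\mathbf{b}\neq\mathbf{0}}|\widehat{1}_{[M]}(b_1)\cdots\widehat{1}_{[M]}(b_6)|\,|S(\mathbf{b},q)|$, evaluating the complete six-variable exponential sum $S(\mathbf{b},q)$ explicitly via Gauss sums, a change of variable $\ell=j/k$, and a case analysis of a resulting quadratic polynomial.

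The gap is in how you propose to bound the right-hand side, and it is not merely bookkeeping. Your plan is to combine the pointwise bound $|g_M(w)|\ll\sqrt q\log q$ with the mean-square bound $\sum_w|g_M(w)|^2\ll qM$. In the non-degenerate block $uv(u-v)\not\equiv 0\pmod q$ this cannot reach the claimed estimate: any assignment of these two bounds across the three factors (two of which are free, one being $-u-(v-u)=-v$) gives at best
\[
\frac{1}{q^2}\sum_{u,v}|G_M(u,v)|^2 \;\ll\; \frac{1}{q^2}\cdot q(\log q)^2\cdot\Bigl(\sum_{w}|g_M(w)|^2\Bigr)^{2}\;\ll\; qM^2(\log q)^2,
\]
which is off by a factor of roughly $q/M$ from the target $(\log q)^3 M^3+(\log q)^6 q^2$ (at $M\asymp q^{2/3}$ this is $q^{7/3}$ against the required $q^2$, up to logs). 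This loss is not fixable by rebalancing the exponents. The underlying obstruction is that $\frac{1}{q^2}\sum_{u,v}|G_M(u,v)|^2$ is precisely $\sum_c B(c)^3$ with $B(c)=\#\{m,m'\leqslant M: m^2-(m')^2\equiv c\pmod q\}$, and for $M>q^{1/2}$ the bulk values $B(c)\approx M^2/q$ contribute a genuine $\asymp M^6/q^2$, which exceeds $M^3$ as soon as $M>q^{2/3}$. This bulk contribution must be cancelled against the $(M/q)^3G_q$ correction \emph{in the non-degenerate block as well}, not only in the degenerate one; your triangle inequality $|H|^2\ll|G_M|^2+(M/q)^6|G_q|^2$ followed by pointwise and mean-square bounds on $g_M$ discards exactly this cancellation. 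You flag the cancellation issue only in your ``regime (ii)''; it is also the crux of regime (i).

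The way to retain the cancellation is to complete each $g_M$: for $w\not\equiv 0$ one has $g_M(w)=g_q(w)\sum_{b\bmod q}\widehat{1}_{[M]}(b)\,e_q(-\overline{4w}\,b^2)$, the $b=0$ term being exactly $(M/q)g_q(w)$, so that subtracting $(M/q)^3 G_q(u,v)$ removes exactly the all-zero $(b_1,b_2,b_3)$ term from the triple product. Squaring and carrying out the $(u,v)$-sum then produces the paper's six-variable exponential sum $S(\mathbf{b},q)$, and one is forced into essentially the paper's explicit Gauss-sum evaluation. In other words, your cleaner Parseval identity does not lead to a shorter route: once the required cancellation is kept, the computation collapses onto the paper's; the pointwise and mean-square shortcuts you substitute for it are insufficient.
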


\noindent Lemma \ref{Lemma technical estimate} can be used in turn in Section \ref{Section: Proofs of Lemma 2.3} to prove the following lemma on the average size of the error terms $\Delta(N,q,\overline{a}r_1, \overline{a}r_2)$, which is helpful for analysing the relation (\ref{equation sandwiching key}). 

\begin{Lemma}
\label{Lemma bounding key error term}
Let $\beta \in (0, 3/4)$ and let $\eta >0$. Then, if $\eta\max(\beta^{-1}, (3/4 - \beta)^{-1})$ is small enough, the following holds: for almost all $\alpha \in [0,1]$, for all $s_1,t_1,s_2,t_2 \in \mathbb {R}$ for which $s_1 < t_1$ and $s_2 < t_2$, for all $N \in \mathbb{N}$ and for all $L \in \mathbb{R}$ in the range $N^{1 - \beta - \eta}<L<N^{1- \beta + \eta}$, and for all prime $q$ and $a/q$ satisfying \eqref{eq: diophantine approx} and \eqref{equation relationship of N and q}, we have
\begin{equation}
\label{saving in Delta non-zero}
\frac{1}{N}\sum\limits_{\substack{(r_1,r_2) \in S^+ \\ r_1r_2 \neq 0 \\ r_1 + r_2 \neq 0}} \Delta(N,q,\overline{a}r_1, \overline{a}r_2) \ll_{\alpha, \beta, \eta,\mathbf{s},\mathbf{t}} L^2 q^{-\eta},
\end{equation}
\noindent  where $S^{+}$ is as defined in \eqref{eq S plus}.  
\end{Lemma}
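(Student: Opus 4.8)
The plan is to reduce the claimed metric statement to an application of Lemma \ref{Lemma technical estimate} combined with a Borel--Cantelli argument over a suitable countable set of scales. First I would fix $\beta$, $\eta$ (with $\eta \max(\beta^{-1}, (3/4-\beta)^{-1})$ small), and dispense with the $\alpha$-dependence: by Lemma \ref{Lemma technical estimate} (which has nothing to do with $\alpha$), for any odd prime $q$ and any $M < q$ we have $\sum_{c_1,c_2 \leqslant q} \Delta(M,q,c_1,c_2)^2 \ll (\log q)^3 M^3 + (\log q)^6 q^2$. The quantity on the left-hand side of \eqref{saving in Delta non-zero} is a sum over at most $|S^+| \ll (qL/N + N^2/q^{1-\eta})^2$ pairs $(r_1,r_2)$, each contributing $\Delta(N,q,\overline{a}r_1,\overline{a}r_2)$ with $N < q$ (this uses \eqref{equation relationship of N and q}, since $q \asymp N^{(2+\beta)/2 + 10\eta} \gg N$). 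The key point, flagged in the Remark after \eqref{eq S plus}, is that both $qL/N$ and $N^2/q^{1-\eta}$ are of size $\approx q^{(2-\beta)/(2+\beta)}$, so $|S^+| \ll q^{2(2-\beta)/(2+\beta) + O(\eta)}$, which is of size $\approx N^{2-\beta + O(\eta)} \approx L^2 N^{O(\eta)}$.

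Next I would apply Cauchy--Schwarz to the sum over $(r_1,r_2) \in S^+$:
\[
\sum_{(r_1,r_2) \in S^+} \Delta(N,q,\overline{a}r_1,\overline{a}r_2) \leqslant |S^+|^{1/2} \Big( \sum_{(r_1,r_2) \in S^+} \Delta(N,q,\overline{a}r_1,\overline{a}r_2)^2 \Big)^{1/2}.
\]
Since $a$ is invertible mod $q$, the map $(r_1,r_2) \mapsto (\overline{a}r_1, \overline{a}r_2)$ is a bijection of $(\mathbb{Z}/q\mathbb{Z})^2$, so the inner sum is bounded by $\sum_{c_1,c_2 \leqslant q} \Delta(N,q,c_1,c_2)^2 \ll (\log q)^3 N^3 + (\log q)^6 q^2$. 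Feeding in $|S^+| \ll q^{2(2-\beta)/(2+\beta)+O(\eta)}$ and the relations $q \asymp N^{(2+\beta)/2 + 10\eta}$, $L \asymp N^{1-\beta+O(\eta)}$, one has to check that
\[
\frac{1}{N} \, q^{(2-\beta)/(2+\beta) + O(\eta)} \big( N^{3/2} + q \big) (\log q)^3 \ll L^2 q^{-\eta}
\]
reduces to a polynomial inequality in the exponents of $N$ that holds with room to spare once $\eta$ is small compared to $\beta$ and $3/4 - \beta$; this is where the constraint $\beta < 3/4$ enters (the $N^{3/2}$ term forces it) and where the log factors are absorbed into $q^{\eta/2}$, say. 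This yields, for \emph{every} admissible triple $(N, L, q)$ and every $\alpha$ admitting such an approximation, the bound \eqref{saving in Delta non-zero} with an absolute implied constant — so in fact no genuine metric input beyond the existence statement of Section \ref{Section: approximation with prime denominator} seems needed here, and the "for almost all $\alpha$" and the $\alpha$-dependence of the implied constant are vestigial, present only to match the hypotheses under which a valid pair $(a,q)$ exists.

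The main obstacle I anticipate is bookkeeping rather than conceptual: one must track the $O(\eta)$ slack through the exponent arithmetic carefully enough to confirm that \emph{both} terms on the right of \eqref{eq: technical estimate} — after multiplication by $|S^+|^{1/2}/N$ — lose a clean power $q^{-\eta}$ (not merely $q^{-c\eta}$ with an awkward constant), and to make sure the logarithmic factors never overwhelm this gain. A secondary subtlety is that $S^+$ may contain pairs with $r_i = 0$ or $r_1 + r_2 = 0$; these are harmless to include when proving an upper bound, so I would simply drop the side conditions $r_1 r_2 \neq 0$, $r_1 + r_2 \neq 0$ and bound the full sum over $S^+$. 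One also needs $N < q$ to invoke Lemma \ref{Lemma technical estimate}, which is immediate from \eqref{equation relationship of N and q} since $(2+\beta)/2 > 1$; and $q$ odd, which holds as $q$ is a large prime. With these checks in place the lemma follows.
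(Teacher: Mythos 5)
Your proposal has a genuine gap, and unfortunately it is at the critical step. Applying Cauchy--Schwarz to the $\alpha$-fixed sum over $S^+$ and then bounding the inner sum by the \emph{full} sum $\sum_{c_1,c_2\leqslant q}\Delta(N,q,c_1,c_2)^2$ is exactly the wasteful move that caps the method at $\beta<1/3$, not $\beta<3/4$. Carrying out the exponent arithmetic you describe: with $q\asymp N^{(2+\beta)/2}$ and $L\asymp N^{1-\beta}$, one has $\vert S^+\vert^{1/2}\asymp q^{(2-\beta)/(2+\beta)}\asymp N^{(2-\beta)/2}$, and $(\sum_{c_1,c_2}\Delta^2)^{1/2}\ll q^{o(1)}(N^{3/2}+q)$, so the left side of \eqref{saving in Delta non-zero} is $\ll N^{(3-\beta)/2+o(1)}+N^{1+o(1)}$. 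Requiring the first term to be $\ll L^2\asymp N^{2-2\beta}$ forces $(3-\beta)/2\leqslant 2-2\beta$, i.e. $\beta\leqslant 1/3$. So this route gives $L>N^{2/3}$, which is worse than even the Rudnick--Sarnak--Zaharescu threshold $L>N^{3/5}$, let alone the paper's $L>N^{1/4}$.

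The obstruction is that $S^+$ has only $\approx q^{2(2-\beta)/(2+\beta)}$ elements, which is much smaller than $q^2$ once $\beta>0$, so enlarging the inner sum to all of $(\mathbb{Z}/q\mathbb{Z})^2$ throws away the crucial sparsity of the set $\{(\overline{a}r_1,\overline{a}r_2):(r_1,r_2)\in S^+\}$. The paper's actual proof does not do this and is genuinely metric: it sums $\Delta^*_\beta(q,\overline{a}r_1,\overline{a}r_2)$ over $(r_1,r_2)$ \emph{and} over $a$, introduces the multiplicity $f_{\beta,\eta}(q,c_1,c_2)$ counting representations $(c_1,c_2)=(\overline{a}r_1,\overline{a}r_2)$, bounds $\sum f^2$ in Lemma~\ref{Lemma bound on f squared}, and then applies Cauchy--Schwarz in the form $\sum f\Delta^*\leqslant(\sum f^2)^{1/2}(\sum(\Delta^*)^2)^{1/2}$. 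This yields a bound on the size of the exceptional set $\operatorname{Bad}_{\beta,\eta}(q)\subset(\mathbb{Z}/q\mathbb{Z})^\times$, which is $\ll q^{1-2\eta}$ precisely when $\beta<3/4$, and the lemma follows by Borel--Cantelli. So the ``for almost all $\alpha$'' is essential, not vestigial: there is no uniform-in-$\alpha$ bound available by this method, and the extra averaging over $a$ is exactly what buys the improvement from $1/3$ to $3/4$.

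A secondary issue your proposal sidesteps (and which would resurface if you tried to repair it along the paper's lines): because the bad set must be defined purely in terms of $(a,q)$ to run Borel--Cantelli over a countable collection of convergents, one cannot work with $\Delta(N,q,\cdot,\cdot)$ directly, since $N$ varies with $L$. The paper therefore passes to the maximal function $\Delta^*_\beta(q,c_1,c_2)=\max_{M\leqslant q^{2/(2+\beta)}}\vert\Delta(M,q,c_1,c_2)\vert$ and proves a separate bound (Lemma~\ref{Lemma uniform}) for $\sum(\Delta^*_\beta)^2$, at the cost of a further power of $q$ over Lemma~\ref{Lemma technical estimate}.
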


At this point it is worth us taking a small diversion from the main proof to discuss the numerology in Lemma \ref{Lemma technical estimate}, and why the bound is close to best-possible. We begin with an easy lemma concerning $A_0(q,c_1,c_2)$ itself. A more sophisticated version of this lemma
was worked out by Kurlberg and Rudnick in \cite[Prop. 4]{KR99}, but to make our exposition self-contained we decided to include a direct proof for the triple correlation case.

\begin{Lemma}\DIFaddbegin \label{lem: approx size of A_not}
\DIFaddend Let $q$ be an odd prime. If $c_1, c_2$ are not both
divisible by $q$, then
\[ A_0(q,c_1,c_2) = \begin{cases}
q + O(\sqrt{q}) & \text{if } c_1 + c_2 \not\equiv 0 \, (\text{mod }q); \\  2q-1 - \Big( \frac{-c_2}{q}\Big) & \text{if } c_1 \equiv 0 \, (\text{mod } q)\\
2q - 1 - \Big( \frac{c_1}{q}\Big) & \text{if } c_2 \equiv 0 \, (\text{mod } q) \\
 2q - 1 - \Big(\frac{c_2}{q}\Big) & \text{if } c_1 + c_2 \equiv 0 \, (\text{mod }q)\end{cases}  \]
where $(\frac{\cdot}{q})$ denotes the Legendre symbol modulo $q$. Further, $A_0(q,0,0)=4q-3$.
\end{Lemma}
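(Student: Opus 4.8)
The plan is to count $A_0(q,c_1,c_2)$ directly by reparametrising the quadratic congruences. Write $u = x^2$, $v = y^2$, $w = z^2$, so that the system becomes $u - v \equiv c_1$, $v - w \equiv c_2 \pmod q$. Given $v$, this determines $u$ and $w$ uniquely; and the number of $x$ with $x^2 \equiv u$ is $1 + \left(\frac{u}{q}\right)$ (interpreting $\left(\frac{0}{q}\right)=0$), similarly for $w$ and $v$. Hence
\[
A_0(q,c_1,c_2) = \sum_{v \bmod q} \Bigl(1 + \Bigl(\tfrac{v}{q}\Bigr)\Bigr)\Bigl(1 + \Bigl(\tfrac{v+c_1}{q}\Bigr)\Bigr)\Bigl(1 + \Bigl(\tfrac{v-c_2}{q}\Bigr)\Bigr),
\]
where I have substituted $u = v + c_1$ and $w = v - c_2$. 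Expanding the product gives eight terms: the constant term contributes $q$; the single-character sums $\sum_v \left(\frac{v}{q}\right)$, $\sum_v \left(\frac{v+c_1}{q}\right)$, $\sum_v \left(\frac{v-c_2}{q}\right)$ each vanish (complete character sums over a full residue system); the three ``pairwise'' sums are sums of Jacobi-type form $\sum_v \left(\frac{(v+a)(v+b)}{q}\right)$; and the final ``triple'' term is $\sum_v \left(\frac{v(v+c_1)(v-c_2)}{q}\right)$.

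For the pairwise sums, the standard evaluation $\sum_{v \bmod q}\left(\frac{(v+a)(v+b)}{q}\right) = -1$ when $a \not\equiv b$ and $= q-1$ when $a \equiv b \pmod q$ handles every case. So for instance the pair coming from the factors $\left(1+\left(\frac{v}{q}\right)\right)\left(1+\left(\frac{v+c_1}{q}\right)\right)$ contributes $-1$ unless $c_1 \equiv 0$, in which case it contributes $q-1$; similarly the $(v, v-c_2)$ pair degenerates when $c_2 \equiv 0$, and the $(v+c_1, v-c_2)$ pair degenerates when $c_1 + c_2 \equiv 0$. This immediately separates the four stated cases. In the generic case $c_1, c_2, c_1+c_2$ all nonzero mod $q$, all three pairwise sums equal $-1$ and the triple term is $\sum_v \left(\frac{v(v+c_1)(v-c_2)}{q}\right)$, which is a character sum against a genuine cubic (the three roots $0, -c_1, c_2$ are distinct), hence $O(\sqrt q)$ by Weil (or elementarily by the Hasse bound for the elliptic curve $Y^2 = X(X+c_1)(X-c_2)$, or even just cited as a standard cubic character-sum estimate). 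Collecting: $q + 0 + 0 + 0 + (-1) + (-1) + (-1) + O(\sqrt q) = q - 3 + O(\sqrt q) = q + O(\sqrt q)$.

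In the degenerate cases one of the three pairwise sums jumps from $-1$ to $q-1$, adding $q$, and one must re-examine the triple term since it is no longer a cubic with distinct roots. Take $c_1 \equiv 0$: the triple sum is $\sum_v \left(\frac{v^2(v-c_2)}{q}\right) = \sum_{v \not\equiv 0}\left(\frac{v-c_2}{q}\right) = -\left(\frac{-c_2}{q}\right)$ (remove the $v\equiv0$ term from a complete sum, which itself is $0$), and the three pairwise sums are $q-1, -1, -1$, giving total $q + (q-1) - 1 - 1 - \left(\frac{-c_2}{q}\right) = 2q - 3 - \left(\frac{-c_2}{q}\right)$; hmm, I should double-check the constant against the claimed $2q - 1 - \left(\frac{-c_2}{q}\right)$ — the discrepancy of $2$ presumably comes from correctly accounting for which pairwise sums degenerate (if $c_1 \equiv 0$ then also the $(v+c_1, v)$ and $(v+c_1,v-c_2)$-type coincidences must be tracked), so the \textbf{main bookkeeping obstacle} is being scrupulous about which of the three pairwise sums degenerate simultaneously in each boundary case. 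The cases $c_2 \equiv 0$ and $c_1 + c_2 \equiv 0$ are symmetric substitutions ($v \mapsto v + c_1$ or $v \mapsto -v$) of this computation, yielding $\left(\frac{c_1}{q}\right)$ and $\left(\frac{c_2}{q}\right)$ respectively. Finally $A_0(q,0,0)$ counts triples with $x^2 \equiv y^2 \equiv z^2$: for each of the $q$ values of $y^2 = t$, there are $\left(1 + \left(\frac{t}{q}\right)\right)^3$ choices summed, i.e. $\sum_t (1 + \left(\frac{t}{q}\right))^3 = \sum_t (1 + 3\left(\frac{t}{q}\right) + 3\left(\frac{t}{q}\right)^2 + \left(\frac{t}{q}\right)^3)$; using $\left(\frac{t}{q}\right)^2 = 1$ for $t \neq 0$ and $\left(\frac{t}{q}\right)^3 = \left(\frac{t}{q}\right)$, this is $q + 0 + 3(q-1) + 0 = 4q - 3$, as claimed. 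I will present the generic case in full, then indicate the three degenerate cases via the symmetry substitutions, taking care with the pairwise-sum bookkeeping.
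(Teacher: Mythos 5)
Your approach is genuinely different from the paper's: you reparametrise directly via $v=y^{2}$, so that
\[
A_{0}(q,c_{1},c_{2})=\sum_{v\bmod q}\Bigl(1+\Bigl(\tfrac{v}{q}\Bigr)\Bigr)\Bigl(1+\Bigl(\tfrac{v+c_{1}}{q}\Bigr)\Bigr)\Bigl(1+\Bigl(\tfrac{v-c_{2}}{q}\Bigr)\Bigr),
\]
and then expand the eightfold product, whereas the paper opens with additive characters in $j,k$, evaluates the resulting Gauss sums, and reduces to the single-variable sum $\sum_{\ell}\bigl(\tfrac{\ell(\ell-1)(\ell c_{2}+c_{1})}{q}\bigr)$. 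Your method is cleaner and more elementary: the four cases of the lemma correspond exactly to which of the three ``pairwise'' sums degenerates from $-1$ to $q-1$, and the remaining cubic character sum is handled by Weil in the generic case and by removing a single term from a complete sum in each degenerate case. The identity $\sum_{v}\bigl(\tfrac{(v+a)(v+b)}{q}\bigr)=-1$ (for $a\not\equiv b$) and $=q-1$ (for $a\equiv b$) does all the bookkeeping you were worried about.

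Now the important point: the discrepancy of $2$ that you flagged is \emph{not} an error in your bookkeeping --- it is an error in the paper. Your computation $A_{0}(q,0,c_{2})=2q-3-\bigl(\tfrac{-c_{2}}{q}\bigr)$ is correct, and the same shift applies in the other two degenerate cases: the constants should read $2q-3-(\cdot)$ rather than $2q-1-(\cdot)$. One can confirm this by direct enumeration: for $q=3$ and $(c_{1},c_{2})=(0,1)$, the pairs $(y,z)$ with $y^{2}-z^{2}\equiv1$ are $(1,3)$ and $(2,3)$, each admitting $x\in\{1,2\}$, giving $A_{0}(3,0,1)=4=2\cdot3-3-\bigl(\tfrac{-1}{3}\bigr)$, whereas the paper's formula would predict $6$. (The bug in the paper's own proof is the line where the contribution from $j=0$, $k=0$, $j=k$ is asserted to equal $q+(q-1)(1_{q\mid c_{1}}+1_{q\mid c_{2}}+1_{q\mid(c_{1}+c_{2})})$; the correct evaluation of $N_{1}+N_{2}+N_{3}-2q$ with $N_{i}=q-1+q\cdot1_{q\mid c_{i}}$ is $q-3+q(1_{q\mid c_{1}}+1_{q\mid c_{2}}+1_{q\mid(c_{1}+c_{2})})$, which matches yours in every case including $A_{0}(q,0,0)=4q-3$.) The error is harmless downstream, since the paper only ever uses $A_{0}\asymp q$; but you should state $2q-3-(\cdot)$ with confidence rather than trying to make your (correct) calculation agree with the (incorrect) displayed formula. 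One small caution when you ``indicate the degenerate cases via symmetry'': the relevant symmetry is $(x,y,z)\mapsto(z,y,x)$, which sends $(c_{1},c_{2})$ to $(-c_{2},-c_{1})$, not to $(c_{2},c_{1})$; this is why the $c_{2}\equiv0$ case produces $\bigl(\tfrac{c_{1}}{q}\bigr)$ rather than $\bigl(\tfrac{-c_{1}}{q}\bigr)$, consistent with your direct computation of the triple sum.
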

 \emph{Remark}: For this paper, it would have been enough to consider only the non-degenerate case of Lemma \ref{lem: approx size of A_not}. However, the arguments of Section \ref{Section: Proof of Lemma technical estimate} are cleaner if we allow ourselves to include the degenerate cases, which is the reason why we do so. 
\begin{proof}
In the trivial case $c_1\equiv c_2 \equiv 0 \,(\text{mod }q)$,
we note that $y=\pm x \,(\text{mod }q)$ and 
$z= \pm y \,(\text{mod }q)$. So, except for when $x=q$, there are $4$ choices for 
$y,z$ given a fixed $x$, thus showing that $A_q(0,0) = 4(q-1) + 1 = 4q-3$ as claimed.

Thus we assume in the following that at least one of $c_1,c_2$ 
is not divisible by $q$. We have that 
\begin{align*}
A_0(q,c_1,c_2) &= \frac{1}{q^2}\sum\limits_{\substack{0 \leqslant j,k \leqslant q-1 \\ 0\leqslant x,y,z \leqslant q-1}}e_q(j(x^2 - y^2 - c_1) + k(y^2 - z^2 - c_2)) 
\end{align*}
The terms when $j = 0$, $k=0$, or $j=k$ contribute
\[ \frac{1}{q^2}\Big(q^2 \sum\limits_{ \substack{0 \leqslant x,y \leqslant q-1 \\ y^2 \equiv x^2 - c_1 \, (\text{mod }q)}} 1  + q^2 \sum\limits_{ \substack{0 \leqslant y,z \leqslant q-1 \\ y^2 \equiv z^2 +c_2 \, (\text{mod }q)}} 1 + q^2 \sum\limits_{ \substack{0 \leqslant x,z \leqslant q-1 \\ x^2 = z^2 + c_1 + c_2}}1 - 2q^3\Big), \] where the final term is used to correct for the overcounting of $(j,k) = (0,0)$. By factorising the ranges of summation using the difference of two squares, this  expression is equal to  \[ q + (q-1)(1_{q\vert c_1}+1_{q \vert c_2}+1_{q\vert(c_1+c_2)}).\]

Recall the Gauss sum evaluation 
\begin{equation}
\label{Gauss sum}
\sum\limits_{ 0 \leqslant x \leqslant q-1} e_q(j x^2) = \Big(\frac{j}{q}\Big) \varepsilon_q \sqrt{q},
\end{equation} when $q \nmid j$ and where $\varepsilon_q = 1$ if $q \equiv 1 \, (\text{mod }4)$ and $\varepsilon_q = i$ if $q \equiv 3 \, (\text{mod }4)$, see \mbox{
\cite[Thm. 3.3]{Iw04}}. Therefore, the contribution from the remaining frequencies $j,k$ is exactly \[ \frac{\varepsilon_q^3}{q^{1/2}}\sum\limits_{ \substack{1 \leqslant j,k \leqslant q-1 \\ j \neq k}} \Big( \frac{j}{q}\Big) \Big( \frac{k-j}{q}\Big) \Big(\frac{-k}{q}\Big) e_q(-jc_1) e_q(-k c_2). \] By introducing a change of variables $k = lj$, 
this expression is equal to 
\begin{equation}
\label{before alternative Gauss sum}\frac{\varepsilon_q^3}{q^{1/2}}\Big(\frac{-1}{q}\Big) 
\sum\limits_{ 2 \leqslant l \leqslant q-1} 
\Big(\frac{l-1}{q}\Big) \Big( \frac{l}{q}
\Big) \sum\limits_{1 \leqslant j 
\leqslant q-1}\Big(\frac{j}{q}\Big) e_q(j(-c_1 - lc_2)).
\end{equation} 
One can evaluate the inner sum of (\ref{before alternative Gauss sum}) using (\ref{Gauss sum}). Indeed, for an arbitrary integer $m$ and an arbitrary quadratic non-residue $h$ we have
\begin{align*}
\sum\limits_{ 1 \leqslant j \leqslant q-1} \Big(\frac{j}{q}\Big) e_q(jm) &= \frac{1}{2}\Big(\sum\limits_{1 \leqslant n \leqslant q-1} e_q(mn^2) - \sum\limits_{1 \leqslant n \leqslant q-1} e_q(mn^2h)\Big)\\
& = \frac{1}{2}\Big(\sum\limits_{0 \leqslant n \leqslant q-1} e_q(mn^2) - \sum\limits_{0 \leqslant n \leqslant q-1} e_q(mn^2h)\Big) \\
&= \frac{1}{2}\Big(\Big(\frac{m}{q}\Big) - \Big(\frac{hm}{q}\Big)\Big)\varepsilon_q \sqrt{q}\\
& = \Big(\frac{m}{q}\Big) \varepsilon_q \sqrt{q}.
\end{align*}
Plugging this expression into (\ref{before alternative Gauss sum}) we establish that (\ref{before alternative Gauss sum}) is equal to 
\[ \sum\limits_{2 \leqslant l \leqslant q-1}
\Big(\frac{l(l-1)(lc_2 + c_1)}{q}\Big).
\] This is $O(\sqrt{q})$ 
by Hasse (see \mbox{
\cite[(14.32)]{Iw04}}\hspace{0pt}), 
provided that neither $q \vert c_1$, $q\vert c_2$, nor $q \vert (c_1 + c_2)$. 

In the singular cases, if $q$ divides $c_1$ we end up with \[\Big(\frac{c_2}{q}\Big) \sum\limits_{ 2 \leqslant l \leqslant q-1} \Big( \frac{l-1}{q}\Big),\] which is equal to $-(\frac{-c_2}{q}) $. If $q \vert (c_1 + c_2)$, we end up with 
\[ \Big(\frac{c_2}{q}\Big) \sum\limits_{ 
2 \leqslant l \leqslant q-1} \Big(\frac{l}{q}\Big),\] 
which is equal to $- (\frac{c_2}{q})$. The final case, when $q \vert c_2$, follows easily from the case $q \vert c_1$ after permuting the variables $x,y,z$ in the original expression for $A_0(q,c_1,c_2)$. 
\end{proof}

Therefore $(M/q)^3 A_0(q,c_1,c_2) \asymp M^3 q^{-2}$, and this is the expected size of $A(M,q,c_1,c_2)$. We note, then, that the $M^3$ term in (\ref{eq: technical estimate}) represents `square-root cancellation on average' for the size of $\Delta(M,q,c_1,c_2)$. 

Moreover, Lemma \ref{Lemma technical estimate} is close to best possible, at least for certain ranges of $M$. We would like to 
emphasise that here and throughout, 
$M$ denotes a positive integer. Indeed, if $M  < \lambda q^{2/3}$
and $\lambda$ is a suitably small constant, then 
we have the matching lower bound \begin{equation}
\label{lower bound on variance}\sum\limits_{c_1,c_2 \leqslant q} \Delta(M,q,c_1,c_2)^2 \gg M^3.
\end{equation}

\begin{proof}[Proof of (\ref{lower bound on variance})]
Certainly \[ \sum\limits_{c_1,c_2 \leqslant q} A(M,q,c_1,c_2) = M^3.\] Furthermore, \[ 
\sum\limits_{c_1,c_2 \leqslant q} A(M,q,c_1,c_2)^2 = \sum\limits_{\substack{x,y,z,x^\prime,y^\prime,z^\prime \leqslant M\\ x^2 - (x^\prime)^2 \equiv y^2 - (y^\prime)^2 \equiv z^2 - (z^\prime)^2 \, (\text{mod }q)}} 1.\] By using the divisor bound to control the terms arising when $x^2 - (x^\prime)^2 \neq 0$, this sum is at most $O(q^{o(1)}M^2 q^{-1} + M^3)$, which is certainly at most $O(M^3)$. 

Therefore, by Cauchy-Schwarz 
\[\sum\limits_{c_1,c_2 \leqslant q} 1_{A(M,q,c_1,c_2) \geqslant 1} \geqslant  \Big(\sum\limits_{c_1,c_2 \leqslant q} A(M,q,c_1,c_2) \Big)^2\Big(\sum\limits_{c_1,c_2 \leqslant q} A(M,q,c_1,c_2)^2 \Big)^{-1} \gg M^3.\] If $Mq^{-2/3}$ if sufficiently small then $(M/q)^3A_0(q,c_1,c_2) < 1/2$ for all $c_1,c_2$. Hence \[ \sum\limits_{ c_1,c_2 \leqslant q} 
\Delta(M,q,c_1,c_2)^2 \gg \sum\limits_{c_1,c_2 \leqslant q} 
1_{A(M,q,c_1,c_2) \geqslant 1} \gg M^3\] as claimed. 
\end{proof}

Rudnick--Sarnak--Zaharescu \cite{RSZ01} 
also considered $\Delta(M,q,c_1,c_2)$. 
By Fourier-expanding the cut-off $x,y,z \leqslant M$, they derived
\begin{align*}
\Delta(M,q,c_1,c_2) &\leqslant  \sum\limits_{ \substack{0 \leqslant b_1,b_2,b_3 \leqslant q-1 \\ \mathbf{b} \neq \mathbf{0}}} \prod\limits_{i=1}^3 \vert \widehat{1}_{[M]}(b_i)\vert \Big\vert \sum\limits_{\substack{0 \leqslant x,y,z \leqslant q-1 \\ x^2 - y^2 \equiv c_1 \, (\text{mod } q) \\ y^2 - z^2 \equiv c_2 \, (\text{mod } q)}} e_q(b_1x + b_2y + b_3 z)\Big\vert,
\end{align*} where 
\begin{equation}
\label{fourier transform} \widehat{1_{[M]}}(b) = \frac{1}{q} \sum\limits_{x \leqslant M} e_q(-bx).
\end{equation}
\noindent In expression (9.15) of \cite{RSZ01} they used the Weil bound\footnote{For triple correlations the relevant curve has genus 1, so this is in fact the same Hasse bound as we used in Lemma \ref{lem: approx size of A_not}.}, ending up with a bound of 
\begin{equation}
\label{eq: Weil bound}
\Delta(M,q,c_1,c_2) \ll q^{1/2} (\log q)^3,
\end{equation} in the non-degenerate cases. Comparing this result to Lemma \ref{Lemma technical estimate}, estimate (\ref{eq: Weil bound}) implies
\begin{equation}
\label{weak bound} \sum\limits_{c_1,c_2 \leqslant q} \Delta(M,q,c_1,c_2)^2 \ll q^{3 + o(1)}, 
\end{equation} 
\noindent which is weaker than Lemma \ref{Lemma technical estimate}. We note, therefore, that the proof of Lemma \ref{Lemma technical estimate} must utilise the extra averaging in $c_1$ and $c_2$ in a critical way. 

In the introduction we promised to explain how the threshold $N^{3/5}$ in Theorem \ref{Theorem method of RSZ} arises from the arguments of \cite{RSZ01}, and now seems to be an appropriate moment. Indeed, in order to analyse (\ref{equation sandwiching key}), it would be enough to show that \[\Delta(N,q,\overline{a} r_1,\overline{a}r_2) = o((N/q)^3A_0(q,\overline{a} r_1,\overline{a}r_2)),\] since then one could replace $A(N,q,\overline{a} r_1,\overline{a}r_2)$ with $(N/q)^3A_0(q,\overline{a} r_1,\overline{a}r_2)$ (and then evaluate these latter terms explicitly using Lemma \ref{lem: approx size of A_not}). Using the bound (\ref{eq: Weil bound}), this is only possible when $N^3 q^{-2}>q^{1/2}(\log q)^3$, i.e. provided that $N\geqslant q^{5/6 +o(1)}$. This, one notes, is the same as the threshold from Theorem 4 of \cite{RSZ01} taken with $m=3$ (for triple correlations). Noting that $N \approx q^{\frac{2}{2+\beta}}$, this approach succeeds provided that $2/(2+\beta) > 5/6$, i.e. provided that $\beta < 2/5$. From the definition of $\beta$, this implies that $ L$ must be at least $N^{3/5}$. \\



Having finished our diversion on the subject of Lemma \ref{Lemma technical estimate} (whose proof is deferred to Section \ref{Section: Proof of Lemma technical estimate}), let us return to the main argument, namely the proof of Theorem \ref{Theorem main theorem step function constrained}. From now on, we assume $\alpha$ satisfies Lemma \ref{Lemma bounding key error term}. Putting this information into expression (\ref{equation sandwiching key}), we derive
\begin{align}
\label{equation sandwiching key replaced}
N^{2}q^{-3}\sum\limits_{\substack{  (r_1,r_2) \in S^{-} \\ r_1r_2 \neq 0 \\ r_1 + r_2 \neq 0}} A_0(q,\overline{a} r_1,\overline{a} r_2) - O_{\alpha,\beta,\eta, \mathbf{s}, \mathbf{t}}(L^2q^{-\eta}) \leqslant R_3^\prime(\alpha,L,N, g_{\mathbf{s},\mathbf{t}}) \nonumber \\  \leqslant N^{2}q^{-3} \sum\limits_{\substack{  (r_1,r_2) \in S^{+} \\ r_1r_2 \neq 0 \\ r_1 + r_2 \neq 0}}A_0(q,\overline{a} r_1,\overline{a} r_2) + O_{\alpha,\beta,\eta, \mathbf{s}, \mathbf{t}}(L^2q^{-\eta}) .
\end{align}

To estimate the terms in the expression (\ref{equation sandwiching key replaced}), we use Lemma \ref{lem: approx size of A_not}. Indeed
\begin{align}
\label{sum of A0}
N^2 q^{-3} \sum\limits_{\substack{  (r_1,r_2) \in S^{\pm} \\ r_1r_2 \neq 0 
\\ r_1 + r_2 \neq 0}}A_0(q,\overline{a} r_1,\overline{a} r_2) &= N^2 q^{-3} \sum\limits_{\substack{  (r_1,r_2) \in S^{\pm} \\ r_1r_2 \neq 0 \\ r_1 + r_2 \neq 0}} (q + O(q^{1/2})).
\end{align}
The size of $S^{\pm}$ is given by
$$
\Big( (t_1 - s_1) \frac{qL}{N} \pm 2 \frac{N^2}{q^{1-\eta}} + O(1)\Big)\Big( (t_2 - s_2) \frac{qL}{N} \pm 2 \frac{N^2}{q^{1-\eta}} + O(1)\Big).
$$
From relation \eqref{equation relationship of N and q}, one observes that \[N^2 q^{-1 + \eta} = o_{\beta,\eta}\Big(\frac{qL}{N}\Big)\] as $N \rightarrow \infty$, and therefore the size of $S^{\pm}$ is seen to be
\begin{equation}
\label{size of Spm}
\vert S^{\pm}\vert = 
(1+ o_{\beta,\eta, \mathbf{s}, \mathbf{t}}(1))(t_1 - s_1)  (t_2 - s_2) q^2 L^2 N^{-2}
\end{equation} as $N \rightarrow \infty$. The estimate (\ref{size of Spm}) remains after we remove those pairs $(r_1,r_2) \in S^{\pm}$ with $r_1 r_2 = 0$ or $r_1 + r_2 = 0$. 

Thus, returning to (\ref{sum of A0}), we conclude that
\begin{align*}
N^2 q^{-3} \sum\limits_{\substack{  (r_1,r_2) \in S^{\pm} \\ r_1r_2 \neq 0 
\\ r_1 + r_2 \neq 0}}A_0(q,\overline{a} r_1,\overline{a} r_2)
= (1 + o_{\beta, \eta, \mathbf{s}, \mathbf{t}}(1))L^{2} 
(t_1 - s_1)(t_2 - s_2).
\end{align*}
Substituting this estimate into (\ref{equation sandwiching key replaced}), we derive Theorem \ref{Theorem main theorem step function constrained} as required. \qed  \\

What remains is to verify that almost all $\alpha$ admit an approximation $a/q$ of the form required in \eqref{eq: diophantine approx} and \eqref{equation relationship of N and q}, and to prove Lemma \ref{Lemma technical estimate} and Lemma \ref{Lemma bounding key error term}. \\

\section{Approximation with prime denominator}
\label{Section: approximation with prime denominator}
To derive a suitable approximation $a/q$ with prime denominator, we use the following 
quantitative version of (a generalized) 
Khintchine's theorem, due to Harman:

\begin{Theorem}
{\mbox{
\cite[Thm. 4.2]{Ha98}}\hspace{0pt}
}
Let $\psi:\mathbb{N}\rightarrow\left(0,1\right)$ be
a non-increasing function such that 
\[\Psi\left(N\right)=\sum_{n\leq N}\psi\left(n\right)
\]is unbounded. For $\mathcal{B}$ an infinite set of integers, let
$S\left(\mathcal{B},\alpha,N\right)$ denote the number of $n\leq N$,
with $n\in\mathcal{B}$, such that $\left\Vert n\alpha\right\Vert <\psi\left(n\right)$.
Then, for almost all $\alpha$, we have 
\begin{equation}
S\left(\mathcal{B},\alpha,N\right)=2\Psi(N,\mathcal{B})+O_{\varepsilon}(\left(\Psi(N)\right)^{\frac{1}{2}}(\log\Psi(N))^{2+\varepsilon})\label{eq: asymptotic formula}
\end{equation}
for each $\varepsilon>0$ where \[\Psi\left(N,\mathcal{B}\right)=\sum_{n\in\mathcal{B}\cap\left[N\right]}\psi\left(n\right).\]
Moreover, the implied constant in (\ref{eq: asymptotic formula}) is uniform in $\alpha$.
\end{Theorem}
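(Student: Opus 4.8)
The plan is to run the classical second-moment (variance) method of metric Diophantine approximation, combined with a quantitative Borel--Cantelli (G\'al--Koksma) lemma to convert the variance estimate into an almost-everywhere asymptotic with an explicit error. For $n\in\mathbb{N}$ put $\chi_n(\alpha)=1_{\|n\alpha\|<\psi(n)}$, so that $S(\mathcal{B},\alpha,N)=\sum_{n\in\mathcal{B}\cap[N]}\chi_n(\alpha)$; since $0<\psi(n)<1$ one has $\int_0^1\chi_n(\alpha)\,d\alpha=2\psi(n)$, hence $\int_0^1 S(\mathcal{B},\alpha,N)\,d\alpha=2\Psi(N,\mathcal{B})$. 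This is the stated main term, so everything comes down to controlling the fluctuation $S(\mathcal{B},\alpha,N)-2\Psi(N,\mathcal{B})$ for almost every $\alpha$.

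The heart of the argument is a pair-correlation estimate. Writing $\chi_n(\alpha)=\phi_n(n\alpha)$ with $\phi_n$ the $1$-periodic indicator of the $\psi(n)$-neighbourhood of $\mathbb{Z}$, I would approximate $\phi_n$ above and below by trigonometric polynomials (Vaaler/Fej\'er smoothing) so as to work with Fourier coefficients satisfying $\widehat{\phi_n}(0)=2\psi(n)$ and $|\widehat{\phi_n}(k)|\ll\min\{\psi(n),|k|^{-1}\}$. For $m\neq n$, with $d=\gcd(m,n)$, orthogonality then gives
\[
\int_0^1\chi_m(\alpha)\chi_n(\alpha)\,d\alpha=4\psi(m)\psi(n)+O\Big(\sum_{j\geq 1}\min\big\{\psi(m),\tfrac{d}{nj}\big\}\min\big\{\psi(n),\tfrac{d}{mj}\big\}\Big).
\]
Summing over $m,n\in\mathcal{B}\cap(M,N]$, using that $\psi$ is non-increasing together with routine divisor-sum estimates, I expect a block variance bound of the shape
\[
\int_0^1\Big|\sum_{n\in\mathcal{B}\cap(M,N]}\big(\chi_n(\alpha)-2\psi(n)\big)\Big|^2 d\alpha\ll\big(\Psi(N,\mathcal{B})-\Psi(M,\mathcal{B})\big)+(\log\Psi(N))^{O(1)}\big(\Psi(N)-\Psi(M)\big),
\]
with an absolute implied constant, the diagonal terms $m=n$ producing the first summand and the off-diagonal ones the second.

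Feeding this block bound into a G\'al--Koksma / Erd\H{o}s--G\'al type quantitative Borel--Cantelli lemma --- applied along a dyadic sequence of values of $N$ and interpolated in between --- one obtains, for almost every $\alpha$,
\[
S(\mathcal{B},\alpha,N)=2\Psi(N,\mathcal{B})+O_\varepsilon\big(\Psi(N)^{1/2}(\log\Psi(N))^{2+\varepsilon}\big),
\]
and, crucially, the implied constant here depends only on $\varepsilon$ and on the absolute constant from the variance bound, hence is uniform in $\alpha$, in $\psi$, and in $\mathcal{B}$. (The unboundedness of $\Psi(N)$ is the standard non-degeneracy hypothesis, without which $S$ would be a.e.\ bounded by the easy half of Borel--Cantelli.)

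The main obstacle will be the combination of the pair-correlation summation in the second step and the logarithmic bookkeeping in the third: one must sum the off-diagonal error over $m,n$ so that the variance grows essentially linearly in the measure $\Psi$, up to a controlled power of $\log\Psi$, and then arrange the dyadic decomposition in the Borel--Cantelli step to lose only $(\log\Psi)^{2+\varepsilon}$ rather than a larger power. By contrast, restricting the summation index to the set $\mathcal{B}$ creates no extra difficulty whatsoever, since both the Fourier computation and the Borel--Cantelli lemma are insensitive to which subset of $[N]$ the indices range over.
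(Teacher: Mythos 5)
The paper does not contain a proof of this statement: it is quoted verbatim from Harman's monograph \cite[Thm.~4.2]{Ha98} and used as a black box in Section~\ref{Section: approximation with prime denominator} to produce the prime approximations $a/q$ of Lemma~\ref{lem: prime moduli}. So there is no internal proof against which to compare your argument.

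Your sketch is nonetheless the standard route to this result, and is essentially the one Harman follows: compute the mean, bound the block variance via a pair-correlation estimate, and upgrade to an almost-everywhere asymptotic through a quantitative Borel--Cantelli (G\'al--Koksma/Schmidt) lemma. Three remarks. First, the Vaaler/Fej\'er smoothing step is superfluous here: since one is computing an $L^2$ quantity, one may work directly with the exact Fourier coefficients of the periodic indicator $\phi_n$, using $\widehat{\phi_n}(0)=2\psi(n)$ and $|\widehat{\phi_n}(k)|\leqslant\min(2\psi(n),1/(\pi|k|))$; smoothing is needed for pointwise Erd\H{o}s--Tur\'an-type estimates but not for variance bounds. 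Second, your observation that the off-diagonal error must be controlled by the unrestricted measure $\Psi(N)-\Psi(M)$ rather than $\Psi(N,\mathcal{B})-\Psi(M,\mathcal{B})$ is exactly right --- one throws away the restriction $m,n\in\mathcal{B}$ when bounding the error --- and this is precisely why Harman's error term features $\Psi(N)$ while the main term features $\Psi(N,\mathcal{B})$. Third, the step you flag but do not execute is where the real work lies: summing the off-diagonal term and tracking how the resulting logarithmic loss propagates through the Borel--Cantelli lemma. To land precisely on $(\log\Psi(N))^{2+\varepsilon}$ one needs the off-diagonal variance to cost only a single power of $\log\Psi$ beyond the diagonal, after which a G\'al--Koksma-type lemma with exponent $3/2+\varepsilon$ gives $(\Psi\log\Psi)^{1/2}(\log\Psi)^{3/2+\varepsilon}\ll\Psi^{1/2}(\log\Psi)^{2+\varepsilon}$; your placeholder $(\log\Psi(N))^{O(1)}$ in the block variance must therefore be exactly $(\log\Psi(N))^{1}$, which is not automatic and is the substantive part of Harman's estimate. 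This is a gap in the detail rather than in the strategy, but it is the crux of the proof.
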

From this we deduce the following:
\begin{Lemma}\label{lem: prime moduli}
Let $\eta\in(0,1)$. Then, for almost
all $\alpha$, for all $N \geqslant N_0(\eta)$ there is a prime $q$ satisfying
\begin{equation}
\left\Vert q\alpha\right\Vert <\frac{1}{N^{1-\eta}},\quad\mathrm{and}\quad N\leqslant q\leq2N.\label{eq: prime denominator}
\end{equation}
\end{Lemma}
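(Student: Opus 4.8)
The plan is to invoke the quantitative Khintchine theorem of Harman quoted above with $\mathcal{B}$ equal to the set of primes and with the choice $\psi(n) = \tfrac12 n^{-(1-\eta)}$. This $\psi$ is non-increasing, takes values in $(0,1)$, and $\Psi(N) = \tfrac12\sum_{n\le N} n^{-(1-\eta)} \gg_\eta N^\eta$ is unbounded, so all hypotheses of the theorem hold. Fixing $\varepsilon = 1$, we obtain a full-measure set of $\alpha$ for which
\[ S(\mathcal{B},\alpha,N) = 2\Psi(N,\mathcal{B}) + O\big((\Psi(N))^{1/2}(\log\Psi(N))^{3}\big) \]
holds for all $N$, where $\Psi(N,\mathcal{B}) = \tfrac12\sum_{p\le N} p^{-(1-\eta)}$ and, crucially, the implied constant is absolute.

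I would then work in a dyadic block. The difference $S(\mathcal{B},\alpha,2N) - S(\mathcal{B},\alpha,N)$ counts exactly the primes $q \in (N,2N]$ with $\Vert q\alpha\Vert < \psi(q)$; for any such $q$ we have $\psi(q) = \tfrac12 q^{-(1-\eta)} \le \tfrac12 N^{-(1-\eta)} < N^{-(1-\eta)}$, so producing one such $q$ already proves the lemma. Subtracting the displayed asymptotic at $2N$ from the one at $N$ gives
\[ S(\mathcal{B},\alpha,2N) - S(\mathcal{B},\alpha,N) = 2\big(\Psi(2N,\mathcal{B}) - \Psi(N,\mathcal{B})\big) + O\big((\Psi(2N))^{1/2}(\log\Psi(2N))^{3}\big). \]
By the prime number theorem (or Bertrand's postulate together with the PNT) there are $\gg N/\log N$ primes in $(N,2N]$, each contributing at least $\tfrac12(2N)^{-(1-\eta)}$ to $\Psi(2N,\mathcal{B}) - \Psi(N,\mathcal{B})$; hence the main term is $\gg_\eta N^{\eta}/\log N$, while the error term is $\ll_\eta N^{\eta/2}(\log N)^{3}$. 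Since $\eta/2 < \eta$, the main term dominates, so $S(\mathcal{B},\alpha,2N) - S(\mathcal{B},\alpha,N) \to \infty$ as $N \to \infty$; in particular it is $\geqslant 1$ once $N$ exceeds some threshold $N_0$.

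The final point to check is that $N_0$ depends only on $\eta$ and not on $\alpha$. This is exactly why one uses the uniform-in-$\alpha$ form of Harman's estimate: the $\alpha$-dependence is entirely absorbed into the choice of the full-measure set, and the comparison of the main and error terms above — which is what pins down $N_0$ — involves only absolute constants and $\eta$. Beyond this piece of bookkeeping the argument is routine; the only genuinely substantive choices are the exponent $1-\eta$ in $\psi$ (so that $\psi(q)$ lies below the target $N^{-(1-\eta)}$ on the whole dyadic block) and the decision to difference the counting function across $[N,2N]$ rather than reading it at a single scale. I therefore expect no serious obstacle here.
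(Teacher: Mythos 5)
Your argument is correct and follows essentially the same route as the paper: invoke Harman's quantitative Khintchine theorem with $\mathcal{B}$ the primes and $\psi$ roughly $n^{-(1-\eta)}$, then difference the counting function across the dyadic block $[N,2N]$ so that the main term $\gg_\eta N^\eta/\log N$ (from PNT) swamps the error term $\ll_\eta N^{\eta/2}(\log N)^3$, with the uniformity in $\alpha$ of Harman's implied constant ensuring $N_0$ depends only on $\eta$. The factor $1/2$ in your choice of $\psi$ is not actually needed to force $\psi(q) \leq N^{-(1-\eta)}$ on $[N,2N]$ (that already holds with strict inequality in $\|q\alpha\|<\psi(q)$), though it does neatly guarantee $\psi(1)\in(0,1)$; the paper uses $\psi(n)=n^{-1+\eta}$ and computes the asymptotics $\Psi(N)\sim\eta^{-1}N^\eta$ and $\Psi(N,\mathcal{B})\sim\eta^{-1}N^\eta(\log N)^{-1}$ more explicitly, but the substance is identical.
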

\begin{proof}
We let $\mathcal{B}$ denote the set of primes, and $\psi\left(n\right)=n^{-1+\eta}$.
Then $\Psi(N) \sim \eta^{-1}N^\eta$ and (by the prime number theorem) $\Psi(N,\mathcal{B}) \sim \eta^{-1}N^{\eta}(\log N)^{-1}$. So, the asymptotic formula (\ref{eq: asymptotic formula}) shows that for almost all $\alpha$ one has
that 
\begin{align*}
S\left(\mathcal{B}, \alpha, N\right) & =\frac{2N^{\eta}\left(1+o\left(1\right)\right)}{\eta\log N}+O(\eta^{-1}N^{\frac{\eta}{2}}(\log N)^{3}),
\end{align*}
\noindent with the implied constant uniform in $\alpha$. From this it immediately follows that if $N$ is sufficiently large in terms of $\eta$ then \[S(\mathcal{B},\alpha,2N) - S(\mathcal{B},\alpha,N) >0,\] as required. \end{proof} Therefore an approximation $a/q$ may be found, with $q$ prime, that satisfies \eqref{eq: diophantine approx} and \eqref{equation relationship of N and q}. \\

\section{Proof of Lemma \ref{Lemma technical estimate}}\DIFaddbegin \label{Section: Proof of Lemma technical estimate}
\DIFaddend 

Expanding the square we have
\[ \sum\limits_{c_1,c_2 \leqslant q} \Delta(N,q,c_1,c_2)^2 = S_1 - 2S_2 + S_3,\] where
\begin{align*}
S_1 &= \sum\limits_{c_1,c_2 \leqslant q} A(M,q,c_1,c_2)^2, \\
S_2 & = \Big(\frac{M}{q}\Big)^3\sum\limits_{c_1,c_2 \leqslant q} A(M,q,c_1,c_2) A_0(q,c_1,c_2), \\
S_3 & = \Big(\frac{M}{q}\Big)^6\sum\limits_{c_1,c_2 \leqslant q} A_0(q,c_1,c_2)^2.
\end{align*}
\noindent We can write each $S_i$ as the number of solutions to certain equations, namely 

\begin{align*}
S_1& = \sum\limits_{\substack{1 \leqslant x,y,z \leqslant M \\ 1 \leqslant x^\prime, y^\prime, z^\prime \leqslant M \\ x^2 - y^2 \equiv  (x^{\prime})^{2} - (y^{\prime})^{2} \, (\text{mod } q) \\ y^2 - z^2 \equiv(y^{\prime})^2 - (z^{\prime})^2 \, (\text{mod } q)}} 1 ,\\
S_2& = \Big(\frac{M}{q}\Big)^3\sum\limits_{\substack{1 \leqslant x,y,z \leqslant M \\ 1\leqslant x^\prime, y^\prime, z^\prime \leqslant q \\ x^2 - y^2 \equiv (x^{\prime})^{2} - (y^{\prime})^{2}\, (\text{mod } q) \\ y^2 - z^2 \equiv(y^{\prime})^2 - (z^{\prime})^2\, (\text{mod } q)}} 1 ,\\
S_3& = \Big(\frac{M}{q}\Big)^6 \sum\limits_{\substack{1 \leqslant x,y,z \leqslant q \\ 1 \leqslant x^\prime, y^\prime, z^\prime \leqslant q \\ x^2 - y^2 \equiv (x^{\prime})^{2} - (y^{\prime})^{2} \, (\text{mod } q)\\ y^2 - z^2 \equiv (y^{\prime})^2 - (z^{\prime})^2 \, (\text{mod } q)}} 1 .\\
\end{align*}

Expanding the cut-offs $1 \leqslant x,y,z, x^\prime, y^\prime, z^\prime \leqslant M$ in terms of additive characters we have 
\[ S_1 = \sum\limits_{ \substack{ 
\mathbf{b} = (b_1,b_2,b_3,b_4,b_5,b_6),\\
0 \leqslant b_1,b_2,b_3,b_4,b_5,b_6 \leqslant q-1 }} 
S(\mathbf{b},q) \prod\limits_{i=1}^6 \widehat{ 1_{[M]}}(b_i),\] 
where \[S(\mathbf{b},q) := \sum\limits_{\substack{x,y,z \leqslant q 
\\ x^\prime,y^\prime,z^\prime \leqslant q \\ x^2 - y^2 
\equiv  (x^{\prime})^{2} - (y^{\prime})^{2} \, (\text{mod } q) 
\\ y^2 - z^2 \equiv(y^{\prime})^2 - (z^{\prime})^2 \, 
(\text{mod } q)}} e_q(\mathbf{b} \cdot 
(x,y,z,x^\prime,y^\prime,z^\prime))\] and $\widehat{1_{[M]}}$ is as in (\ref{fourier transform}). The contribution from the term with 
$\mathbf{b} = \mathbf{0}$ is equal to $S_3$. 
Performing the same expansion on $S_2$, 
we see that the terms arising from 
$\mathbf{b} = \mathbf{0}$ cancel, and we are left with the bound 

\begin{equation}
\label{the first fourier bound}
S_1 - 2S_2 + S_3 \leqslant  \sum\limits_{ \substack{ 0 \leqslant b_1,b_2,b_3,b_4,b_5,b_6 \leqslant q-1 \\ \mathbf{b} \neq \mathbf{0}}}
\Big(\prod\limits_{i=1}^6\vert \widehat{1_{[M]}}(b_i)\vert\Big) \vert S(\mathbf{b},q)\vert .\\
\end{equation}

Our task moves to bounding $\vert S(\mathbf{b},q)\vert$. We have the trivial bound 
\[ \vert S(\mathbf{b},q)\vert \leqslant q^6,\] 
but since $q$ is prime and $\mathbf{b} \neq \mathbf{0}$ we will be able to improve 
on this bound substantially.

Our approach will be elementary. We begin with writing
\begin{align}
\label{equation introducing j and k}
S(\mathbf{b},q) = \frac{1}{q^2} \sum\limits_{ j,k \leqslant q}\sum\limits_{\substack{x,y,z \leqslant q \\ x^\prime,y^\prime,z^\prime \leqslant q }} e_q(&\mathbf{b} \cdot (x,y,z,x^\prime,y^\prime,z^\prime) \nonumber \\&+ j(x^2 - y^2 - (x^\prime)^2 + (y^\prime)^2) + k (y^2 - z^2 - (y^\prime)^2 + (z^\prime)^2).
\end{align}
\noindent As we did when estimating $A_0(q,c_1,c_2)$, let us first consider the contribution from those terms when $j = 0$, $k=0$, or $j=k$. This is 

\begin{align}
\label{j equals zero etc}
&\frac{1}{q^2} \sum\limits_{ k \leqslant q } 
\sum\limits_{\substack{x,y,z \leqslant q \\ 
x^\prime,y^\prime,z^\prime \leqslant q}} 
e_q(\mathbf{b} \cdot (x,y,z,x^\prime,y^\prime,z^\prime) 
+ k(y^2 - z^2 - (y^\prime)^2 + (z^\prime)^2))\nonumber \\
+ &\frac{1}{q^2} \sum\limits_{ j \leqslant q } 
\sum\limits_{ \substack{x,y,z \leqslant q 
\\ x^\prime,y^\prime,z^\prime \leqslant q}} 
e_q(\mathbf{b} \cdot (x,y,z,x^\prime,y^\prime,z^\prime) 
+ j(x^2 - y^2 - (x^\prime)^2 + (y^\prime)^2)) \nonumber\\
+ & \frac{1}{q^2}  \sum\limits_{ j \leqslant q } 
\sum\limits_{\substack{x,y,z \leqslant q \\ 
x^\prime,y^\prime,z^\prime \leqslant q}} 
e_q(\mathbf{b} \cdot (x,y,z,x^\prime,y^\prime,z^\prime) 
+ j(x^2 - z^2 - (x^\prime)^2 + (z^\prime)^2)) \nonumber\\
- & 
\frac{2}{q^2}\sum\limits_{\substack{x,y,z \leqslant q 
\\ x^\prime,y^\prime,z^\prime \leqslant q}} 
e_q(\mathbf{b} \cdot (x,y,z,x^\prime,y^\prime,z^\prime)).
\end{align} 
\noindent 
The first three of these terms devolve into the exponential 
sums involved in the pair correlations of the fractional parts of 
$\alpha n^2$ considered by Heath-Brown in \cite{HB10}. 
Indeed, note that by completing the square in the variables $y, y^\prime, z, z^\prime$ we have 
\begin{equation}
\label{equation first term}
\frac{1}{q^2}\sum\limits_{ k \leqslant q } 
\sum\limits_{\substack{x,y,z \leqslant q 
\\ x^\prime,y^\prime,z^\prime \leqslant q}} 
e_q(\mathbf{b} \cdot (x,y,z,x^\prime,y^\prime,z^\prime) 
+ k(y^2 - z^2 - (y^\prime)^2 + (z^\prime)^2))
\end{equation} is equal to 
\[1_{q \vert b_1} 1_{q \vert b_4}
\sum\limits_{ k \leqslant q-1} \vert G(k)\vert^4 
e_q\Big(\frac{-b_2^2 + b_3^2 + b_5^2 - b_6^2}{4k}\Big),
\] 
where \[ G(k) = \sum\limits_{ x \leqslant q} e_q(k x^2)\] 
is the Gauss sum, as before, and
$1_{q \vert b}$ abbreviates the indicator function of the condition $q\mid b$.
We also use $\frac{1}{k}$ to refer to the multiplicative inverse of $k$ modulo $q$. 
Since $\vert G(k)\vert = q^{1/2}$ by the standard evaluation (\ref{Gauss sum}), 
the term (\ref{equation first term}) is equal to 
\begin{equation}
\begin{cases}
q^3 - q^2 & \text{if } q \vert b_1, q \vert b_4, 
q \vert (-b_2^2 + b_3^2 + b_5^2 - b_6^2)
\\
-q^2 & \text{if } q\vert b_1, q \vert b_4, 
q \nmid (-b_2^2 + b_3^2 + b_5^2 - b_6^2) \\
0 & \text{if } q \nmid b_1 \text{ or } 
q \nmid b_4.
\end{cases}
\end{equation}

We compute that the overall size of (\ref{j equals zero etc}) is
\begin{equation}
\label{j equals 0 etc cases}
\begin{cases}
O(q^3) & \text{if }
q\vert b_1, q \vert b_4, 
q \vert (-b_2^2 + b_3^2 + b_5^2 - b_6^2) \\
O(q^3) & \text{if }
q\vert b_2, q \vert b_5, 
q \vert (-b_3^2 + b_1^2 + b_6^2 - b_4^2) \\
O(q^3) & \text{if }
q\vert b_3, q \vert b_6, q \vert (-b_1^2 + b_2^2 + b_4^2 - b_5^2)\\
O(q^2) & \text{otherwise}.
\end{cases}
\end{equation}

Now consider the contribution to (\ref{equation introducing j and k}) when $j \neq 0$, $k \neq 0$, and $j \neq k$. By completing the square again, this contribution is equal to 

\begin{align}
\label{equation first one with Gauss sums}
\frac{1}{q^2}
\sum\limits_{\substack{j,k \leqslant q-1 \\ j \neq k}} 
\vert G(j)\vert ^2\vert G(k-j)\vert ^2 \vert 
G(-k)\vert^2 
e_{q}\Big(-\frac{b_1^2}{4j} - \frac{b_2^2}{4(k-j)} 
- \frac{b_3^2}{4k}+\frac{b_4^2}{4j} 
+ \frac{b_5^2}{4(k-j)} + \frac{b_6^2}{4k}\Big).
\end{align}
\noindent This is equal to 
\[ q \sum\limits_{\substack{j,k \leqslant q-1 
\\ j \neq k}}e_q\Big( \frac{f(j,k)}{g(j,k)}\Big)
\] 
where 
\[ f(j,k) = j^2 (b_3^2 - b_6^2) 
+ jk(b_1^2 - b_2^2 - b_3^2 - b_4^2 
+ b_5^2 + b_6^2) + k^2 (-b_1^2 + b_4^2)
\] 
and \[ g(j,k) = 4 jk(-j+k).\]

By letting $l = j/k \, (\text{mod } q)$, we can reparametrise this exponential sum as 
\begin{equation}
\label{equation loads of cases}
 q \sum\limits_{\substack{k \leqslant q-1 \\ 
 2 \leqslant l \leqslant q-1}}
 e_q\Big(-\frac{f(l,1)}{k g(l,1)}\Big).
\end{equation} 
This in turn is equal to
\begin{equation}
\label{equation loads of cases 2}
q(q-1)R - q(q-2-R),
\end{equation}
where $R$ is the number of $\ell$ in the range $2 \leqslant \ell \leqslant q-1$ for which $f(\ell,1) = 0 \, (\text{mod } q)$. There are two relevant cases. If $q\mid (b_1^2 - b_4^2)$, 
$q \mid (b_2^2 - b_5^2)$, and $q \mid (b_3^2 - b_6^2)$, then $R = q-2$ and so \eqref{equation loads of cases 2} is exactly equal to $q(q-1)(q-2)$. Otherwise $f(\ell,1)$ is a non-zero polynomial of degree at most two in $\ell$, and so $R = O(1)$. Hence \eqref{equation loads of cases 2} is $O(q^2)$. 

If we combine our knowledge of the sum (\ref{equation loads of cases}) with our knowledge of the terms with $j=0,k=0,j=k$ from (\ref{j equals 0 etc cases}), this yields:
\begin{equation}
\label{combined bound on S(a,p)}
\vert S(\mathbf{b},q)\vert = \begin{cases}
O(q^3) & \text{if }
q\vert b_1, 
q \vert b_4, q \vert (-b_2^2 + b_3^2 + b_5^2 - b_6^2)\\
O(q^3) & \text{if }
q\vert b_2, q \vert b_5, 
q \vert (-b_3^2 + b_1^2 + b_6^2 - b_4^2)\\
O(q^3) & \text{if }
q\vert b_3, q \vert b_6, 
q \vert (-b_1^2 + b_2^2 + b_4^2 - b_5^2)\\
O(q^3) & \text{if } 
q \text{ divides all of } (b_1^2 - b_4^2), (b_2^2 - b_5^2),  \text{ and }
(b_3^2 - b_6^2)\\
O(q^2) & \text{otherwise.}\end{cases}
\end{equation}

Plugging this bound into \eqref{the first fourier bound}, we infer that 
\begin{equation}
\label{eq: S1S2S3}
S_1-2S_2+S_3 \ll q^3  (T_1 + T_2 )
+ q^2 T_3
\end{equation} where (after a straightforward relabelling of the variables) \begin{align*}
T_1 = \sum\limits_{ 
\substack{ -q/2 < b_1,b_2,b_3,b_4,b_5,b_6 <q/2 
\\ q\vert b_1 \pm b_4, q\vert  b_2 \pm b_5, q \vert b_3 \pm b_6 \\ \mathbf{b} \neq \mathbf{0}}}
\Big(\prod\limits_{i=1}^6\vert \widehat{1_{[M]}}(b_i)\vert\Big),
\\
T_2 = \sum\limits_{ 
\substack{ -b/2 <  b_1,b_2,b_3,b_4< q/2 \\ q \mid b_1^2 -b_2^2+b_3^2-b_4^2 \\ \mathbf{b} \neq \mathbf{0}}}
\frac{M^2}{q^2}
\Big(\prod\limits_{i=1}^4\vert \widehat{1_{[M]}}(b_i)\vert\Big),\\
T_3 = \sum\limits_{ 
\substack{ -q/2 < b_1,b_2,b_3,b_4,b_5,b_6 \leqslant q/2 
\\ \mathbf{b} \neq \mathbf{0}}}
\Big(\prod\limits_{i=1}^6\vert \widehat{1_{[M]}}(b_i)\vert\Big).
\end{align*}
\noindent Indeed, since $q$ is prime, if $q \vert b_1^2 - b_4^2$ then $q\vert (b_1 + b_4)$ or $q \vert (b_1 - b_4)$. This is one of the places in which the assumption that $q$ is prime is particularly convenient (the other being the reparametrisation of $j$ and $k$ to get \eqref{equation loads of cases}). \\

For $-q/2 < b < q/2$, recall the standard bound 
\[ \vert \widehat{1_{[M]}}(b)\vert \ll 
\min \Big( \frac{M}{q}, \frac{1}{\vert b\vert}\Big)
\] 
which produces the estimate 
$$
\sum_{-q/2 < b < q/2} \vert \widehat{1_{[M]}}(b)\vert
\ll 
\sum_{ \vert b \vert  < q/M} \frac{M}{q}
+
\sum_{ q/M \leqslant \vert b \vert  < q/2} 
\frac{1}{\vert b\vert} \ll \log q.
$$In the sum defining $T_1$, if $(b_1,b_2,b_3)$ are fixed then there are $O(1)$ possibilities for $(b_4,b_5,b_6)$. Similarly, in $T_2$, once $(b_1,b_2,b_3)$ are fixed then there are at most $2$ possibilities for $b_4$. Hence \[T_1,T_2 \ll \frac{M^3}{q^3} (\log q)^3 \quad \text{and}\quad T_3\ll (\log q)^6,\] since $M<q$. Substituting these bounds into \eqref{eq: S1S2S3}, the proof of Lemma \ref{Lemma technical estimate} is complete. $\qed$\\
\section{Proof of Lemma \ref{Lemma bounding key error term}}\DIFaddbegin \label{Section: Proofs of Lemma 2.3}
\DIFaddend Let $\beta$ and $\eta$ be as in the statement of Lemma \ref{Lemma bounding key error term}. We begin the proof with another auxiliary lemma, which concerns the quantity
 \[ \Delta^*_\beta( q, c_1,c_2) = \max_{M \leqslant q^{\frac{2}{2+\beta}}}\vert \Delta(M,q,c_1,c_2)\vert.\]

\begin{Lemma}
\label{Lemma uniform}
Let $q$ be an odd prime. Then we have \[\sum\limits_{c_1,c_2\leqslant q} \Delta^*_\beta(q,c_1,c_2)^2 
\ll q^{o(1)} q^{\frac{7 - \beta}{2+\beta}}.\]
\end{Lemma}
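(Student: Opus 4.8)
\textbf{Proof proposal for Lemma \ref{Lemma uniform}.}

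The plan is to deduce this from Lemma \ref{Lemma technical estimate} by a standard dyadic-pigeonhole argument, at the cost of a harmless $q^{o(1)}$ factor. First I would observe that $\Delta(M,q,c_1,c_2)$ is controlled pointwise in a trivial way for tiny $M$: since $0 \leqslant A(M,q,c_1,c_2) \leqslant M^3$ and $0 \leqslant (M/q)^3 A_0(q,c_1,c_2) \ll M^3 q^{-2} \cdot q = M^3 q^{-1}$ (using Lemma \ref{lem: approx size of A_not}, noting the degenerate cases are $O(q)$ and the generic case is $q + O(\sqrt q)$), we get $\Delta(M,q,c_1,c_2) \ll M^3$ for every individual tuple. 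Consequently $\sum_{c_1,c_2 \leqslant q} \Delta(M,q,c_1,c_2)^2 \ll M^3 \sum_{c_1,c_2} \Delta(M,q,c_1,c_2) \ll M^3 \cdot (M^3 + q^2) $, but more to the point the key input is simply Lemma \ref{Lemma technical estimate} as stated.

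The main step is the passage from a bound for each fixed $M$ to a bound for the maximum over $M$. For a fixed pair $(c_1,c_2)$, let $M^* = M^*(c_1,c_2) \leqslant q^{2/(2+\beta)}$ be a value of $M$ achieving (or nearly achieving) the maximum defining $\Delta^*_\beta(q,c_1,c_2)$. Partition the range $1 \leqslant M \leqslant q^{2/(2+\beta)}$ into $O(\log q)$ dyadic blocks $M \in [2^j, 2^{j+1})$. For each tuple $(c_1,c_2)$, the maximiser $M^*$ lies in one such block, indexed by $j = j(c_1,c_2)$; I would then group the tuples according to this index $j$. For a fixed $j$, I need to bound $\sum_{(c_1,c_2): j(c_1,c_2) = j} \Delta(M^*(c_1,c_2), q, c_1, c_2)^2$. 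The subtlety is that $M^*$ still depends on $(c_1,c_2)$ even within a dyadic block, so I cannot directly apply Lemma \ref{Lemma technical estimate} with a single $M$. To handle this, I would further chop each dyadic block into $O(\log q)$ many pieces of the form $M \in [M_0, M_0(1 + 1/\log q))$ — but in fact a cleaner route is available: observe that $A(M,q,c_1,c_2)$ is monotone non-decreasing in $M$, as is $(M/q)^3 A_0(q,c_1,c_2)$, so for $M \in [M_1, M_2]$ one has the sandwiching $\Delta(M,q,c_1,c_2) \leqslant A(M_2,\cdot) - (M_1/q)^3 A_0(\cdot) \leqslant \Delta(M_2,\cdot) + ((M_2/q)^3 - (M_1/q)^3) A_0(\cdot)$ and similarly from below, whence $\Delta(M,q,c_1,c_2) \leqslant \max(\Delta(M_1,q,c_1,c_2), \Delta(M_2,q,c_1,c_2)) + O((M_2^3 - M_1^3) q^{-2} A_0(q,c_1,c_2) / q^{...})$. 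Choosing the endpoints of a partition of $[1, q^{2/(2+\beta)}]$ into $O(\log q)$ intervals on which $M_2^3 - M_1^3$ is small relative to $M_2^3$, the extra error is negligible, and we reduce to controlling $\Delta$ at the $O(\log q)$ endpoints $M_1 < M_2 < \cdots$ of the partition.

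Concretely, I would take the endpoints $M_i = \lceil q^{2/(2+\beta)} / 2^i \rceil$ for $0 \leqslant i \ll \log q$, together with dyadic refinement if needed, so that on each interval $[M_{i+1}, M_i]$ we have $\Delta(M, q, c_1, c_2) \ll \Delta(M_i, q, c_1, c_2) + \Delta(M_{i+1}, q, c_1, c_2) + (\text{small})$. Then
\[
\sum_{c_1, c_2 \leqslant q} \Delta^*_\beta(q, c_1, c_2)^2 \ll \sum_{i \ll \log q} \sum_{c_1, c_2 \leqslant q} \Delta(M_i, q, c_1, c_2)^2 + (\text{negligible}),
\]
and each inner sum is bounded by Lemma \ref{Lemma technical estimate} applied with $M = M_i < q$, giving $\ll (\log q)^3 M_i^3 + (\log q)^6 q^2 \ll (\log q)^3 q^{6/(2+\beta)} + (\log q)^6 q^2$. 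Summing over the $O(\log q)$ values of $i$ and absorbing all powers of $\log q$ into $q^{o(1)}$, this yields
\[
\sum_{c_1, c_2 \leqslant q} \Delta^*_\beta(q, c_1, c_2)^2 \ll q^{o(1)} \big( q^{\frac{6}{2+\beta}} + q^2 \big).
\]
To finish I would check that the stated exponent $\frac{7-\beta}{2+\beta}$ dominates both terms: indeed $\frac{7-\beta}{2+\beta} \geqslant \frac{6}{2+\beta}$ iff $7 - \beta \geqslant 6$ iff $\beta \leqslant 1$, which holds since $\beta < 3/4$; and $\frac{7-\beta}{2+\beta} \geqslant 2$ iff $7 - \beta \geqslant 4 + 2\beta$ iff $\beta \leqslant 1$, again true. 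Hence $q^{6/(2+\beta)} + q^2 \ll q^{(7-\beta)/(2+\beta)}$ and the lemma follows.

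\emph{Main obstacle.} The only genuine difficulty is the uniformity in $M$: one must ensure that replacing the supremum over the continuum of values $M \leqslant q^{2/(2+\beta)}$ by a maximum over $O(\log q)$ sampled values costs only $q^{o(1)}$. The monotonicity of $M \mapsto A(M,q,c_1,c_2)$ together with the a priori bound $A_0(q,c_1,c_2) \ll q$ (Lemma \ref{lem: approx size of A_not}) makes this routine, but it is the one place where a little care is needed rather than a direct quotation of Lemma \ref{Lemma technical estimate}.
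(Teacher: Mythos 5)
There is a genuine gap in the pigeonhole step, and it is worth noticing a red flag first: you claim a bound of order $q^{o(1)}\bigl(q^{6/(2+\beta)} + q^2\bigr)$, which for every $\beta<1$ is \emph{strictly smaller} than the stated $q^{(7-\beta)/(2+\beta)}$. If a routine dyadic pigeonhole from Lemma \ref{Lemma technical estimate} actually gave this, the paper would have stated the sharper bound. The problem lies in the error term you dismiss as ``negligible.'' Your monotonicity argument gives, for $M\in[M_1,M_2]$,
$\Delta(M,q,c_1,c_2)\leqslant\max\bigl(\Delta(M_1,\cdot),\Delta(M_2,\cdot)\bigr)+O\bigl((M_2^3-M_1^3)q^{-3}A_0(q,c_1,c_2)\bigr)$,
and since $A_0\ll q$ this error is $\ll (M_2^3-M_1^3)/q^2$ pointwise. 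But with only $O(\log q)$ sample points covering $[1,q^{2/(2+\beta)}]$ there must be an interval with $M_2/M_1\geqslant 1+c$ for a fixed $c>0$, so $M_2^3-M_1^3\asymp M_2^3$ and this ``extra'' term is \emph{not} small relative to the sampled $\Delta$-values. Squaring and summing over the $\asymp q^2$ pairs $(c_1,c_2)$, the top dyadic scale $M\asymp q^{2/(2+\beta)}$ alone contributes $\asymp q^2\cdot(M^3/q^2)^2 = q^{12/(2+\beta)-2} = q^{(8-2\beta)/(2+\beta)}$. Since $8-2\beta>7-\beta$ precisely when $\beta<1$, this dominates the claimed bound in the entire regime of interest; in fact plugging $q^{(8-2\beta)/(2+\beta)}$ into the later Cauchy--Schwarz step (the chain \eqref{the critical inequalities and CS}) makes the second term there exactly $q^{1+C\eta+o(1)}$, so the Borel--Cantelli argument would not close.

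The paper's proof is structurally different precisely because of this. It does not pigeonhole to $O(\log q)$ dyadic scales. Instead it proves a separate second-moment bound on the increments, $\sum_{c_1,c_2}(A(M+K,\cdot)-A(M,\cdot))^2 \ll q^{o(1)}(K^2M^4q^{-2}+KM^2)$, deduces from it a bound on $\sum_{c_1,c_2}\max_{0\leqslant H\leqslant K}\Delta(M+H,\cdot)^2$, covers $[1,q^{2/(2+\beta)}]$ by $\asymp q^{2/(2+\beta)}K^{-1}$ intervals of length $K$, and then \emph{optimizes over $K$}, finding $K\asymp q^{(1+\beta)/(2+\beta)}$. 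This is a polynomially large number of intervals, spaced much more finely than dyadically (the ratio $K/M\asymp q^{(\beta-1)/(2+\beta)}\to 0$ since $\beta<1$), and it is exactly this optimization which produces the exponent $(7-\beta)/(2+\beta)$. To repair your proposal you would need to replace the $O(\log q)$-sample dyadic step by a covering with variable spacing $K$, carry the deterministic error $\asymp K^2M^4q^{-2}$ through, and balance it against the per-sample cost from Lemma \ref{Lemma technical estimate}; at that point you would have essentially reconstructed the paper's argument.
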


 \begin{proof}
 
We will deduce this result from Lemma \ref{Lemma technical estimate}. Taking $1 \leqslant K \leqslant M \leqslant q$, we begin by seeking a bound on 
\begin{equation}
\label{equation square differences}
\sum\limits_{c_1,c_2 \leqslant q} (A( M+K, q,c_1,c_2) - A(M,q,c_1,c_2))^2.
\end{equation} Firstly, $A( M+K, q,c_1,c_2) - A(M,q,c_1,c_2)$ is at most
\begin{align*}
 &\vert \{ x \in (M,M+K],\, y,z \leqslant M+K : x^2 - y^2 \equiv c_1 \, (\text{mod } q), \, y^2 - z^2 \equiv c_2 \, (\text{mod } q)\}\vert \\
+&\vert \{ y \in (M,M+K], \, x,z \leqslant M+K: x^2 - y^2 \equiv c_1 \, (\text{mod } q), \, y^2 - z^2 \equiv c_2 \, (\text{mod } q)\}\vert \\
+&\vert \{ z \in (M,M+K], \, x,y \leqslant M+K: x^2 - y^2 \equiv c_1 \, (\text{mod } q), \, y^2 - z^2 \equiv c_2 \, (\text{mod } q)\}\vert. 
\end{align*}
\noindent Let $E_1(q,c_1,c_2)$ refer to the first quantity, $E_2(q,c_1,c_2)$ refer to the second quantity, and $E_3(q,c_1,c_2)$ refer to the third quantity. We have that expression (\ref{equation square differences}) is at most a constant times 
\[ \sum\limits_{c_1,c_2 \leqslant q}(E_1(q,c_1,c_2)^2 + E_2(q,c_1,c_2)^2 + E_3(q,c_1,c_2)^2).\] 
By a change of variables, we may reduce consideration just to $E_2$. Indeed, if $x^2 - y^2 \equiv c_1 \, (\text{mod }q)$ and $ y^2 - z^2 \equiv c_2 \, (\text{mod } q)$ then $z^2-x^2 \equiv -c_1 -c_2 \, (\text{mod }q)$. 
Hence $E_1(q,c_1,c_2) = E_2(q,-c_1-c_2, c_1)$, and so 
\[\sum\limits_{c_1,c_2 \leqslant q}E_1(q, c_1,c_2)^2 
= \sum\limits_{c_1,c_2 \leqslant q} E_2(q, c_1,c_2)^2.
\] A similar argument works for $E_3(q,c_1,c_2)$. 

Now, $\sum_{c_1,c_2 \leqslant q}E_2(q,c_1,c_2)^2$ is equal to 
\begin{equation}
\label{eq: equation in ranges}
\Big 
\vert \Big\{ y_1,y_2 \in (M,M+K],\, x_1,x_2,z_1,z_2 \leqslant M+K : 
\begin{array}{lr}
x_1^2 - y_1^2 \equiv x_2^2 - y_2^2 (\text{mod } q),\\
 y_1^2 - z_1^2 
\equiv y_2^2 - z_2^2 \, 
(\text{mod } q) 
\end{array}
\Big\}
\Big\vert.
\end{equation}
Fixing integers $k_1,k_2$ and integers $y_1,y_2 \in (M,M+K]$, 
we will now bound the number of solutions 
$(x_1,x_2,z_1,z_2)$ to the system of equations
\begin{align}
\label{fiddly equation}
x_1^2 - x_2^2 &= y_1^2 - y_2^2 + k_1 q \nonumber\\
z_2^2 - z_1^2 & = y_2^2 - y_1^2 + k_2 q.
\end{align}
If both $y_1 ^ 2 - y_2^2 + k_1q \neq 0$ and $y_2^2 - y_1^2 + k_2 q \neq 0$ then, using the divisor bound, we get $q^{o(1)}$ solutions. If $y_1 ^ 2 - y_2^2 + k_1q =0$ and $y_2^2 - y_1^2 + k_2 q \neq 0$ we get $O(q^{o(1)}M)$ solutions. If both $y_1 ^ 2 - y_2^2 + k_1q =0$ and $y_2^2 - y_1^2 + k_2 q = 0$ we get $O(M^2)$ solutions. Note that in such a case we must have $k_1 = -k_2$.

Regarding the other conditions on $(y_1,y_2,k_1,k_2)$, we note that the variables $k_1,k_2$ are both restricted to intervals of length 
$O(M^2/q + 1)$. Further, if $k_1 \neq 0$ 
then the total number of solutions of $y_1,y_2,k_1$ 
to $y_1^2 - y_2^2 = k_1 q$, with $y_1,y_2 \in (M, M+K]$, is 
$O(q^{o(1)} (KM/q + 1))$, since $\vert y_1^2 - y_2^2\vert = O(KM)$.
If, however, $k_1=0$ then of course $y_1 = y_2$ so there are $O(K)$ solutions here. 
Thus, summing over $(y_1,y_2,k_1,k_2)$, 
we bound \eqref{eq: equation in ranges} above by 
\[ q^{o(1)}\Big(K^2\Big(\frac{M^2}{q} + 1\Big)^2 
+ M\Big(\frac{M^2}{q} + 1\Big)\Big( \frac{KM}{q} + 1\Big) + KM\Big(\frac{M^2}{q} + 1\Big)
+ M^2\Big(\frac{KM}{q}  + 1\Big)  + K M^2\Big).
\] Since $K \leqslant M \leqslant q$ we may simplify the above and conclude that 
\begin{equation}
\label{eq: bound on difference}
\sum\limits_{c_1,c_2 \leqslant q}(A(M+K,q,c_1,c_2) - A(M,q,c_1,c_2))^2 \ll q^{o(1)} (K^2 M^4 q^{-2} + KM^2).
\end{equation} 
\noindent One may think of this uper bound as being given by the sum of an average contribution and a diagonal contribution. 

From (\ref{eq: bound on difference}), together with previous lemmas, it turns out that one can control 
\[ \sum\limits_{ 1 \leqslant c_1,c_2 \leqslant q} 
\max\limits_{0 \leqslant H \leqslant K} \Delta(M+H, q, c_1,c_2)^2.\] 
Indeed, since \[ \vert(a_1 - a_2)^2 - (b_1 - b_2)^2\vert 
\ll (a_1 - b_1)^2 + (a_2 - b_2)^2\] for all reals $a_1,b_1,a_2,b_2$, 
we have that
\begin{align}\label{eq: difference max Delta}
&\sum\limits_{c_1,c_2 \leqslant q} 
\max\limits_{0 \leqslant H \leqslant K}\Delta(M+H,q,c_1,c_2)^2 
- \sum\limits_{ c_1,c_2 \leqslant q} 
\Delta(M,q,c_1,c_2)^2 
\end{align}
\noindent is as most a constant times
\begin{align}
\label{overful box}
\sum\limits_{c_1,c_2 \leqslant q} 
(A(M+K,q,c_1,c_2) &- A(M,q,c_1,c_2))^2 \nonumber\\ 
&+ \max\limits_{0 \leqslant H \leqslant K} 
\Big(\frac{(M+H)^3-M^3}{q^3}\Big)^2
\sum\limits_{c_1,c_2 \leqslant q} A_0(q,c_1,c_2)^2.
\end{align}
\noindent From expression (\ref{eq: bound on difference}) and Lemma \ref{lem: approx size of A_not}, we conclude that (\ref{overful box}) is at most 
\[ q^{o(1)}(K^2M^4q^{-2} + KM^2)
\] again. Finally, using Lemma \ref{Lemma technical estimate} 
combined with \eqref{eq: difference max Delta}, 
we end up with the bound \begin{equation}
\label{eq: end up with the bound}
\sum\limits_{ c_1,c_2 \leqslant q} \max\limits_{0 \leqslant H \leqslant K}\Delta(M+H,q,c_1,c_2)^2 \ll  q^{o(1)} (M^3 +q^2) + q^{o(1)}K^2M^4 q^{-2}.
\end{equation} \noindent(The $KM^2$ term has been absorbed into the $M^3$ term.)

Before we use (\ref{eq: end up with the bound}) to prove the lemma, we need the bound \begin{equation}
\label{eq: M is 0}
\sum\limits_{c_1,c_2 \leqslant q} \max_{0 \leqslant H \leqslant K} \Delta(H,q,c_1,c_2)^2 \ll q^{o(1)}(K^6 q^{-2} + K^3).
\end{equation}
\noindent This may be proved by an identical analysis to the one above, proceeding with $M = 0$. \\

Now, returning to the original object of the lemma, we may cover \[\{n \in \mathbb{N} : 1 \leqslant n \leqslant q^{\frac{2}{2+\beta}}\}\] by the interval $[1,K]$ together with at most $q^{\frac{2}{2 + \beta}} K^{-1}$ other intervals each of the form $[M,M+K]$ with $K \leqslant M$. Thus, 
\begin{align*}
\sum\limits_{c_1,c_2 \leqslant q} \Delta^*_\beta(q,c_1,c_2)^2 &\leqslant q^{o(1)}(K^6q^{-2} + K^3) + \sum\limits_{l \leqslant q^{\frac{2}{2+\beta}}K^{-1}} \sum\limits_{ c_1,c_2 \leqslant q} \max\limits_{X \in [lK,(l+1) K)} \Delta(X,q,c_1,c_2)^2 
\\
&\leqslant q^{o(1)}(K^6q^{-2} + K^3) +  q^{o(1)}\sum\limits_{l \leqslant q^{\frac{2}{2+\beta}}K^{-1} } (l^3K^3+q^2 + l^4K^6 q^{-2}) 
\\
&\leqslant q^{o(1)}(K^6q^{-2} + K^3) +  q^{o(1)} (q ^{\frac{8}{2+\beta}}K^{-1}
+ q^{\frac{2}{2+\beta} + 2}K^{-1}
 + q^{\frac{10}{2+\beta} - 2}K).
\end{align*}
Since $\beta \leqslant 1$ always, we have 
$q^{\frac{2}{2+\beta} + 2} \leqslant q^{\frac{8}{2 + \beta}}$, which allows us to reduce matters to the bound \[ q^{o(1)}(K^6 q^{-2} + K^3 +  q^{\frac{8}{2+\beta}} K^{-1} + q^{\frac{10}{2+\beta} - 2}K).\]  
Optimising in $K$, we find that the minimum value is achieved when 
$K \asymp q^{\frac{1 + \beta}{2+\beta}}$, 
in which case the third and fourth terms have the same order of magnitude. We conclude that \[ \sum\limits_{c_1,c_2 \leqslant q} \Delta^*_\beta(q,c_1,c_2)^2  \ll q^{o(1)} q^{\frac{7 - \beta}{2+\beta}}\] and the lemma is proved. 
 \end{proof}

\begin{proof}[Proof of Lemma \ref{Lemma bounding key error term}]   
 Let $C$ be a suitably large absolute constant. For all odd prime $q$, and integers $a$ such that $(a,q) = 1$, define \begin{equation}
D_{\beta, \eta}(a,q) := \sum\limits_{ \substack{r_1,r_2 \neq 0 \\ \vert r_1\vert \leqslant q^{\frac{2-\beta}{2+\beta} + C\eta} \\ \vert r_2 \vert\leqslant q^{\frac{2-\beta}{2+\beta} + C\eta}}} \Delta^*_{\beta}(q, \overline{a} r_1,\overline{a}r_2).
\end{equation}
Now, suppose $\alpha \in [0,1]$ and fractions $a/q$ and $N$ satisfy \eqref{eq: diophantine approx} and \eqref{equation relationship of N and q}. We claim that, if $C$ is large enough, and if $q$ is large enough in terms of $\mathbf{s}$, $\mathbf{t}$, and $\eta$, it follows that 
\begin{equation}
\label{inequaltiy removing N}
\sum\limits_{\substack{ (r_1,r_2) \in S^+ \\ r_1r_2 \neq 0 \\ r_1 + r_2 \neq 0}} 
 \Delta(N,q,\overline{a}r_1, \overline{a} r_2) \leqslant D_{\beta, \eta}(a,q).
\end{equation}
Indeed, since $N \leqslant q^{\frac{2}{2+\beta}}$ we have \[ \Delta(N,q, \overline{a} r_1, \overline{a}r_2) \leqslant \Delta^*_\beta(q,\overline{a} r_1, \overline{a}r_2).\] Also, we have both \[\frac{qL}{N} \ll N^{\frac{2+\beta}{2} - \beta + 11\eta} \ll q^{(\frac{2-\beta}{2} +11\eta)\frac{2}{2+\beta} + } \ll q^{\frac{2-\beta}{2+\beta} + C\eta}\] and \[N^2 q^{\eta-1} \ll N^{2 - (1-\eta)(\frac{2+\beta}{2} + 10\eta)} \ll  q^{(\frac{2-\beta}{2} + C\eta)\frac{2}{2+\beta} } \ll q^{\frac{2-\beta}{2+\beta} + C\eta}.\]  Referring to the definition \eqref{eq S plus} of $S^+$, we have thus settled the inequality \eqref{inequaltiy removing N}.

Therefore, to prove Lemma \ref{Lemma bounding key error term} it suffices to show that, for almost all $\alpha \in [0,1]$, for all $(a,q,N)$ satisfying \eqref{eq: diophantine approx} and \eqref{equation relationship of N and q},
\begin{equation}
\label{eq: D(a,q) requirement}
D_{\beta, \eta}(a,q)L^{-2}N^{-1} \ll_{\alpha,\beta,\eta} q^{-\eta}.
\end{equation} 
\noindent In fact, by (\ref{equation relationship of N and q}), it suffices to show that, for almost all $\alpha \in [0,1]$ and for all $(a,q)$ satisfying \eqref{eq: diophantine approx},
\begin{equation}
\label{eq: D(a,q) requirement no N}
D_{\beta, \eta}(a,q) q^{\frac{-6 + 4 \beta}{2+\beta }} \ll_{\alpha,\beta, \eta} q^{-C\eta}.
\end{equation}
\noindent This expression makes no mention of $N$ or $L$, which will be a technical necessity in the Borel--Cantelli argument to come. \\

To show (\ref{eq: D(a,q) requirement no N}), for each $q$ we define
\[ \operatorname{Bad}_{\beta, \eta}(q) = 
\{ a \leqslant q: (a,q) = 1, 
D_{\beta, \eta}(a,q)q^{\frac{-6 + 4 \beta}{2+\beta}}\geqslant q^{-C\eta}\}.
\] It will be enough, then, to show that for almost all $\alpha \in [0,1]$, only finitely many of the fractions $(a,q)$ satisfying (\ref{eq: diophantine approx}) also satisfy $a \in \operatorname{Bad}_{\beta, \eta}(q)$. \\

To bound the size of $\operatorname{Bad}_{\beta, \eta}(q)$, we first note that \[\vert \operatorname{Bad}_{\beta, \eta}(q)\vert \leqslant q^{\frac{-6 + 4 \beta}{2+\beta}+C\eta}\sum\limits_{ \substack{ 1 \leqslant a\leqslant q \\ (a,q) = 1}} D_{\beta, \eta}(a,q). \] Further, we define $f_{\beta, \eta}(q,c_1,c_2)$ to be the number of triples $(a,r_1,r_2)$ such that
\begin{align*}
\overline{a} r_1 &\equiv c_1 \, (\text{mod }q) \\
\overline{a} r_2 &\equiv c_2 \, (\text{mod }q),
\end{align*}
\noindent for which $1 \leqslant a \leqslant q$, $(a,q) = 1$, $r_1r_2 \neq 0$, and $\vert r_1\vert, \vert r_2\vert \leqslant q^{\frac{2-\beta}{2+\beta} + C\eta}$. We then have 
\begin{align}
\label{returning to}
& \vert \operatorname{Bad}_{\beta, \eta}(q)\vert \leqslant q^{\frac{-6 + 4 \beta}{2+\beta } + C\eta} \sum\limits_{0 \leqslant c_1,c_2 \leqslant q-1} f_{\beta, \eta}(q,c_1,c_2) \Delta^*_\beta(q,c_1,c_2).
\end{align}

To estimate (\ref{returning to}), we need the following simple lemma about the size of $f_{\beta, \eta}(q,c_1,c_2)$:
\begin{Lemma}
\label{Lemma bound on f squared}

We have \[\sum\limits_{c_1,c_2 \leqslant q} f_{\beta, \eta}(q,c_1,c_2)^2 \ll q^{\frac{4 - 2 \beta}{2+\beta} + 2C\eta}(q + q^{\frac{4 - 2 \beta}{2+\beta} + 2C\eta })q^{o(1)}.\]
\end{Lemma}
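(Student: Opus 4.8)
The plan is to count the triples $(a, r_1, r_2)$ directly by first understanding, for each pair $(c_1, c_2)$, the structure of the $a$'s that can occur. The defining congruences $\overline{a} r_1 \equiv c_1$ and $\overline{a} r_2 \equiv c_2 \pmod q$ say that $r_1 \equiv a c_1$ and $r_2 \equiv a c_2 \pmod q$. So, once $(c_1, c_2)$ is fixed, the triple $(a, r_1, r_2)$ is entirely determined by $a$: namely $r_i$ is the representative of $ac_i$ modulo $q$, and this contributes to $f_{\beta,\eta}(q, c_1, c_2)$ precisely when that representative lies in $[-q^{\mu}, q^{\mu}]$ and is nonzero, where I write $\mu = \frac{2-\beta}{2+\beta} + C\eta$ for brevity. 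Thus $f_{\beta,\eta}(q, c_1, c_2)$ equals the number of $a \in [1,q]$ with $(a,q)=1$ such that both $\Vert a c_1 / q \Vert \leqslant q^{\mu - 1}$ (with $ac_1 \not\equiv 0$) and $\Vert a c_2 / q\Vert \leqslant q^{\mu - 1}$ (with $a c_2 \not\equiv 0$).

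**First** I would expand the square: $\sum_{c_1, c_2 \leqslant q} f_{\beta,\eta}(q, c_1, c_2)^2$ counts quadruples $(a, a', c_1, c_2)$ with $a, a'$ coprime to $q$ and $c_1, c_2$ such that all four of $ac_1, ac_2, a'c_1, a'c_2$ have representatives in the small box $[-q^{\mu}, q^{\mu}] \setminus \{0\} \pmod q$. Reparametrising, put $b = a'\overline{a} \pmod q$; then the conditions become: $ac_1 \equiv s_1$, $ac_2 \equiv s_2$, $a' c_1 = b(ac_1) \equiv b s_1$, $a'c_2 \equiv b s_2$, where $s_1, s_2$ range over nonzero elements of the box. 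So for each choice of $b \in (\mathbb Z/q\mathbb Z)^*$ and each pair $(s_1, s_2)$ in the box with $s_1, s_2 \ne 0$, I need $b s_1$ and $b s_2$ to also lie in the box modulo $q$; then $a$ is free (it determines $c_i \equiv \overline{a} s_i$, and $a'$ is then forced). Hence the count is $q$ times the number of $(b, s_1, s_2)$ with $s_1, s_2, bs_1, bs_2$ all in the box $B := \{|s| \leqslant q^{\mu}\} \setminus \{0\}$.

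**Next** I would bound the number of such $(b, s_1, s_2)$. There are two regimes. The diagonal-type term comes from $b = 1$ (or more generally $b$ small), contributing at most $O(|B|^2) = O(q^{2\mu})$ for the pair $(s_1, s_2)$ and $O(1)$ choices of such $b$; more carefully, for $b$ ranging so that $bs_1 \in B$ we are counting lattice points on hyperbola-like sets. The off-diagonal term: for generic $b$, the condition "$bs_1 \in B$ and $s_1 \in B$" forces $s_1$ to lie in a set of size roughly $|B|^2 / q = q^{2\mu - 1}$ by a standard counting of $\Vert b s/q\Vert$ small (this is the three-distance / Farey-type estimate), and similarly for $s_2$; summing over all $b \in (\mathbb Z/q\mathbb Z)^*$ gives $q \cdot q^{2\mu-1} \cdot q^{2\mu - 1} = q^{4\mu - 1}$. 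Combining, the number of $(b, s_1, s_2)$ is $O(q^{o(1)}(q^{2\mu} + q^{4\mu-1}))$, so $\sum f^2 \ll q^{o(1)} q \cdot (q^{2\mu} + q^{4\mu-1}) = q^{o(1)} q^{2\mu}(q + q^{4\mu - 1})$, which, recalling $2\mu = \frac{4-2\beta}{2+\beta} + 2C\eta$, is exactly the claimed bound $q^{\frac{4-2\beta}{2+\beta}+2C\eta}(q + q^{\frac{4-2\beta}{2+\beta}+2C\eta}) q^{o(1)}$.

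**The main obstacle** I anticipate is making the off-diagonal count rigorous and uniform: specifically, showing that for a fixed nonzero $b$, the number of $s$ with $|s| \leqslant q^{\mu}$ and $|bs \bmod q| \leqslant q^{\mu}$ is $O(q^{o(1)}(q^{2\mu-1} + 1))$, and — crucially — that when one sums over $b$, there is no conspiracy causing the bound to blow up (e.g. from $b$'s for which $q^{2\mu - 1} < 1$, where the "$+1$" terms proliferate). The cleanest route is to linearise: the pair $(s, bs \bmod q)$ lies on the lattice $\Lambda_b = \{(u,v) \in \mathbb Z^2 : v \equiv bu \pmod q\}$, which has covolume $q$, so the number of its points in the box $[-q^\mu, q^\mu]^2$ is $O(q^{2\mu}/q + q^\mu/\lambda_1 + 1)$ where $\lambda_1$ is the first successive minimum; one then has to track the $\lambda_1$-dependent term through the sum over $b$, using that $\sum_b 1/\lambda_1(\Lambda_b)$ is controlled (each short vector $(u,v)$ with $v \equiv bu$ pins down $b = v\overline u$, so short vectors are in bijection with $b$'s and one can organise the sum dyadically). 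I expect this successive-minima bookkeeping, rather than any deep input, to be where the real care is needed; everything else is the routine reparametrisation and divisor-bound estimates already used elsewhere in the paper.
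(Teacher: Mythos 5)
Your setup and reduction are sound and, up to a change of variables, identical to the paper's: after expanding the square and setting $b=a'\overline a$, $s_i \equiv ac_i$, $t_i \equiv b s_i$ (reduced to $(-q/2,q/2]$), you arrive at $q$ times the number of quadruples $(s_1,s_2,t_1,t_2)\in B^4$ with $t_1 s_2 \equiv t_2 s_1 \,(\text{mod } q)$, which is precisely the ``simpler equation'' $s_1 r_2 \equiv s_2 r_1$ the paper isolates. You then diverge on the final count. The paper's route is a short divisor-bound argument: lift the congruence to $t_1 s_2 - t_2 s_1 = kq$, note $|k|\ll q^{2\mu-1}+1$, observe that since $q$ is prime and the four entries are nonzero and smaller than $q$ in modulus the integer $t_1 s_2 - kq$ is nonzero, and then for each choice of $(k,t_1,s_2)$ the divisor bound leaves only $q^{o(1)}$ factorisations as $t_2 s_1$, giving $q^{o(1)}(q^{2\mu}+q^{4\mu-1})$ solutions at once. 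You instead propose a geometry-of-numbers route: fix $b$, count $N(b)=|\{s\in B: bs \bmod q \in B\}|$ via lattice points in $\Lambda_b$, and sum $N(b)^2$ with successive-minima bookkeeping. That route can be made to work, but be aware that the uniform intermediate bound you first state, $N(b)\ll q^{o(1)}(q^{2\mu-1}+1)$ for every nonzero $b$, is false: take $b=1$, where $N(1)\asymp q^{\mu}$, which dominates $q^{2\mu-1}+1$ whenever $\mu<1$. The $\lambda_1$-dependent term really is needed, and the bookkeeping is exactly as you anticipate: one uses $N(b) \ll (q^{2\mu-1}+q^{\mu}/\lambda_1(\Lambda_b))\cdot 1_{\lambda_1\leqslant q^{\mu}}$ together with the fact that there are $O(\lambda^2)$ values of $b$ with $\lambda_1(\Lambda_b)\asymp\lambda$ (each is pinned by a short primitive vector), so the dyadic sum of $(q^{\mu}/\lambda_1)^2$ over $b$ closes at $q^{2\mu+o(1)}$, and the $1_{\lambda_1\leqslant q^\mu}$ factor caps the contribution from the constant term at $O(q^{2\mu})$ rather than $O(q)$. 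In short: your plan is viable and reaches the same bound, but the decisive step is left as a sketch rather than carried out, and it is strictly longer than the divisor-bound shortcut the paper actually uses (which you acknowledge only in passing).
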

\begin{proof}
We have that $\sum_{c_1,c_2 \leqslant q} f_{\beta, \eta}(q,c_1,c_2)^2$ is equal to the number of pairs of triples $(a,r_1,r_2)$, $(b,s_1,s_2)$ such that
\begin{align}
\label{full pair of equations}
\overline{a} r_1 &\equiv \overline{b} s_1 \, (\text{mod }q) \nonumber \\
\overline{a} r_2 &\equiv \overline {b} s_2 \, (\text{mod }q),
\end{align} with $1\leqslant a,b \leqslant q$, $(a,q) = 1$, $(b,q) = 1$, $r_1r_2s_1s_2 \neq 0$ and $\vert r_1\vert, \vert r_2 \vert , \vert s_1\vert, \vert s_2\vert \leqslant q^{\frac{2-\beta}{2+\beta} + C\eta}$. By multiplying the first equation by $br_2$ and the second equation by $br_1$, one sees that for every such solution we must also have 
\begin{equation}
\label{simpler equation}
 s_1 r_2 \equiv s_2 r_1 \, (\text{mod } q).
 \end{equation}
  The mapping between the solutions to (\ref{full pair of equations}) and (\ref{simpler equation}) is at most $q$-to-$1$ because if the variables $a,r_1,r_2,s_1,s_2$ are fixed then $b$ is uniquely determined in (\ref{full pair of equations}). 
  
Solutions to (\ref{simpler equation}) are given by solutions 
$(r_1,r_2,s_1,s_2,k)$ to \[ s_1r_2 - s_2r_1 = kq,\] with $k$ 
in the range $0 \leqslant \vert k\vert 
\ll q^{\frac{2(2-\beta)}{2+\beta} - 1 + 2C\eta}$. 
By the divisor bound, after $k$, $s_1$ and $r_2$ are fixed there at most $q^{o(1)}$ valid choices of $s_2$ and $r_1$, so the total number of solutions to (\ref{simpler equation}) is at most 
\[ q^{\frac{4 - 2\beta}{2+\beta} + 2 C\eta}(1 + q^{\frac{4-2\beta}{2+\beta} - 1 + 2C\eta}).\] Multiplying by $q$ to get the number of solutions to (\ref{full pair of equations}), we prove the lemma. 
\end{proof}
\noindent (The reader may wish to note that if $\beta >2/3$ then the only valid value of $k$ in the above is $k = 0$, which simplifies the remainder of the analysis for these cases.)\\

Returning to (\ref{returning to}), we have
\begin{align}
\label{the critical inequalities and CS}
\vert \operatorname{Bad}_{\beta, \eta}(q)\vert &\leqslant q^{\frac{-6 + 4 \beta}{2+\beta } + C\eta}\cdot \Big(\sum\limits_{  c_1,c_2 \leqslant q} f_{\eta, \beta}(q,c_1,c_2)^2 \Big)^{1/2}\cdot \Big( \sum\limits_{c_1,c_2 \leqslant q} \Delta^*_\beta(q,c_1,c_2)^2 \Big)^{1/2}\nonumber  \\
& \ll q^{\frac{-6 + 4 \beta}{2+\beta }} \cdot (q^{\frac{1}{2} + \frac{2-\beta}{2+\beta}} + q^{\frac{4 - 2\beta}{2+\beta}}) \cdot q^{\frac{7 - \beta}{2(2+\beta)}}\cdot  q^{C\eta + o(1)} \nonumber\\
&\ll (q^{\frac{1+6\beta}{2(2+\beta)}} + q^{\frac{3 + 3\beta}{2(2+\beta)}}) q^{C\eta + o(1)}.
\end{align}
Here we have used Lemma \ref{Lemma uniform} and Lemma \ref{Lemma bound on f squared} to go from the first line to the second line.

Now, recall that $\beta < 3/4$, which implies that \[ \frac{ 1 + 6\beta}{2(2 + \beta)} < 1.\] The other term is less severe, and in fact $\beta < 1$ implies \[ \frac{3 + 3\beta}{2(2+ \beta)} < 1.\] Since $\eta$ is small enough, and $C$ is absolute, we conclude that 
\begin{equation}
\label{eq: size of bad}
\vert \operatorname{Bad}(q,\beta,\eta) \vert \ll_{\beta,\eta} q^{1 - 2\eta}.
\end{equation}

Now we can finally complete the proof of Lemma \ref{Lemma bounding key error term} by using the first Borel--Cantelli lemma. Indeed, pick $\alpha$ uniformly at random in $[0,1]$ and for each prime $q \geqslant 3$ let $E_{q,\beta, \eta}$ be the event that there exists an $a$ with $1 \leqslant a \leqslant q$, $(a,q) = 1$, \[\Big\vert \alpha - \frac{a}{q} \Big\vert < \frac{1}{q^{2 - \eta}},\] and $a \in \operatorname{Bad}_{\beta, \eta}(q)$. Then \[\mathbb{P}(E_{q, \beta, \eta}) \leqslant  \sum\limits_{ a \in \operatorname{Bad}_{\beta, \eta}(q)} \mu\Big( \Big(\frac{a}{q} - \frac{1}{q^{2-\eta}}, \frac{a}{q} + \frac{1}{q^{2 - \eta}}\Big)\Big) \ll_{\beta, \eta} q^{-1 -\eta}.\] 

Then $\sum_{q \geqslant 3} \mathbb{P}(E_{q, \beta, \eta}) < \infty$, and so with probability $1$ only finitely many of the events $E_{q,\beta, \eta}$ occur. Thus, by our long chain of reductions, Lemma \ref{Lemma bounding key error term} follows. 
\end{proof}
\vspace{3mm}
\section{Concluding remarks}
\label{Section Concluding remarks}
The proof of all of our main theorems is now complete. However, before concluding the paper, it is certainly worth us discussing whether $\beta < 3/4$ represents a natural limit of our approach. 

The chain of inequalities (\ref{the critical inequalities and CS}) is the critical moment of the entire proof, and this particular application of Cauchy--Schwarz is the main source of our loss in the range of $\beta$. Suppose that instead we had used the bound 
\begin{equation}
\label{eq: instead we used}
\vert \operatorname{Bad}_{\beta, \eta}(q)\vert \leqslant q^{\frac{-6 + 4 \beta}{2+\beta } + C\eta}\Big( \sum\limits_{ c_1,c_2 \leqslant q} f_{\beta, \eta}(q,c_1,c_2)\Big)^{1/2} \Big(\sum\limits_{c_1,c_2 \leqslant q} f_{\beta, \eta}(q,c_1,c_2) \Delta^*_\beta(q,c_1,c_2)^2 \Big)^{1/2}.
\end{equation} For simplicity of exposition here, we will assume that $\beta > 2/3$, that $C = 0$, and we will ignore all $q^{o(1)}$ terms. It is then easy to see that \[ \sum\limits_{c_1,c_2 \leqslant q} f_{\beta, \eta}(q,c_1,c_2) \approx q^{1 + \frac{2(2-\beta)}{2+\beta} }.\] Combining this bound with Lemma \ref{Lemma bound on f squared} one may conclude that  $f_{\beta, \eta}(q,c_1,c_2) \approx 1$ for $q^{1 + \frac{2(2-\beta)}{2+\beta}}$ pairs $(c_1,c_2)$, and is $0$ otherwise. So, given what we know from Lemma \ref{Lemma uniform} about the value of $\Delta^*_\beta(q,c_1,c_2)^2$ averaged over all pairs $c_1$ and $c_2$, it is not utterly unreasonable to hope that one could prove 
\begin{equation}
\label{eq: include the weight in this way}
\sum\limits_{c_1,c_2 \leqslant q} f_{\beta, \eta}(q,c_1,c_2) \Delta^*_\beta(q,c_1,c_2)^2 \ll_{\beta, \eta}  q^{\frac{7 - \beta}{2+\beta}} \cdot q^{1 + \frac{2(2-\beta)}{2+\beta}} \cdot q^{-2}= q^{\frac{9 - 4\beta}{2 + \beta}},
\end{equation} provided that the weight of $\Delta^*_\beta(q,c_1,c_2)^2$ does not concentrate on the support of $f$. 

Putting this bound into (\ref{eq: instead we used}) one would then get
\[\vert \operatorname{Bad}_{\beta,\eta}(q)\vert \ll_{\beta, \eta} q^{\frac{3 + 3 \beta}{2(2+\beta)}},\] i.e. only the second term from (\ref{the critical inequalities and CS}) would occur. As we have already remarked, we would then derive 
\[ \vert \operatorname{Bad}_{\beta, \eta}(q)\vert \ll_{\beta, \eta} q^{1- 2\eta},\] 
provided $\beta < 1$ and $\eta$ is small 
enough. This estimate would 
expand the range of Theorem \ref{Theorem TW} all the way to $L > N^\varepsilon$. Unfortunately, we have not been 
able to prove a version of 
Lemma \ref{Lemma technical estimate} which includes the weight 
$f_{\beta, \eta}(q,c_1,c_2)$ in the manner of expression 
(\ref{eq: include the weight in this way}).

One also recalls that in our application of Borel--Cantelli we did not need to bound $\vert \operatorname{Bad}_{\beta, \eta}(q)\vert$ uniformly for all $q$. One would be satisfied with \[\sum\limits_{q \text{ prime }}\frac{\vert \operatorname{Bad}_{\beta, \eta}(q)\vert}{q^2} < \infty.\] Thoughts move towards expressing the relevant exponential sums as an average of Kloosterman-type sums over the modulus $q$, which might be another route for future research. \\

Our final remark is that if $L \rightarrow \infty$ and $N/L \rightarrow \infty$ as $N \rightarrow \infty$ then, in the random model (\ref{random model}), the asymptotics are governed by the Central Limit Theorem. One can derive \[\frac{Z_{L,N} - L}{\sqrt{L}}\xrightarrow{dist} N(0,1)\] as $N\rightarrow \infty$. Theorem \ref{Theorem TW} could then be considered as a first step towards showing that for almost all $\alpha$ the skewness of $W_{\alpha,L,N}$ satisfies $\mathbb{E} ((W_{\alpha,L,N} - L)/\sqrt{L})^3\rightarrow 0$ as $N \rightarrow \infty$, with $L$ in a certain range. However, to show this asymptotic one would need to be able to extract the lower degree main-term from $\mathbb{E} W_{\alpha,L,N}^3$ (which is $3L^2$, as in Section \ref{Section reduction to correlation functions}) and then subsequently show that the error term in Theorem \ref{Theorem main theorem step function} is in fact $o(L^{3/2})$, rather than merely $o(L^3)$.  \\

\bibliographystyle{plain}
\appendix 

\section{Pair Correlations of the dilated squares at scale $N^{-\beta}$}\label{Appendix A}
In this section, we briefly indicate how one can deduce the following fact from the methods of the literature.  
\begin{Theorem}\label{thm: pair corr at scale < 1}
Let $\varepsilon \in(0,1/4)$. Then for almost all $\alpha \in [0,1]$, for all 
$ 1\leqslant L \leqslant N^{1- \varepsilon}$ and for all 
$(\log N)^{-1} \leqslant s \leqslant \log N$ we have
$$ 
R_2(\alpha ,L, N, 1_{[-s,s]}) =  
2Ls (1 + O_{\varepsilon,\alpha}(N^{-\varepsilon/13})).
$$
\end{Theorem}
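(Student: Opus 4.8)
The plan is to run the Rudnick--Sarnak pair-correlation argument of \cite{RS98}, but with every estimate made quantitative and uniform in $L$ and $s$. Set $\delta := sL/N$; the hypotheses place $\delta$ in the range $(N\log N)^{-1}\leqslant \delta \leqslant N^{-\varepsilon}\log N$, so in particular $\delta < 1/2$ once $N$ is large. Since
\[ N\cdot R_2(\alpha,L,N,1_{[-s,s]}) = F(\alpha,N,\delta) := \#\{(x,y)\in[N]^2 : x\neq y,\ \Vert\alpha(x^2-y^2)\Vert\leqslant\delta\},\]
it is enough to prove $F(\alpha,N,\delta) = 2\delta N^2\big(1 + O_{\varepsilon,\alpha}(N^{-\varepsilon/13})\big)$ for almost all $\alpha$, uniformly for $\delta$ in this range. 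First I would sandwich the $1$-periodic indicator of $\Vert t\Vert\leqslant\delta$ between smooth functions $\phi^\pm_\delta$, obtained by mollifying $1_{\Vert t\Vert\leqslant\delta\pm\rho}$ at scale $\rho = \delta N^{-c}$ with $c=\varepsilon/13$, which reduces matters to the following: for each fixed $\phi=\phi^\pm_\delta$, with $\widehat\phi(0) = 2\delta + O(\rho)$ and $|\widehat\phi(k)| \ll_A \min(\delta,|k|^{-1})\min(1,(\rho|k|)^{-A})$, one has
\[ \sum_{\substack{x,y\in[N]\\ x\neq y}}\phi\big(\alpha(x^2-y^2)\big) = \widehat\phi(0)(N^2-N) + O_{\varepsilon,\alpha}(\delta N^{2-c})\]
for almost all $\alpha$, uniformly.

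Expanding $\phi$ into its Fourier series reduces this to showing $|E(\alpha)| \ll \delta N^{2-c}$ for almost all $\alpha$, where $E(\alpha) := \sum_{k\neq 0}\widehat\phi(k)S_k(\alpha)$ and $S_k(\alpha) := \sum_{x\neq y\in[N]} e(k\alpha(x^2-y^2))$. The hard part is the second-moment bound
\[ \int_0^1 E(\alpha)^2\,d\alpha \ll N^{2+o(1)}\,\delta.\]
To obtain it, I would expand the square and use orthogonality: $\int_0^1 S_k(\alpha)S_{k'}(\alpha)\,d\alpha$ counts quadruples $(x_1,y_1,x_2,y_2)\in[N]^4$ with $x_i\neq y_i$ and $k(x_1^2-y_1^2)+k'(x_2^2-y_2^2)=0$. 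Writing $g=\gcd(k,k')$, $\kappa=|k|/g$, $\kappa'=|k'|/g$, this equation forces $x_1^2-y_1^2 = -\kappa' m$ and $x_2^2-y_2^2 = \pm\kappa m$ for a single integer $m$ with $|m|\ll N^2/\max(\kappa,\kappa')$; factoring $x^2-y^2=(x-y)(x+y)$ and applying the divisor bound to each of $x_1^2-y_1^2$ and $x_2^2-y_2^2$ gives $O\big(N^{2+o(1)}\gcd(k,k')/\max(|k|,|k'|)\big)$ solutions. Summing this against $|\widehat\phi(k)||\widehat\phi(k')|$, using that $|\widehat\phi|$ is $\asymp\delta$ on $|k|\lesssim 1/\delta$ and then $\asymp 1/|k|$ up to $|k|\asymp 1/\rho = N^c/\delta$, the bound follows from elementary manipulations with $\gcd$-sums and divisor sums (the off-diagonal is no larger than the diagonal $\sum_k\widehat\phi(k)^2\int S_k^2 \asymp N^{2+o(1)}\delta$).

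With this in hand, Chebyshev gives $\mathbb{P}_\alpha\big(|E(\alpha)| > \delta N^{2-c}\big) \ll N^{2+o(1)}\delta\cdot(\delta^2 N^{4-2c})^{-1} = N^{-2+2c+o(1)}\delta^{-1} \ll N^{-1+2c+o(1)}$, using $\delta\geqslant(N\log N)^{-1}$. I would then close the argument by Borel--Cantelli: discretise $\delta$ along a geometric grid of ratio $1+N^{-c}$ (only $O(N^{c+o(1)})$ values), and let $N$ run over a sequence $N_1<N_2<\cdots$ with $N_{i+1}=\lceil N_i(1+N_i^{-c})\rceil$, so that $\#\{i : N_i\leqslant X\}\ll X^{c}$. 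The total failure probability is then $\ll \sum_i N_i^{c+o(1)}\cdot N_i^{-1+2c+o(1)} \ll \sum_i N_i^{-1+3c+o(1)}$, which converges because $c=\varepsilon/13<\tfrac14$. For a generic $\alpha$ the grid statement therefore holds for all large $N_i$ and all grid-$\delta$; since $F(\alpha,N,\delta)$ is non-decreasing both in $\delta$ and in $N$, I can interpolate to all $N$ and all admissible $\delta$ at the cost of a further factor $1+O(N^{-c})$, and undoing the reduction yields the theorem with error $O_{\varepsilon,\alpha}(N^{-\varepsilon/13})$.

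The only genuinely substantive step is the second-moment estimate $\int_0^1 E(\alpha)^2\,d\alpha\ll N^{2+o(1)}\delta$ --- which is exactly the pair-correlation computation of Rudnick--Sarnak. It is available here (in contrast to the triple-correlation situation that motivates this paper, where the second moment of $R_3$ resists control) precisely because the Diophantine relation $k(x_1^2-y_1^2)=-k'(x_2^2-y_2^2)$ splits into a product of two linear forms and is therefore governed by the divisor bound, with square-root-type savings after averaging over $k$ and $k'$. Everything else is bookkeeping in the parameters $c$, $\rho$, the grid fineness, and the density of $(N_i)$; balancing these crudely gives the (non-optimal) exponent $1/13$, which is all that is needed in the sequel.
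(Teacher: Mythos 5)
Your proposal is correct and is essentially the paper's own argument, made slightly more streamlined. The paper sandwiches the indicator by Selberg--Vaaler trigonometric polynomials and runs the Chebyshev/Borel--Cantelli argument over a polynomial grid $N_m=m^4$ with a separate sub-grid of $L$-values $L_{m,i}=N_m^{\beta_{m,i}}$, whereas you mollify smoothly at scale $\rho=\delta N^{-c}$ and take a single geometric grid in the combined parameter $\delta=sL/N$ together with a geometric grid in $N$ of ratio $1+N^{-c}$; both routes rest on the same substantive ingredient --- the second-moment bound $\int_0^1 E(\alpha)^2\,d\alpha \ll N^{2+o(1)}\delta$, obtained from orthogonality and the divisor bound applied to $k(x_1^2-y_1^2)=-k'(x_2^2-y_2^2)$ --- and both finish by monotonicity of $F(\alpha,N,\delta)$ in each variable, so the differences are purely bookkeeping.
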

\noindent Note that this result immediately implies the estimate (\ref{pair correlation estimate that we need}), by approximating the function $f$ in (\ref{pair correlation estimate that we need}) with a suitable step function. 

We have made no attempt to obtain the best possible error term in Theorem \ref{thm: pair corr at scale < 1}, nor the largest admissible ranges for $s$ and for $L$. One will observe from the proof that rather better bounds would certainly follow if one assumed at the outset that $L$ were a slowly varying function of $N$.

A version of Theorem \ref{thm: pair corr at scale < 1} follows from arguments of Aistleitner, Larcher, 
and Lewko \cite{ALL17} as well as from arguments of Rudnick \cite{Ru18}
and Rudnick--Sarnak \cite{RS98}, by changing the relevant parameters. 
If one wanted an explicit characterisation of the set of suitable 
$\alpha$ in terms of properties of its rational approximations 
then one could also adapt the (much more involved) methods of 
Heath-Brown \cite{HB10}. In particular, 
the material of the present section is in no way novel.
However, we decided to add some explanations
on how to deduce Theorem \ref{thm: pair corr at scale < 1},
partly in order to make the exposition of our previous arguments
complete and self-contained, and partly in order to describe explicitly a suitable `sandwiching argument' for this result (expanding upon the description in \cite{ALL17}). \\

We begin with the following auxiliary lemma 
(where again no attempt was made to obtain 
the best possible error term):

\begin{Lemma}
\label{Lemma m lemma}
For each $m \in \mathbb{N}$, let $N_{m}=m^4$. 
Letting $\varepsilon\in (0,1/4)$, 
for each $i$ in the range 
$-m^{\varepsilon/3}/10 \leqslant i \leqslant m^{\varepsilon/3}$ 
let $\beta_{m,i} = i/m^{\varepsilon/3}$ and $L_{m,i} = N_m^{\beta_{m,i}}$. Let $s_m$ be a real quantity that satisfies 
$ m^{-\varepsilon/10}<s_m < m^{\varepsilon/10}$ 
for large enough $m$. Then, for almost all $\alpha \in [0,1]$, for all $m$ and for all $i$ in the range $-m^{\varepsilon/3}/10 \leqslant i \leqslant m^{\varepsilon/3}$ we have \begin{equation}
\label{subsequence poisson}
R_2(\alpha,L_{m,i},N_m, 1_{[-s_m,s_m]}) 
= 2s_mL_{m,i} + O_{\alpha,\varepsilon}\Big( \frac{L_{m,i}}{ m^{\frac{13}{15} - \frac{\varepsilon}{3}}}\Big).
\end{equation}
\end{Lemma}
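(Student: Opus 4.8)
The plan is to prove, for a \emph{fixed} pair $(m,i)$, a second-moment (variance) bound for the pair correlation function over $\alpha$, and then to union-bound over the countable family of pairs $(m,i)$ using Borel--Cantelli. Since there are only $O(m^{\varepsilon/3})$ values of $i$ for each $m$, and only countably many $m$, the genericness will survive provided the variance bound for each pair decays fast enough in $m$. Concretely, I would fix $m$, set $N = N_m = m^4$, $L = L_{m,i}$, $s = s_m$, and study
\[
R_2(\alpha,L,N,1_{[-s,s]}) = \frac{1}{N}\sum_{\substack{1 \leqslant x,y \leqslant N \\ x \neq y}} 1_{\|\alpha(x^2-y^2)\| \leqslant sL/N}.
\]
Writing $n = x^2 - y^2$ (with the appropriate multiplicity $r(n) = \#\{(x,y) : x^2 - y^2 = n,\ x,y \leqslant N\}$, which is $N^{o(1)}$ on average and controlled by the divisor bound), this becomes $\tfrac1N \sum_{n} r(n) 1_{\|\alpha n\| \leqslant sL/N}$. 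The expected value over $\alpha \in [0,1]$ is $\tfrac1N \sum_n r(n) \cdot 2sL/N = 2sL(1 + O(1/N))$, matching the target main term. So the work is entirely in the variance.

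The key step is to bound $\mathbb{E}_\alpha\big[(R_2(\alpha,L,N,1_{[-s,s]}) - 2sL)^2\big]$. Expanding the square and using Fourier expansion of the indicator $1_{\|\cdot\| \leqslant sL/N}$ (as in \cite{RS98}, \cite{ALL17}, \cite{Ru18}), this reduces to estimating sums of the shape
\[
\sum_{0 < |h| \lesssim N/(sL)} \Big(\frac{\min(sL/N, 1/|h|)}{1}\Big)^2 \cdot \#\{n_1, n_2 : h(n_1 - n_2) = 0 \text{ or small}\}
\]
weighted by $r(n_1)r(n_2)$; the diagonal-type terms $n_1 = n_2$ (or, more precisely, $h n_1 = h' n_2$ with coincidences among the square-difference representations) contribute the main error, and off-diagonal terms are controlled by the divisor bound together with the restriction $L \leqslant N^{1-\varepsilon}$. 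A careful bookkeeping — essentially the computation already present in the cited literature, transcribed with $L$ allowed to grow like $N^\beta$ and $s$ allowed to vary polylogarithmically — should yield a variance bound of size $O_\varepsilon(L^2 N^{-c\varepsilon})$ for some absolute constant $c > 0$; tracking constants in the standard argument gives $c$ comfortably large enough to beat the $m^{\varepsilon/3}$-many choices of $i$ and to produce, after Chebyshev and Borel--Cantelli (recalling $N_m = m^4$, so $N^{-c\varepsilon} = m^{-4c\varepsilon}$), the pointwise error term $O_{\alpha,\varepsilon}(L_{m,i} m^{-(13/15 - \varepsilon/3)})$ claimed.

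The main obstacle is the same one that always appears in these metric pair-correlation arguments: controlling the second moment requires understanding $\sum_{n_1, n_2} r(n_1) r(n_2) 1_{q \mid (n_1 - n_2)}$-type quantities where $n_i$ range over values of $x^2 - y^2$, i.e. one must show that the representation function $r(n)$ does not conspire with the arithmetic progression structure coming from the Fourier frequencies. Here this is handled by the divisor bound (each $n$ has $N^{o(1)}$ representations) plus the crucial hypothesis $L \leqslant N^{1-\varepsilon}$, which guarantees that the relevant moduli/frequencies are large enough that the off-diagonal terms are genuinely lower order. The bookkeeping to extract an error term with an explicit power saving of the stated strength (rather than merely $o(1)$) is the only genuinely fiddly part, but it is routine given the structure above and the precise shape of the Fourier coefficients of the sharp-cutoff indicator (one may smooth the cutoff at scale slightly finer than $sL/N$, at negligible cost, to make the Fourier tails summable). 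I would present this computation compactly, citing \cite{ALL17} and \cite{Ru18} for the parts that are verbatim and highlighting only the modifications needed to let $L$ and $s$ vary.
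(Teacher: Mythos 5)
Your proposal is correct and follows essentially the same route as the paper: compute the $\alpha$-expectation, bound the variance via a Fourier expansion of a one-sided smoothed (or Selberg/Vaaler-polynomial) approximation to the indicator, control the resulting counting problem by the divisor bound, and finish with Chebyshev and Borel--Cantelli over the countable family of pairs $(m,i)$. The only cosmetic difference is that the paper uses the Selberg trigonometric polynomials of degree $J_{m,i}=\lfloor N_m w(N_m)/L_{m,i}\rfloor$ for the one-sided approximation and then optimises the auxiliary growth function $w(N_m)$ at the end, whereas you propose a mollified indicator at a slightly finer scale; these are interchangeable for this purpose.
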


\begin{proof}[Proof of Lemma \ref{Lemma m lemma}]

Let $I=\left(\gamma,\delta\right)$ be an arc on the torus 
$\mathbb{R}/ \mathbb{Z}$
such that $0<\gamma-\delta<1$. Let $J\geqslant 1$ be an integer. To proceed
we introduce trigonometric polynomials $S_{J}^{\pm}\left(x\right)$, of
degree $J$, which approximate the indicator function $\chi_{I}$
from above and below. Selberg, and also Vaaler, constructed
such polynomials, cf. Montgomery \cite[p. 5--6]{M94}. Indeed, there exists
\[
S_{J}^{\pm}\left(x\right)=
\sum_{\left|j\right|\leqslant J}s_{J}^{\pm}\left(j\right)e(jx)
\]
satisfying
\[
S_{J}^{-}\left(x\right)
\leqslant
\chi_{I}\left(x\right)
\leqslant S_{J}^{+}\left(x\right)
\qquad(x\in\mathbb{R}/\mathbb{Z})
\]
such that 
\[
s_{J}^{\pm}\left(0\right)=\delta - \gamma \pm\frac{1}{J+1},
\qquad\left|s_{J}^{\pm}\left(j\right)\right|
\leqslant \frac{1}{J+1}
+\min\left(\delta-\gamma,\frac{1}{\pi\left|j\right|}\right)
\qquad\left(0<\left|j\right|\leqslant J\right).
\]
For our purposes, given $N_m$ and some growth function $w(N_m) \geqslant 1$, to be specified later, we specify 
$J_{m,i}= \lfloor N_mw(N_m)/L_{m,i} \rfloor $,
$\gamma_{m,i}=-s_mL_{m,i}/N_m$, and $\delta_{m,i}=s_mL_{m,i}/N_m$. Note that the definitions of $s_m$ and $L_{m,i}$ in the statement of the theorem imply that these choices yield a valid arc on the torus. Then, defining\[
R_{2}^{\pm}(\alpha,L_{m,i},N_m,1_{[-s_m,s_m]})
:=\frac{1}{N_m}\sum_{x\neq y\leqslant N_m}S_{J_{m,i}}^{\pm}(\alpha(x^{2}-y^{2})),
\]
we can control $R_{2}$ from above and below via
\begin{equation}
\label{above and below}
R_{2}^{-}(\alpha,L_{m,i},N_m,1_{[-s_m,s_m]}) 
\leqslant R_{2}(\alpha,L_{m,i},N_m,1_{[-s_m,s_m]})
\leqslant R_{2}^{+}(\alpha,L_{m,i},N_m,1_{[-s_m,s_m]}).
\end{equation}

Let \[ E^{\pm}(L_{m,i},N_m,s_m) := \int\limits_{0}^1 R_2^{\pm}(\alpha,L_{m,i},N_m,1_{[-s_m,s_m]}) \, d\alpha\] be the expected value of $R_2^{\pm}(\alpha,L_{m,i},N_m,1_{[-s_m,s_m]})$. It is easy to see that 
\begin{equation}
\label{expectation estimate}
E^{\pm}(L_{m,i},N_m,s_m) = 2s_mL_{m,i}+ O(L_{m,i}/w(N_m)) + O(s_mL_{m,i}/N_m).
\end{equation} Furthermore, by using orthogonality, we also see that the variance \[
\int_{0}^{1} (R_{2}^{\pm}(\alpha,L_{m,i},N_m,1_{[-s_m,s_m]}) - E^{\pm}(L_{m,i},N_m,s_m))^2 d\alpha\] of $R_2^{\pm}$ is at most \[\leqslant \frac{1}{N_m^2}\sum_{\substack{x_1\neq y_1 \leqslant N_m\\ x_2\neq y_2 \leqslant N_m}}\,
\sum_{\substack{0< \vert j_1\vert,\vert j_2\vert \leqslant J_{m,i}\\ 
j_1(x_1^2 - y_1^2) + j_2 (x_2^2 -y_2^2) = 0}} 
\vert s_{J_{m,i}}^{\pm}(j_1) s_{J_{m,i}}^{\pm} (j_2) \vert.\] (Note that there are no contributions from terms in which $j_1 = 0$ and $j_2 \neq 0$, since the condition $j_2(x_2^2 - y_2^2) = 0$ cannot be satisfied.)\\

Since
$\vert s_{J_{m,i}}^{\pm}(j) \vert \leqslant (2s_m + 1) L_{m,i}/N_m$ 
for all $j$ in the range $0<\vert j\vert  \leqslant J_{m,i}$, we can bound
the variance above by $O((s_m + 1)^2)$ times
\begin{equation}\label{eq: variance for pair corr at scale < 1}
\frac{L_{m,i}^2}{N_m^{4}} 
\Bigg \vert \Bigg\{(j_1,x_1,y_1,j_2,x_2,y_2) \in \mathbb{Z}^6 : 
j_1 (x_1^2 -y_1^2 ) = j_2 (x_2^2 -y_2^2 ),
\begin{array}{lr} 
1\leqslant x_k\neq y_k \leqslant N_m,\\
0 < \vert j_k \vert \leqslant \frac{N_m w(N_m)}{L_{m,i}},\\ 
k=1,2
\end{array}
\Bigg\}
\Bigg\vert.
\end{equation}
\noindent Let us fix the first three variables above, 
that is $j_1,x_1,y_1$. Then 
by writing $x_2^2-y_2^2= d_1 d_2$, with $d_1 = x_2-y_2$ and $d_2 = x_2+y_2$, 
we deduce that $d_k \mid j_1 (x_1^2 -y_1^2 ) $ for $k = 1,2$. 
By the divisor bound, that there are at most
$N_m^{o(1)}$ many possibilities for $d_1,d_2$ (provided that $w(N_m) \leqslant N_m^{O(1)}$). Moreover,
any choice of $d_1,d_2$ uniquely determines
the variables $x_1,y_1$ via $ x_2 = (d_1 + d_2)/2 $
and $ y_2 = (d_2- d_1)/2 $. Further, we note that 
$j_2$ is determined up to $\ll N_m^{o(1)}$ many choices. 
The upshot is that given one of the $O( N_m^3 w(N_m)/L_{m,i})$ 
many admissible choices for $j_1,x_1,y_1$, the second block 
of variables $j_2,x_2,y_2$ is determined up to 
$O(N_m^{o(1)})$ many possibilities. 
Therefore the variance of $R_2^\pm$ is at most
$O((s_m+1)^2L_{m,i} w(N_m) N_m^{-1 + o(1)})$.\\

For the ease of exposition, we let $\kappa_{m,i}^{\pm}=\vert 
R_{2}^{\pm}(\alpha,L_{m,i},N_{m},1_{[-s_m,s_m]})
- E^{\pm}(L_{m,i},N_m,s_m)\vert$.
We infer, by Chebychev's inequality, that 
\[
\mathbb{P}
\left(\alpha\in\left[0,1\right]:
\kappa_{m,i}^{\pm} \geqslant \frac{L_{m,i}}{w(N_m)} 
\right) \leqslant
\frac{(s_m+1)^2w(N_m)^3 N_m^{o(1)}}{N_mL_{m,i}} \leqslant \frac{(s_m+1)^2w(N_m)^3 N_m^{o(1)}}{N_m^{9/10}}.
\] 
Choose the growth function $w(N_m)$ to be
\[
w(N_m) = \Big(\frac{N_m^{9/10}}{m^{1 + \varepsilon}} \Big)^{1/3}.
\] Therefore, since $s_m < m^{\varepsilon/10}$, we conclude that
\begin{align*}
\mathbb{P}\left(\alpha\in\left[0,1\right]:
\kappa_{m,i}^{\pm}\geqslant
\frac{L_{m,i}}{w(N_m)} \right) 
 \ll_{\varepsilon} \frac{1}{m^{1 + \frac{\varepsilon}{2}}}.
\end{align*} Summing over $i$ in the range
$-m^{\varepsilon/3}/10 \leqslant i \leqslant m^{\varepsilon/3}$, with the union bound we get 
\begin{align*}
\mathbb{P}\Big(\alpha\in\left[0,1\right]:
\exists \, i \in [-m^{\varepsilon/3}/10, m^{\varepsilon/3}]
\text{ s.t. }
\kappa_{m,i}^{\pm} \geqslant
\frac{L_{m,i}}{w(N_m)} \Big)
\ll_\varepsilon \frac{1}{m^{1 + \frac{\varepsilon}{6}}}.
\end{align*}

Recalling our estimate (\ref{expectation estimate}), 
the first Borel--Cantelli lemma implies that,
for almost every $\alpha\in\left[0,1\right]$,
the following relation holds for all $m\geqslant 1$ and all admissible $i \in [-m^{\varepsilon/3}/10, m^{\varepsilon/3}]$:
\[
\frac{1}{N_{m}}\sum_{x\neq y\leqslant N_{m}}
R_2^{\pm}(\alpha,L_{m,i},N_m, 1_{[-s_m,s_m]}) = 
2s_mL_{m,i} + O_{\alpha, \varepsilon}\Big( \frac{L_{m,i}}{w(N_m)}\Big) + O_{\alpha}\Big(\frac{s_m}{N_m}\Big).
\]
From (\ref{above and below}), and substituting in the explicit growth function $w(N_m)$, the lemma follows. 
\end{proof}

\begin{proof}[Proof of Theorem \ref{thm: pair corr at scale < 1}]
We begin with the trivial observation that, by combining $s$ and $L$ into a single parameter, it is enough to show that 
for almost all 
$\alpha \in [0,1]$, for all $N$ and for all $L$ in the range 
$ N^{-1/11}\leqslant L \leqslant N^{1 - \varepsilon/2}$,
\begin{equation}\label{eq: comined parameters}
R_2(\alpha,L,N,1_{[-1,1]}) = 2L (1+ O_{\alpha,\varepsilon}(N^{-\frac{\varepsilon}{13}})).
\end{equation}
Now, for each $N$, choose $m$ such that $N_{m}\leq N<N_{m+1}$, 
where $N_m = m^4$ as in Lemma \ref{Lemma m lemma}. 
We put $\theta_{m}=N_{m+1}/N_{m}$. Then, for any $L$,
\[
R_{2}(\alpha,L,N,1_{[-1,1]})
\leqslant \frac{N_{m+1}}{N}
R_{2}\Big(\alpha,L,N_{m+1},1_{\frac{N_{m+1}}{N}[-1,1]}\Big)
\leqslant \theta_{m}R_{2}(\alpha,L,N_{m+1},1_{\theta_{m}[-1,1]}),
\]
and similarly
\[
R_{2}(\alpha,L,N,1_{[-1,1]})
\geqslant \frac{N_{m}}{N}
R_{2}\Big(\alpha,L,N_{m},1_{\frac{N_{m}}{N}[-1,1]}\Big)
\geqslant \theta_{m}^{-1}R_{2}(\alpha,L,N_{m},1_{\theta_{m}^{-1}[-1,1]}).
\] 

For each  
$ N^{-1/11}\leqslant L \leqslant N^{1 - \varepsilon/2}$, 
there exists an $i$ in the range 
$ - m^{\varepsilon/3}/10 \leqslant i\leqslant m^{\varepsilon/3}$ such that 
$L_{m,i} \leqslant L \leqslant L_{m,i+1}$, where $L_{m,i}$ is as in Lemma \ref{Lemma m lemma}. 
Now we record that the upper and lower bounds above satisfy
\begin{align*}
& R_{2}(\alpha,L,N_{m},1_{\theta_{m}^{-1}[-1,1]}) \geqslant
R_{2}(\alpha,L_{m,i},N_{m},1_{\theta_{m}^{-1}[-1,1]}),\\
& R_{2}(\alpha,L,N_{m+1},1_{\theta_{m}[-1,1]}) \leqslant 
R_{2}(\alpha,L_{m,i+1},N_{m+1},1_{\theta_{m}[-1,1]}).
\end{align*}
Moreover, Lemma \ref{Lemma m lemma} 
implies that there is a set $\Omega_{\varepsilon} \subset [0,1]$, 
with full measure, such that, if $\alpha \in \Omega_{\varepsilon}$
and $\varepsilon \in (0,1/4]$, then for all $m\geqslant1$ and $i$ in the range
$ - m^{\varepsilon/3}/10 \leqslant i \leqslant m^{\varepsilon/3}$, 
\[ R_2(\alpha,L_{m,i},N_m, 1_{\theta_m[-1,1]}) 
=  2\theta_m L_{m,i}(1+ O_{\alpha,\varepsilon}
(m^{-\frac{13}{15} + \frac{\varepsilon}{3}})).\] 

By using that
$ L_{m,i+1}/L_{m,i} = 1 + O_{\varepsilon}(N^{-\varepsilon/13})$ and
also that
$ \theta_{m}= 1+ O(m^{-1})$, we infer that
\begin{align*}
R_{2}(\alpha,L,N,1_{[-1,1]})& \leqslant 2L(1+ O_{\alpha,\varepsilon}
(N^{-\frac{1}{4}}))
(1+ O_{\alpha,\varepsilon}(N^{-\frac{\varepsilon}{13}}))(1 + O_{\alpha,\varepsilon}(m^{-\frac{13}{15} + \frac{\varepsilon}{3}})) \\
& \leqslant 2L(1 + O_{\alpha,\varepsilon}(N^{-\frac{\varepsilon}{13}})).
\end{align*}
Similarly, we conclude that 
$$
R_{2}(\alpha,L,N,1_{[-1,1]}) \geqslant
2 L(1+ O_{\alpha,\varepsilon}(N^{-\frac{\varepsilon}{13}})).
$$
Combining these two estimates shows \eqref{eq: comined parameters},
thus completing the proof of Theorem \ref{thm: pair corr at scale < 1}. 
\end{proof}

\section{Discrepancy and $k$-point correlation functions at scale $N^{-\beta}$}\label{Appendix B}
The purpose of the present section is to record a few
simple observations concerning the relationship between
discrepancy and $k$-point correlation functions. 

\begin{Definition}
\label{Definition general correlation functions}
Let $(x_n)_{n=1}^{\infty}$ be a sequence of points in $[0,1)$. Let $k \geqslant 2$ be a natural number, and let $g: \mathbb{R}^{k-1} \longrightarrow [0,1]$ be a compactly supported function. Then the $k^{th}$ correlation function $R_k((x_n)_{n=1}^\infty, L,N,g)$ is defined to be \[R_k((x_n)_{n=1}^\infty, L,N,g): = \frac{1}{N} \sum\limits_{\substack{n_1,\dots,n_k \leqslant N \\ \text{distinct}}} 
g\Big(\frac{N}{L}\{x_1 - x_2\}_{\mathrm{sgn}},
\frac{N}{L}\{x_2 - x_3\}_{\mathrm{sgn}}, \dots, 
\frac{N}{L} \{ x_{k-1} - x_k \}_{\mathrm{sgn}}\Big),\] where \[ \{\cdot\}_{\mathrm{sgn}}:\mathbb{R} \longrightarrow (-1/2,1/2]\] denote the signed distance to the nearest integer. 
\end{Definition}
\noindent The main point we are conveying here is that, as expected, the correlations are controlled on the scales
in which the discrepancy allows us to count points asymptotically. 
\begin{Lemma}\label{lem: discrepancy}
(a) Let $D_N$ denote the discrepancy of the sequence $(x_n)_{n=1}^\infty$
in $[0,1)$, and suppose
$\sup \{ g>0: D_N \ll_{g} N^{-g} \, \text{for all } N\} = \gamma > 0$. Let $k \geqslant 2$ be a natural number, 
and let $Y$ be a uniformly distributed random variable modulo $1$. 
Let $\varepsilon > 0$ be suitably small in terms of $\gamma$, and for all $N \in \mathbb{N}$ and $L \in \mathbb{R}$ satisfying $L \leqslant N$
let\[ W((x_n)_{n=1}^\infty,L,N): = \vert \{ n \leqslant N: x_n \in [Y,Y+L/N] \, \text{mod } 1 \}\vert.\] Then, if $L$ is in the range $N^{1 - \gamma + \varepsilon} < L \leqslant N$, we have 
\begin{equation}
\label{moment estimate}
\mathbb{E} W((x_n)_{n=1}^\infty,L,N)^k = L^k(1+O_{\varepsilon}(kN^{-\varepsilon/2})).
\end{equation} Furthermore, for all continuous functions $g:\mathbb{R}^{k-1} \longrightarrow [0,1]$ and for all $L$ in the range $N^{1 - \gamma + \varepsilon} < L < N^{1- \varepsilon}$,
\begin{equation}
\label{correlation estimate}
 R_k((x_n)_{n=1}^\infty, L,N,g) = (1 + o_{g,\varepsilon,k}(1))L^{k-1}\int g(\mathbf{w}) \, d\mathbf{w}
 \end{equation} as $N\rightarrow \infty$, where the error term is independent of the choice of parameters $L$. 
\\ 
(b) If $(a_n)_{n=1}^\infty$ 
is a strictly increasing sequence of positive integers, for almost every $\alpha \in [0,1]$, for all $N \in \mathbb{N}$ and for all $L \in \mathbb{R}$ in the range $N^{1/2 + \varepsilon} < L \leqslant N$,  
the sequence $$(\alpha a_n \text{ mod }1)_{n=1}^{\infty}$$
satisfies estimates (\ref{moment estimate}) and (\ref{correlation estimate}). 
\end{Lemma}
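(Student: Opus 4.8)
The plan is to reduce part (b) to part (a), and to prove part (a) from a single elementary consequence of the discrepancy hypothesis: the \emph{pointwise} counting estimate $\bigl|\,|\{n\leqslant N:x_n\in J\}|-N|J|\,\bigr|\leqslant 2ND_N$, valid for \emph{every} arc $J\subseteq\mathbb{R}/\mathbb{Z}$, uniformly in the position of $J$. For part (b), the Erd\H{o}s--Koksma theorem \cite{EK49} gives, for each $\varepsilon>0$ and almost all $\alpha$, the bound $D_N((\alpha a_n\,\text{mod }1)_{n=1}^{\infty})\ll_{\varepsilon}N^{-1/2+\varepsilon}$; intersecting the null exceptional sets over $\varepsilon=1/m$ ($m\in\mathbb{N}$) produces a single full-measure set of $\alpha$ on which this holds for all $\varepsilon>0$, so that the exponent $\gamma$ in part (a) satisfies $\gamma\geqslant 1/2$. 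Since $N^{1-\gamma+\varepsilon}\leqslant N^{1/2+\varepsilon}$, part (a) then yields (\ref{moment estimate}) and (\ref{correlation estimate}) on the ranges claimed in (b), so it remains to prove part (a).

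For the moment estimate (\ref{moment estimate}) I would fix $g=\gamma-\varepsilon/2<\gamma$, so that $ND_N\ll_{\varepsilon}N^{1-\gamma+\varepsilon/2}$, and apply the pointwise estimate to the (possibly wrap-around) arc $J=[Y,Y+L/N]$. Because $L>N^{1-\gamma+\varepsilon}$ this gives, \emph{uniformly in $Y$}, the bound $|W((x_n)_{n=1}^{\infty},L,N)/L-1|\ll_{\varepsilon}N^{-\varepsilon/2}$. Using $(1+v)^{k}=1+O(k|v|)$ for $|v|\leqslant(2k)^{-1}$ and integrating over $Y$ then gives (\ref{moment estimate}) once $N$ is large in terms of $k$ and $\varepsilon$; for the remaining finitely many $N$ the trivial bound $W\leqslant N$ suffices.

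For the correlation estimate (\ref{correlation estimate}) I would first approximate a general continuous compactly supported $g$ from above and below by finite linear combinations of indicator functions of boxes $\prod_{j=1}^{k-1}[s_j,t_j]$, using monotonicity of $R_k$ in the test function together with $\int(g^{+}-g^{-})\to 0$, thereby reducing to $g=1_{\prod_j[s_j,t_j]}$. For such $g$, the quantity $N\cdot R_k((x_n)_{n=1}^{\infty},L,N,g)$ counts the distinct tuples $(n_1,\dots,n_k)\in[N]^k$ with $\{x_{n_j}-x_{n_{j+1}}\}_{\mathrm{sgn}}\in[s_jL/N,t_jL/N]$ for $1\leqslant j\leqslant k-1$. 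Since $L<N^{1-\varepsilon}$, for large $N$ each such condition confines $x_{n_{j+1}}$ to an honest arc of length $(t_j-s_j)L/N$ centred at a point depending only on $x_{n_j}$, so I would count the tuples coordinate by coordinate: there are $N$ choices for $n_1$, and given $n_1,\dots,n_j$ there are $(t_j-s_j)L+O(ND_N+k)$ admissible $n_{j+1}$ by the pointwise discrepancy estimate (the $O(k)$ excluding $n_1,\dots,n_j$). As $ND_N+k=o(L)$ on this range, the $k-1$ nested estimates multiply to give $N\cdot R_k=NL^{k-1}\prod_j(t_j-s_j)(1+o_{g,\varepsilon,k}(1))$; since $\int g=\prod_j(t_j-s_j)$, this is (\ref{correlation estimate}) for boxes, hence in general.

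The only genuinely delicate point is this iterated count: I must confirm that the nested sets really are arcs of length $(t_j-s_j)L/N$ — which is exactly where $L<N^{1-\varepsilon}$ enters, guaranteeing $\max_j(|s_j|,|t_j|)L/N<1/2$ for large $N$ — and that the multiplicative accumulation of the $k-1$ relative errors $O((ND_N+k)/((t_j-s_j)L))$ stays $o(1)$; both are immediate because $g$, and hence each $t_j-s_j$, is a fixed positive constant. Everything else — the sandwiching, the $N/L$ rescaling, and the bookkeeping relating $\mathbb{E}W^k$ to correlation-type sums — is routine, in the spirit of the reduction carried out in Section \ref{Section reduction to correlation functions}.
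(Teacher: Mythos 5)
Your proposal is correct and follows essentially the same route as the paper's own proof: part (b) is deduced from Erd\H{o}s--Koksma together with part (a); the moment estimate \eqref{moment estimate} comes from the pointwise bound $\bigl|\,|\{n\leqslant N:x_n\in J\}|-N|J|\,\bigr|\ll ND_N$ applied to $J=[Y,Y+L/N]$ and raised to the $k$-th power; and the correlation estimate \eqref{correlation estimate} is reduced to box indicators (via sandwiching by step functions) and then established by counting the tuples coordinate-by-coordinate, using the discrepancy bound at each of the $k-1$ nested arcs, with $L<N^{1-\varepsilon}$ guaranteeing that each constraint defines an honest arc of length $(t_j-s_j)L/N$. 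You spell out a few details the paper leaves implicit --- the Taylor bound $(1+v)^k=1+O(k|v|)$ with its validity range, the $O(k)$ correction enforcing distinctness of the indices, and the intersection over $\varepsilon=1/m$ when passing to a single full-measure set in part (b) --- but these are routine refinements of the same argument rather than a different approach.
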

\noindent \emph{Remark}: Part (b) of the lemma proves our earlier assertion (\ref{eq: consequence of discrepancy}). 
\begin{proof}
Fix a small $\varepsilon>0$ throughout this proof.
For part (a), the proof of (\ref{moment estimate}) is trivial. Indeed, by the discrepancy estimate we have \[ \vert \{n \leqslant N: x_n \in [Y,Y+L/N] \, \text{mod } 1\}\vert = L + O(ND_N) = L + O_{\varepsilon}(N^{1 - \gamma + \varepsilon/2}) = L(1+O_{\varepsilon}(N^{- \varepsilon/2})).\] Raising to the $k^{th}$ power and averaging over $Y$, we obtain (\ref{moment estimate}). 

To prove the correlation estimate (\ref{correlation estimate}), by approximating the function $g$ by step functions 
we see that it is enough to prove it
in the case when $g$ is the indicator function 
of a box \[[s_1,t_1] \times \dots \times [s_{k-1}, t_{k-1}].\] 
We may assume without loss of generality that $N$ is large enough 
so that $(t_i - s_i)L/N \leqslant 1$ for all $i \leqslant {k-1}$. 
(This is why it is important for the correlation estimate 
to preclude the case $L = N$.) Then, fixing $n_k$, 
we see that $R_k((x_n)_{n=1}^\infty, L,N,g)$ 
counts the number of $x_{n_{k-1}}\neq x_{n_k}$ 
such that $x_{n_{k-1}} \in [s_{k-1}L/N + x_{n_k}, 
t_{k-1} L/N + x_{n_k}] \, \text{mod } 1$, 
times the number of $x_{n_{k-2}} \neq x_{n_{k-1}} ,x_{n_{k}}$ 
such that 
$x_{n_{k-2}} \in [s_{k-2}L/N + x_{n_{k-1}}, t_{k-2} L/N 
+ x_{n_{k-1}}]\, \text{mod } 1$, etc. 
By the discrepancy estimate, the total number of choices is 
\[ ((t_{k-1} - s_{k-1})L + O_{\varepsilon}(LN^{-\varepsilon/2})) 
\times ((t_{k-2} - s_{k-2})L + O_{\varepsilon}(LN^{-\varepsilon/2})) 
\times \dots \times ((t_1 - s_1)L + O_{\varepsilon}(LN^{-\varepsilon/2})).
\] 
Summing over all $n_k$ and then normalising by $1/N$, we have 
\[ R_k((x_n)_{n=1}^\infty, L, N, g) = 
\Big( \prod\limits_{i=1}^{k-1} (t_i - s_i) \Big) 
L^{k-1}(1 + O_{\varepsilon, g}(k N^{-\varepsilon/2}))\] as desired. 

This proves part (a) of the assertion. 
The remaining part follows by recalling that
a classical (and far more general) result of 
Erd\H{o}s and Koksma \cite[Thm. 2]{EK49} 
furnishes an upper bound on the discrepancy of 
$(\alpha a_n \text{ mod }1)_{n=1}^{\infty}$ 
of the quality $N^{-1/2 + \varepsilon}$,
for each fixed $\varepsilon >0$ 
and for almost every $\alpha \in [0,1]$. 
\end{proof}

Steinerberger \cite{S18} raised the question 
of whether `most sequences'\footnote{The precise meaning 
of the word `most' was left open for interpretation
by Steinerberger, and was already put in quotation marks
in the original paper.} have 
uniform pair correlations at some scale $0<\beta < 1$. 
The above part (b) answers Steinerberger's question in a strong sense.
Further, it seems worthwhile to record the following consequence.
To state it, recall that for any irrational $\alpha\in[0,1]$ there is a unique sequence of positive 
integers $\alpha_n \geq 1$ such that 
\begin{equation*}
\label{eq:contfrac}
\alpha = \lim_{n \rightarrow \infty}\cfrac{1}{\alpha_1 + \cfrac{1}{\alpha_2 + \cfrac{1}{\ddots + \cfrac{1}{\alpha_n}}}},
\end{equation*}
where $\alpha_n$ is called the $n^{th}$ {\em partial quotient of $\alpha$}. Writing the fraction \[\cfrac{1}{\alpha_1 + \cfrac{1}{\alpha_2 + \cfrac{1}{\ddots + \cfrac{1}{\alpha_n}}}}=: \frac{p_n}{q_n}\] in lowest terms, we call $q_n$ a \emph{convergent denominator of $\alpha$}. 
\begin{Corollary} 
\label{Corollary continued fraction}
Let $\alpha_n$ be the $n^{th}$ partial quotient
of the irrational number $\alpha\in[0,1]$. Given $N\geqslant 1$, let $i(N)$
be such that the convergent denominator $q_{i(N)}$
of $\alpha$ satisfies 
$q_{i(N)} \leqslant N < q_{i(N)+1}$.
If for each $\varepsilon>0$ we have $$ A_{N}(\alpha)
=\sum_{j \leqslant i(N)} \alpha_{j} \ll N^{\varepsilon},$$
then the Kronecker sequence
$(\alpha n)_{n=1}^\infty$ satisfies \eqref{correlation estimate},
for any $k\geqslant 2$ and for any scale $L$ 
such that $N^{\delta} <L < N$ for any fixed $\delta \in (0,1)$
(the $o(1)$ term in \eqref{correlation estimate}
then also depends on $\delta$).
\end{Corollary}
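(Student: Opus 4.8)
The plan is to deduce Corollary \ref{Corollary continued fraction} from part (b) of Lemma \ref{lem: discrepancy}, which asserts that estimate \eqref{correlation estimate} holds for $(\alpha a_n \bmod 1)_{n=1}^\infty$ as soon as the discrepancy $D_N$ of the sequence decays faster than $N^{-1/2+\varepsilon}$; more precisely, the proof of Lemma \ref{lem: discrepancy}(a) only uses the hypothesis $D_N \ll_g N^{-g}$ for some fixed $g=\gamma>1/2$, and this is exactly what I need to verify here with $a_n = n$. So the entire task reduces to the following purely classical claim: under the hypothesis $A_N(\alpha) = \sum_{j\leqslant i(N)} \alpha_j \ll_\varepsilon N^\varepsilon$, the Kronecker sequence $(\alpha n)_{n=1}^\infty$ has discrepancy $D_N \ll_\varepsilon N^{-1+\varepsilon}$, which is much stronger than the $N^{-1/2+\varepsilon}$ needed to invoke Lemma \ref{lem: discrepancy}(a) (with $\gamma$ taken to be any number in $(1/2,1)$, so that any $\delta\in(0,1)$ is covered by the range $N^{1-\gamma+\varepsilon'}<L<N^{1-\varepsilon'}$).

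First I would recall the standard bound from the metric theory of continued fractions (see e.g. Kuipers--Niederreiter \cite[Ch. 2, Thm. 3.4]{KN12}), namely that for the Kronecker sequence one has
\[
N D_N((\alpha n)_{n=1}^\infty) \ll \sum_{j \leqslant i(N)+1} \alpha_j,
\]
where $i(N)$ is the index with $q_{i(N)} \leqslant N < q_{i(N)+1}$ as in the statement. (This is the three-distance-type estimate; one splits $[N]$ along the Ostrowski expansion of $N$ in the basis $(q_j)$, and each block of length roughly $q_j$ contributes discrepancy $O(\alpha_{j+1}/q_j)$ times $q_j$, giving $O(\alpha_{j+1})$.) Then I would observe that $A_{N'}(\alpha) \ll_\varepsilon (N')^\varepsilon$ for all $N'$ implies in particular, taking $N' = q_{i(N)+1}$ and noting $q_{i(N)+1} \leqslant (\alpha_{i(N)+1}+1) q_{i(N)} \leqslant (A_N(\alpha)+1)^2 N \ll_\varepsilon N^{1+2\varepsilon}$, that $\sum_{j\leqslant i(N)+1}\alpha_j = A_{q_{i(N)+1}}(\alpha) \ll_\varepsilon (q_{i(N)+1})^\varepsilon \ll_\varepsilon N^{2\varepsilon}$. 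Hence $D_N((\alpha n)_{n=1}^\infty) \ll_\varepsilon N^{-1+2\varepsilon}$.

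With this discrepancy bound in hand, I would finish by applying Lemma \ref{lem: discrepancy}(a) directly. Fix $\delta \in (0,1)$ and pick any $\gamma$ with $0 < 1-\gamma < \delta$, e.g. $\gamma = 1 - \delta/2$; the discrepancy hypothesis $D_N \ll_\varepsilon N^{-1+\varepsilon}$ certainly gives $\sup\{g>0 : D_N \ll_g N^{-g}\} = 1 \geqslant \gamma$, so Lemma \ref{lem: discrepancy}(a) applies. Choosing $\varepsilon' > 0$ small enough that $1-\gamma+\varepsilon' < \delta$ and $\delta < 1-\varepsilon'$ (possible since $\delta<1$), the range $N^{1-\gamma+\varepsilon'}<L<N^{1-\varepsilon'}$ contains the range $N^\delta < L < N$ minus a harmless top piece; in fact one should simply note that for $L$ with $N^\delta < L < N$ the moment estimate \eqref{moment estimate} applies on the whole range and the correlation estimate \eqref{correlation estimate} applies for $L < N^{1-\varepsilon'}$, and the remaining sliver $N^{1-\varepsilon'} \leqslant L < N$ is again covered by \eqref{moment estimate} (equivalently one re-runs the elementary sandwiching at the top of the range as in Section \ref{Section reduction to correlation functions}). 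This yields \eqref{correlation estimate} for $(\alpha n)_{n=1}^\infty$ for every $k \geqslant 2$ and every $L$ with $N^\delta < L < N$, with the $o(1)$ term depending on $\delta$ through the choice of $\gamma$ and $\varepsilon'$, as claimed.

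I do not anticipate a genuine obstacle here: the only mild subtlety is bookkeeping the relation between $i(N)$ and $i(q_{i(N)+1})$ so as to convert the hypothesis $A_N(\alpha) \ll_\varepsilon N^\varepsilon$ (stated at scale $N$) into the bound $\sum_{j \leqslant i(N)+1}\alpha_j \ll_\varepsilon N^\varepsilon$ that the discrepancy formula requires, and that is handled by the elementary inequality $q_{i(N)+1} \ll_\varepsilon N^{1+O(\varepsilon)}$ noted above. Everything else is a direct citation of the classical Kronecker discrepancy bound followed by a black-box application of Lemma \ref{lem: discrepancy}(a).
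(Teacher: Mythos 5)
Your proof takes the same route as the paper: cite the classical Ostrowski-type discrepancy bound for the Kronecker sequence, deduce $D_N \ll_{\varepsilon} N^{-1+\varepsilon}$ from the hypothesis on $A_N(\alpha)$, and then invoke Lemma~\ref{lem: discrepancy}(a). The paper's own proof is a one-liner doing exactly this (``As $D_N((\alpha n)_{n}) \ll_\varepsilon A_N(\alpha)$, cf.\ \cite[Eq.\ (3.18)]{KN12}, Lemma~\ref{lem: discrepancy} completes the proof''), so you are not doing anything genuinely different; you are just being more careful with the index bookkeeping between $\sum_{j\leqslant i(N)}\alpha_j$ (the hypothesis) and $\sum_{j\leqslant i(N)+1}\alpha_j$ (what the discrepancy bound actually requires), which is a worthwhile thing to make explicit.

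One small slip in your bookkeeping: the chain $q_{i(N)+1} \leqslant (\alpha_{i(N)+1}+1)q_{i(N)} \leqslant (A_N(\alpha)+1)^2 N$ is not justified as written, because $\alpha_{i(N)+1}$ is precisely the partial quotient \emph{not} appearing in the sum $A_N(\alpha)=\sum_{j\leqslant i(N)}\alpha_j$, so you cannot bound it by $A_N(\alpha)$. The conclusion $q_{i(N)+1}\ll_\varepsilon N^{1+O(\varepsilon)}$ is nonetheless correct: applying the hypothesis at $N'=q_{i(N)+1}$ (for which $i(N')=i(N)+1$) gives $\alpha_{i(N)+1}\leqslant A_{q_{i(N)+1}}(\alpha)\ll q_{i(N)+1}^\varepsilon$, and since $q_{i(N)+1}\leqslant(\alpha_{i(N)+1}+1)N$, this bootstraps to $q_{i(N)+1}^{1-\varepsilon}\ll N$, hence $q_{i(N)+1}\ll N^{1/(1-\varepsilon)}$. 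Replace your two-line bound with this self-consistent argument and the rest goes through. Your handling of the top sliver $L\in[N^{1-\varepsilon'},N)$ via \eqref{moment estimate} (or a direct sandwich) is the right fix for the fact that Lemma~\ref{lem: discrepancy}(a)'s correlation statement is only stated for $L<N^{1-\varepsilon}$, a point the paper's terse proof glosses over.
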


\begin{proof}
As $D_N((\alpha n \, \text{mod } 1)_{n=1}^\infty) 
\ll_{\varepsilon} A_N (\alpha)$, 
cf. \cite[Eq. (3.18)]{KN12}, Lemma \ref{lem: discrepancy}
completes the proof.
\end{proof}
It is well known 
(also with a higher-dimensional generalisations due to Beck \cite{B94})
that for almost every $\alpha\in[0,1]$ 
the discrepancy of the Kronecker sequence
is $\ll (\log N)^{1+\varepsilon}$, for each $\varepsilon >0$.
Thus the above corollary generalizes and sharpens 
a result of Skill and Wei{\ss} \cite{WS19} stating
that $(\phi n)_{n=1}^\infty$ has Poissonian pair correlations 
on each scale $\beta < 1$ where
$\phi$ denotes the Golden ratio $\frac{\sqrt{5}+1}{2}$.
Further, the condition on $A_N$ is known to be true for algebraic $\alpha$,
due to Roth's famous approximation theorem.

Finally we remark that, if one so wished, one could readily replace the
$N^{\varepsilon}$-terms in Corollary \ref{Corollary continued fraction} by appropriate powers of logarithms.\\

\section{The Rudnick-Sarnak obstruction}
\label{Appendix C}
In this final appendix, we detail an obstruction 
to studying the higher order correlation function $R_k(\alpha,L,N,g)$. 
This obstruction is a generalisation of a fundamental observation of 
Rudnick--Sarnak \cite[Section 4]{RS98}, which those authors made in the context of constant $L$ and
for the triple correlation function of $(\alpha n^2 \, \text{mod } 1)_{n=1}^\infty$.
We address the more general situation of
sequences of the shape $(\alpha n^d \, \text{mod } 1)_{n=1}^\infty$,
where $d\geqslant 2$ is a fixed integer. We are also interested in identifying the full range of $L$ in which the obstruction persists.

To this end, for a compactly supported smooth test function 
$g:\mathbb{R}^{k-1} \rightarrow \mathbb{R}$, we define the correlation function $R_{k}^{d}(\alpha,L,N, g)$ by \[R_{k}^{d}(\alpha,L,N, g): = \frac{1}{N}\sum\limits_{\substack{1 \leqslant x_1, \dots, x_k \leqslant N \\
\text{distinct}}} g\Big(\frac{N}{L} 
\{ \alpha (x_1^d - x_2^d)\}_{\mathrm{sgn}}, \dots, \frac{N}{L}
\{ \alpha (x_{k-1}^d - x_k^d)\}_{\mathrm{sgn}}\Big),\] where \[ \{ \cdot \}_{\mathrm{sgn}}:\mathbb{R} \longrightarrow (-1/2,1/2]\] denotes the signed distance to the nearest integer. Rudnick--Sarnak's approach to pair correlations, like in Appendix \ref{Appendix A}, involves showing that \[ \int\limits_0^1 \Big(R_2^d(\alpha,L,N,g) - L \frac{N-1}{N}\widehat{g}(0)\Big)^2 \, d\alpha = o_g(L^2)\] as $N \rightarrow \infty$. Therefore, in order to make a similar approach work for the 
higher $k$-point correlation functions, one would need 
\begin{equation}
\label{L2 convergence}\int\limits_{0}^{1} 
\Big(
R_k^d(\alpha,L,N,g)
- L^{k-1} \frac{(N)_{k}}{N^k}\widehat{g}(\mathbf{0})
\Big)^2 \, d\alpha = o_{d,k,g}(L^{2(k-1)})
\end{equation} 
as $N \rightarrow \infty$, where $\mathbf{0}$ is the zero-vector in $\mathbb{R}^{k-1}$
and $(N)_{k}=N(N-1)\ldots(N-k+1)$ abbreviates the $k^{th}$ falling factorial. Our purpose here is to show that for certain ranges of $d$, $k$ and $L$, equation (\ref{L2 convergence}) cannot hold. \\

Indeed, by applying the Poisson summation formula one may expand
$ R_k^d(\alpha,L,N,g) $ into a Fourier series 
\[ R_k^d(\alpha,L,N,g) = 
\sum\limits_{\ell \in \mathbb{Z}} 
c_{k,\ell}^d(L,N,g) e(\ell \alpha),\] 
with certain Fourier coefficients $c_{k,\ell}^d(L,N,g)$.
One may compute $c_{k,\ell}^d(L,N,g)$ explicitly. For a given vector $\mathbf{a}\in \mathbb{Z}^{k-1}$, let
$S_{\mathbf{a},k,\ell}^d(N)$ denote
the set of integer vectors $\mathbf{x}\in \mathbb{N}^{k}$,
with distinct components $1\leqslant x_i \leqslant N$, satisfying 
the Diophantine equation 
$$
a_1 (x_1^d - x_2^d)+\ldots + a_{k-1}(x_{k-1}^d-x_k^d)=\ell.
$$
One readily verifies that $ c_{k,\ell}^d(L,N,g)$ is of the special form 
\begin{equation}
\label{formula for c}
c_{k,\ell}^d(L,N,g) = 
\frac{L^{k-1}}{N^{k}}\sum_{\mathbf{a} \in \mathbb{Z}^{k-1}} 
\vert S_{\mathbf{a},k,\ell}^d(N) \vert \,\,
\widehat{g}\Big(\frac{L}{N} \mathbf{a}\Big).
\end{equation}From Parseval, we conclude that \[\int_0^1\Big(
R_k^d(\alpha,L,N,g)
- L^{k-1} \frac{(N)_{k}}{N^k}\widehat{g}(\mathbf{0})
\Big)^2 \, d\alpha = \sum\limits_{ \ell \neq 0} \vert c_{k, \ell}^d(L,N,g)\vert^2.\] 

Now let $\rho = N^{d+1 + \varepsilon}/L$. 
We observe that if $\vert \ell \vert \geqslant \rho $ and $S_{\mathbf{a},k,\ell}^d(N) \neq 0$ 
then  $\Vert \mathbf{a}\Vert_\infty \geqslant N^{1+\varepsilon}/L(k-1)$. Therefore, from the rapid decay of $\widehat{g}$ and the formula (\ref{formula for c}), we conclude that
$\vert c_{k,\ell}^d(L,N,g) \vert = O_{\varepsilon,g,k,K}(N^{-K})$ for such $\ell$. Hence 
\[\sum\limits_{\ell \neq 0} \vert c_{k,\ell}^d(L,N,g)\vert ^2  = \sum\limits_{0 < \vert \ell\vert \leqslant \rho} 
\vert c_{k,\ell}^d(L,N,g)\vert^2 
+ O_{\varepsilon,g,k,K}(N^{-K}).\]
To estimate the right hand side, we note that 
by the Cauchy--Schwarz inequality
$$
\rho \sum\limits_{ 0 < \vert \ell \vert \leqslant \rho} 
\vert c_{k,\ell}^d(L,N,g)\vert^2
\geqslant \bigg\vert \sum_{  0< \vert \ell \vert \leqslant \rho} 
c_{k,\ell}^d(L,N,g) \bigg\vert^2.
$$
The sum inside the absolute value on the right-hand side equals,
up to a term of size $O_{\varepsilon,g,k,K}(N^{-K})$, the quantity
\[
\sum\limits_{ \ell \in \mathbb{Z}} 
c_{k,\ell}^d(L,N,g) e(0) - L^{k-1} \frac{(N)_{k}}{N^k}\widehat{g}(\mathbf{0}),\] which is \[R_k^d(0,L,N,g) + O_{g}(L^{k-1}).\] Assuming that $g(\mathbf{0}) \neq 0$ and $L/N = o(1)$ as $N\rightarrow \infty$, this is equal to \[N^{k-1}(1+o(1))g(\mathbf{0})\] as $N\rightarrow \infty$.
Now, by combining these considerations, we conclude that 
\begin{align*}
\int\limits_{0}^{1} \Big(R_k^d(\alpha,L,N,g) - L^{k-1} \frac{(N)_k}{N^k} \widehat{g}(\mathbf{0})\Big)^2 
\, d\alpha
\gg_{\varepsilon,g,k} \frac{1}{\rho} N^{2(k-1)} \vert g(\mathbf{0})\vert^2
= L N^{2k - d-3 - \varepsilon} \vert g(\mathbf{0})\vert^2.
\end{align*}
If (\ref{L2 convergence}) is to hold for all functions $g$, 
for each $\varepsilon > 0$ we must have 
$$
L\gg_{g,\varepsilon,k} N^{\frac{2k-d-3-\varepsilon}{2k-3}} 
= N^{1 - \frac{d+\varepsilon}{2k-3}}.
$$ In particular, for $k=3$ and $d=2$ the convergence (\ref{L2 convergence}) fails unless
$L\gg N^{\frac{6-5-\varepsilon}{6-3}}= N^{\frac{1-\varepsilon}{3}}$. This justifies the statements we made in the introduction to the effect that the Rudnick--Sarnak obstruction for triple correlations extends throughout the range $L<N^{1/3}$. 

We also note, however, that as soon as \[ d > 2k-3\] there is no such obstruction (for $L$ constant in terms of $N$).\\

The failure of (\ref{L2 convergence}) is an artefact of the integrand having a large spike when $\alpha \approx 0$ (and more generally the integrand has large spikes when $\alpha$ is very well-approximated by rationals with small denominators). One wonders whether $L^2$-convergence (\ref{L2 convergence}) can be recovered by restricting to the `minor arcs', but this also appears to be a difficult problem.

\bibliography{tripbib}

\begin{thebibliography}{10}

\bibitem{AL16}
C.~Aistleitner and G.~Larcher.
\newblock Metric results on the discrepancy of sequences $(a_{n} \alpha)_{n
  \geq 1}$ modulo one for integer sequences $(a_{n})_{n \geq 1}$ of polynomial
  growth.
\newblock {\em Mathematika}, 62(2):478--491, 2016.

\bibitem{ALL17}
C.~Aistleitner, G.~Larcher, and M.~Lewko.
\newblock Additive energy and the {H}ausdorff dimension of the exceptional set
  in metric pair correlation problems.
\newblock {\em Israel J. Math.}, 222(1):463--485, 2017.

\bibitem{B94}
J.~Beck.
\newblock {Probabilistic diophantine approximation, I. Kronecker sequences}.
\newblock {\em Ann. of Math.}, 140(2):449--502, 1994.

\bibitem{BT77}
M.~V. Berry and M.~Tabor.
\newblock Level clustering in the regular spectrum.
\newblock {\em Proc. Royal Soc. London. A. Math. Phy. Sci.},
  356(1686):375--394, 1977.

\bibitem{BZ00}
F.~Boca and A.~Zaharescu.
\newblock Pair correlation of values of rational functions (mod $p$).
\newblock {\em Duke Math. J.}, 105(2):267--307, 2000.

\bibitem{Ch49}
K.-L. Chung.
\newblock An estimate concerning the {K}olmogoroff limit distribution.
\newblock {\em Trans. Amer. Math. Soc.}, 67(1):36--50, 1949.

\bibitem{BMV15}
D.~El-Baz, J.~Marklof, and I.~Vinogradov.
\newblock The two-point correlation function of the fractional parts of
  {$\sqrt{n}$} is {P}oisson.
\newblock {\em Proc. Amer. Math. Soc.}, 143(7):2815--2828, 2015.

\bibitem{EM04}
N.~Elkies and C.~McMullen.
\newblock Gaps in $\sqrt{n}$ mod 1 and ergodic theory.
\newblock {\em Duke Math. J.}, 123(1):95--139, 2004.

\bibitem{EK49}
P.~Erd\H{o}s and J.~Koksma.
\newblock On the uniform distribution modulo 1 of sequences $(f(n,
  \vartheta))$.
\newblock {\em Nederl. Akad. Wetensch., Proc. 52}, pages 851--854, 1949.

\bibitem{EMM05}
A.~Eskin, G.~Margulis, and S.~Mozes.
\newblock Quadratic forms of signature (2, 2) and eigenvalue spacings on
  rectangular 2-tori.
\newblock {\em Ann. of Math.}, 161(2):679--725, 2005.

\bibitem{Ha98}
G.~Harman.
\newblock {\em Metric Number Theory, London Mathematical Society Monographs}.
\newblock Clarendon Press (Oxford), 1998.

\bibitem{HB10}
D.~R. Heath-Brown.
\newblock Pair correlation for fractional parts of $\alpha n^2 $.
\newblock {\em Math. Proc. Cambridge Phil. Soc.}, 148(3):385--407, 2010.

\bibitem{Iw04}
H.~Iwaniec and E.~Kowalski.
\newblock {\em Analytic number theory}, volume~53.
\newblock American Mathematical Soc., 2004.

\bibitem{K24}
A.~Khintchine.
\newblock {\"{U}}ber einen {S}atz der {W}ahrscheinlichkeitsrechnung.
\newblock {\em Fund. Math.}, 6(1):9--20, 1924.

\bibitem{KN12}
L.~Kuipers and H.~Niederreiter.
\newblock {\em Uniform distribution of sequences}.
\newblock Courier Corporation, 2012.

\bibitem{Lu20}
C.~Lutsko.
\newblock Long-range correlations of sequence modulo 1.
\newblock \emph{arXiv:2007.09292}.

\bibitem{M01}
J.~Marklof.
\newblock The {B}erry-{T}abor conjecture.
\newblock In {\em Europ. Congr. Math.}, pages 421--427. Springer, 2001.

\bibitem{M03}
J.~Marklof.
\newblock Pair correlation densities of inhomogeneous quadratic forms.
\newblock {\em Ann. of Math.}, pages 419--471, 2003.

\bibitem{M94}
H.~Montgomery.
\newblock {\em Ten lectures on the interface between analytic number theory and
  harmonic analysis}.
\newblock Number~84. American Mathematical Soc., 1994.

\bibitem{R08}
Z.~Rudnick.
\newblock Quantum chaos?
\newblock {\em Notices of the AMS}, 55(1):32--34, 2008.

\bibitem{Ru18}
Z.~Rudnick.
\newblock A metric theory of minimal gaps.
\newblock {\em Mathematika}, 64(3):628--636, 2018.

\bibitem{KR99}
Z.~Rudnick and P.~Kurlberg.
\newblock The distribution of spacings between quadratic residues.
\newblock {\em Duke J. Math.}, 100:211--242, 1999.

\bibitem{RS98}
Z.~Rudnick and P.~Sarnak.
\newblock The pair correlation function of fractional parts of polynomials.
\newblock {\em Comm. Math. Phy.}, 194(1):61--70, 1998.

\bibitem{RSZ01}
Z.~Rudnick, P.~Sarnak, and A.~Zaharescu.
\newblock The distribution of spacings between the fractional parts of $n^2
  \alpha$.
\newblock {\em Invent. Math.}, 145(1):37--57, 2001.

\bibitem{RZ02}
Z.~Rudnick and A.~Zaharescu.
\newblock The distribution of spacings between fractional parts of lacunary
  sequences.
\newblock {\em Forum Math.}, 14(5):691--712, 2002.

\bibitem{Sa96}
P.~Sarnak.
\newblock Values at integers of binary quadratic forms, harmonic analysis and
  number theory ({M}ontreal, pq, 1996), 181-203.
\newblock In {\em CMS Conf. Proc}, volume~21.

\bibitem{S18}
S.~Steinerberger.
\newblock Poissonian pair correlation and discrepancy.
\newblock {\em Indag. Math.}, 29(5):1167--1178, 2018.

\bibitem{WS19}
C.~Wei{\ss} and T.~Skill.
\newblock Sequences with almost poissonian pair correlations.
\newblock \emph{arXiv:1905.02760}.

\bibitem{W16}
H.~Weyl.
\newblock {\"U}ber die {g}leichverteilung von {z}ahlen mod. {e}ins.
\newblock {\em Math. Ann.}, 77(3):313--352, 1916.

\bibitem{Za03}
Alexandru Zaharescu.
\newblock Correlation of fractional parts of {$n^2\alpha$}.
\newblock {\em Forum Math.}, 15(1):1--21, 2003.

\end{thebibliography}
\end{document}